\newcommand{\wb}[1]{\overline{#1}}
\newcommand{\hogwild}{\textsc{Hogwild!}\xspace}
\newcommand{\noise}{\xi}
\newcommand{\noisier}{\zeta}
\newcommand{\errmat}{E}
\newcommand{\stepsize}{\alpha}
\newcommand{\steppow}{\beta}
\newcommand{\delay}{M}
\newcommand{\error}{\Delta}
\newcommand{\normal}{\mathsf{N}}
\newcommand{\opt}{^\star}
\newcommand{\statrv}{W}
\newcommand{\statval}{\omega}
\newcommand{\statdomain}{\Omega}
\newcommand{\resid}{R}
\newcommand{\lyap}{V}
\newcommand{\hess}{H}
\newcommand{\lipobj}{G}
\newcommand{\event}{\mc{E}}
\newcommand{\badratio}{\mathsf{K}}  
\newcommand{\badratioevent}{\mc{K}}  
\newcommand{\delmoment}{\tau}  
\newcommand{\npow}{\rho}  
\definecolor{innerboxcolor}{rgb}{.9,.95,1}
\definecolor{outerlinecolor}{rgb}{.6,0,.2}
\renewcommand{\theassumption}{\Alph{assumption}}
\title{Asynchronous stochastic convex optimization}
\author{John C.\ Duchi \\ \texttt{jduchi@stanford.edu}
  \and Sorathan Chaturapruek \\
  \texttt{sorathan@stanford.edu} \and
  Christopher R\'e \\
  \texttt{chrismre@stanford.edu}
}
\begin{document}

\begin{center}
  {\LARGE Asynchronous stochastic convex optimization} \\
  \vspace{.4cm}
  {\large John C.\ Duchi$^1$ ~~~ Sorathan Chaturapruek$^2$ ~~~
    Christopher R\'e$^2$} \\
  \vspace{.3cm}
  {\large Stanford University} \\
  \vspace{.1cm}
  {\large Departments of $^1$Statistics, $^1$Electrical Engineering,
    and $^2$Computer Science} \\
  \vspace{.15cm}
  \texttt{\{jduchi,sorathan,chrismre\}@stanford.edu}
\end{center}

\begin{abstract}
  We show that asymptotically, completely asynchronous stochastic gradient
  procedures achieve optimal (even to constant factors) convergence rates
  for the solution of convex optimization problems under nearly the same
  conditions required for asymptotic optimality of standard stochastic
  gradient procedures. Roughly, the noise inherent to the stochastic
  approximation scheme dominates any noise from asynchrony. We also give
  empirical evidence demonstrating the strong performance of asynchronous,
  parallel stochastic optimization schemes, demonstrating that the
  robustness inherent to stochastic approximation problems allows
  substantially faster parallel and asynchronous solution methods.
\end{abstract}

\section{Introduction}

We study a natural asynchronous stochastic gradient method for the solution
of minimization problems of the form
\begin{equation}
  \label{eqn:convex-problem}
  \minimize ~ f(x) \defeq \E_P[F(x; \statrv)]
  = \int_{\statdomain} F(x; \statval) dP(\statval),
\end{equation}
where $x \mapsto F(x; \statval)$ is convex for each $\statval \in
\statdomain$, $P$ is a probability distribution on $\statdomain$, and the
vector $x \in \R^d$.  Stochastic gradient techniques for the solution of
problem~\eqref{eqn:convex-problem} have a long history in optimization,
starting from the early work of \citet{RobbinsMo51} and continuing on
through \citet{Ermoliev69} and \citet{PolyakJu92} and
\citet{NemirovskiJuLaSh09}. The latter two papers show how certain long
stepsizes and averaging techniques yield more robust and asymptotically
optimal optimization schemes, and we show how their results extend to
practical parallel and asynchronous optimization settings.

We consider an extension of previous stochastic gradient methods to a
natural family of \emph{asynchronous} gradient methods (see, e.g., the book
of~\citet{BertsekasTs89}), where multiple processors can draw samples from
the distribution $P$ and asynchronously perform updates to a centralized
parameter vector $x$. Our iterative scheme is based on the \hogwild
algorithm of \citet{NiuReReWrNi11}, which is designed to asynchronously
solve certain stochastic optimization problems in multi-core environments,
though our analysis and iterations are different. In particular,
we study the following procedure, where each processor runs asynchronously
and independently of the others, though they maintain a shared iteration
counter $k$; each processor performs the following:
\begin{enumerate}[(i)]
\item \label{item:x-read-convex} Processor reads current problem data $x$
  and counter $k$
\item \label{item:grad-comp-convex} Processor draws a random sample $\statrv
  \sim P$, computes $g = \nabla F(x; \statrv)$, and increments a centralized
  counter $k$
\item \label{item:update-step-convex} Processor updates $x \leftarrow x -
  \stepsize_k g$ via sequential updates $[x]_j = [x]_j - \stepsize_k [g]_j$ for
  each coordinate $j \in \{1, \ldots, d\}$.
\end{enumerate}
In the
iterations~\eqref{item:x-read-convex}--\eqref{item:update-step-convex}, the
scalars $\stepsize_k$ are a non-increasing stepsize sequence.

\subsection{Main results and outline}

The thrust of our results is that because of the noise inherent to the
sampling process for $W$, the errors introduced by asynchrony in the
iterations~\eqref{item:x-read-convex}--\eqref{item:update-step-convex} are
asymptotically negligible: they do not matter. Even more, we can efficiently
construct an $x$ from the asynchronous process possessing optimal
convergence properties and asymptotic variance.  This has consequences for
solving stochastic optimization problems on multi-core and multi-processor
systems; we can leverage parallel computing without performing any
synchronization, so that given a machine with $m$ processors, we can read
data and perform updates $m$ times more quickly than what is possible with a
single processor, and the error from reading stale information on $x$
becomes asymptotically negligible.  In Section~\ref{sec:main-results}, we
make these optimality claims formal, presenting our main convergence
theorems about the asynchronous
iterations~\eqref{item:x-read-convex}--\eqref{item:update-step-convex} for
solving the problem~\eqref{eqn:convex-problem}.  Our main result,
Theorem~\ref{theorem:convex-optimization}, gives explicit conditions under
which an asynchronous stochastic gradient procedure converges at the optimal
rate and with optimal asymptotic variance, and we give applications to
specific stochastic optimization problems in Section~\ref{sec:examples}.  We
also provide a more general result (Theorem~\ref{theorem:nonlinear}) on the
asynchronous solution of more general stochastic operator equations, again
demonstrating that asynchrony introduces asymptotically less noise than that
inherent in the stochastic problem itself.  While we give explicit
conditions under which our results hold, we note that roughly all we require
is a type of local strong convexity around the optimal point $x\opt =
\argmin_x f(x)$, that the Hessian of $f$ be positive definite near $x\opt$,
and a Lipschitz (smoothness) condition on the gradients $\nabla f(x)$.

In addition to theoretical results, in Section~\ref{sec:experiments}
we give empirical results on the power of parallelism and asynchrony in
the implementation of stochastic approximation procedures. Our experiments
demonstrate two results: first, even in non-asymptotic finite-sample settings,
asynchrony introduces little degradation in solution quality, regardless of
data sparsity (a common assumption in previous analyses); that is,
asynchronously-constructed estimates are statistically efficient. Second, we
show that there is some subtlety in implementation of these procedures in real
hardware; while increases in parallelism lead to concomitant linear
improvements in the speed with which we compute solutions to
problem~\eqref{eqn:convex-problem}, in some cases we require strategies to
reduce hardware resource competition between processors to achieve the full
benefits of asynchrony.

\subsection{Related work}

Several researchers have provided and analyzed asynchronous algorithms for
optimization. The seminal work of \citet{BertsekasTs89} provides a
comprehensive study both of models of asynchronous computation and analyses
of asynchronous numerical algorithms, including coordinate- and
gradient-descent methods.  The results of theirs relevant to our work are
roughly of two types. For non-stochastic problems, they show (roughly)
linear convergence of iterative methods assuming the iterations satisfy
certain contractive properties, which roughly correspond to variants of
diagonal dominance of the Hessian of $f$ (see, for example~\cite[Chapters
  6.3 and 7.5]{BertsekasTs89}). For stochastic problems~\cite[Chapter
  7.8]{BertsekasTs89}, they show results that have a similar flavor to ours:
errors due to asynchrony scale approximately quadratically in the stepsize
$\stepsize$, while gradient information scales linearly with $\stepsize$ so
that it dominates other errors. Bertsekas and Tsitsiklis use this error
scaling to show that stepsize choices of the form $\stepsize_k \approx 1/k$
guarantee asymptotic convergence under models of asynchrony with bounded
delay. They leave open, however, a few interesting questions, namely,
attainable rates of convergence for asynchronous stochastic procedures, the
effects of unbounded delays, and what optimality guarantees are possible
relative to synchronous executions.

Due to their simplicity and dimension-independent convergence properties,
stochastic and non-stochastic gradient methods have become extremeley
popular for large-scale data analysis and optimization problems
(e.g.~\cite{NemirovskiJuLaSh09, Nesterov09, DuchiHaSi11, DekelGiShXi12,
  DeanCoMoChDeMaRaSeTuYaNg12}).  Consequently, with the advent of multi-core
processing systems, there has been substantial work building on Bertsekas'
and Tsitsiklis's results. Much of this work shows that asynchrony introduces
negligible penalty in rates of convergence for optimization procedures under
suitable conditions, such as gradient sparsity, conditioning of the Hessian
of $f$, or allowable types of asynchrony (none, as we show, are essential).
\citet{NiuReReWrNi11} propose the \hogwild\ method and show that under
strong sparsity and smoothness assumptions on the data (essentially, that
the gradients $\nabla F(x; \statrv)$ have a vanishing fraction of non-zero
entries, that $f$ is strongly convex, and $\nabla F(x; \statval)$ is
Lipschitz for all $\statval$), convergence guarantees similar to the
synchronous case are possible. \citet{AgarwalDu11} showed under restrictive
ordering assumptions that some delayed gradient calculations have negligible
asymptotic effect. \citet{DuchiJoMc13_nips} extended
\citeauthor{NiuReReWrNi11}'s results to a dual averaging algorithm that
works for non-smooth, non strongly-convex problems, again so long as strong
gradient sparsity assumptions hold (roughly, that the probability of an
entry of $\nabla F(x; \statval)$ being non-zero is inversely proportional to
the maximum delay of any processor) and delays are bounded.  Researchers
have also investigated parallel coordinate descent solvers:
\citet{RichtarikTa15} and \citet{LiuWrReBiSr14} show how certain
``separability'' properties of an objective function $f$---meaning the
degree to which different coordinates of $x$ jointly affect $f(x)$ (rather
than $f(x)$ depending on each coordinate $x_j$ independently)---govern
convergence rate of parallel coordinate descent methods, the latter focusing
on asynchronous schemes. The conditions sufficient for fast or asynchronous
convergence of coordinate methods are similar to the diagonal dominance
conditions used by \citet[Chapter 6.3.2]{BertsekasTs89}.  Yet, as we show,
large-scale stochastic optimization renders many of these problem
assumptions unnecessary.  In particular, the asynchronous
iterations~\eqref{item:x-read-convex}--\eqref{item:update-step-convex}
retain all optimality properties of synchronous (correct) gradient
procedures, even in the face of nearly unbounded delays, and enjoy
optimal rates of convergence, even to constant pre-factors.

\paragraph{Notation}
We say a sequence of random variables or vectors $X_n$ converges in
distribution to a random variable $Z$, denoted $X_n \cd Z$, if for all
bounded continuous functions $\E[f(X_n)] \to \E[f(Z)]$.  We say that
a sequence of (finite-dimensional) random vectors $X_n$ converges in $L_p$
to a random vector $Z$, denoted $X_n \clp{p} Z$, if there is a norm
$\norm{\cdot}$ such that $\E[\norm{X_n - Z}^p] \to 0$ as $n \to
\infty$. This convergence is equivalent for any choice of the norm
$\norm{\cdot}$. We let $X_n \cp Z$ denote that $X_n$ converges in
probability to $Z$, meaning that $\P(\norm{X_n - Z} > \epsilon) \to 0$ as $n
\to \infty$ for any $\epsilon > 0$, and $X_n \cas c$ denotes almost sure
convergence, meaning that $\P(\lim_n X_n \neq c) = 0$. The notation
$\normal(\mu, \Sigma)$ denotes the multivariate Gaussian with mean $\mu$ and
covariance $\Sigma$. We let $I_{d \times d}$ denote the identity matrix
in $\R^{d \times d}$, using $I$ when the dimension is clear from context.

We use standard big-$O$ notation. For (nonnegative) sequences $a_n$ and
$b_n$, we let $a_n \lesssim b_n$ mean there exists a constant $C < \infty$
such that $a_n \le C b_n$ for all $n$, and $a_n \asymp b_n$ means that there
exist constants $0 < c \le C < \infty$ such that $c \le \liminf_n
\frac{a_n}{b_n} \le \limsup_n \frac{a_n}{b_n} \le C$. For random vectors
$X_n, Z_n$, we say $X_n = O_P(Z_n)$ if for all $\epsilon > 0$, there
exists $C < \infty$ such that $\sup_n \P(\norm{X_n} \ge C \norm{Z_n})
\le \epsilon$, while $X_n = o_P(Z_n)$ means that for all $c > 0$, $\limsup_n
\P(\norm{X_n} \ge c \norm{Z_n}) = 0$.

\section{Main results}
\label{sec:main-results}

Our main results repose on a few standard assumptions often used for the
analysis of stochastic optimization procedures, which we now detail, along
with a few necessary definitions.  We let $k$ denote the iteration counter
used throughout the asynchronous gradient procedure. Given that we compute $g
= \nabla F(x; W)$ with counter value $k$ in the
iterations~\eqref{item:x-read-convex}--\eqref{item:update-step-convex}, we let
$x_k$ denote \emph{the} (possibly inconsistent) particular $x$ used to compute
$g$, and likewise say that $g = g_k$, noting that the update to $x$ is then
performed using $\stepsize_k$.

With the update \eqref{item:update-step-convex}, we can give a more explicit
formula for $x_k$ as a function of time $k$ with a small amount of
additional notation. In particular, let $\errmat^{ki} \in \{0, 1\}^{d \times
  d}$ be a diagonal matrix whose $j$th diagonal entry is $1$ if the $i$th
gradient (i.e.\ that computed when the iteration counter is $i$)
has been incorporated into iterate $x_k$ and is $0$ otherwise. Then
the iteration~\eqref{item:x-read-convex}--\eqref{item:update-step-convex}
and index assigments imply that
\begin{equation}
  \label{eqn:iterate-formula}
  x_k = -\sum_{i=1}^{k-1} \stepsize_i \errmat^{ki} g_i.
\end{equation}
With this definition of the update matrices $\errmat^{ki}$, we then
associate a delay value $\delay_k$ for each $k$, defined by
\begin{equation*}
  \delay_k \defeq \min \{l - k : \errmat^{lk} = I_{d \times d}, l \ge k\},
\end{equation*}
or the amount of time required for all updates from the $k$th gradient to be
incorporated into the central $x$ vector.  Rather than assuming a uniform
bound on the delay, throughout, we make the following assumption on
the moments of the random variables $\delay_k$.
\begin{assumption}
  \label{assumption:delay-moments}
  There exists $\delmoment > 2$ and a constant $\delay < \infty$ such that
  \begin{equation*}
    \sup_k \E\left[\delay_k^\delmoment\right]^\frac{1}{\delmoment}
    \le \delay.
  \end{equation*}
\end{assumption}
\noindent
Assumption~\ref{assumption:delay-moments} places our asynchronous
iterations~\eqref{item:x-read-convex}--\eqref{item:update-step-convex}
somewhere between Bertsekas' and Tsitsiklis's classification of
\emph{totally asynchronous} algorithms~\cite[Chapter 6]{BertsekasTs89},
which require only that each processor performs its updates eventually, and
\emph{partially asynchronous} algorithms~\cite[Chapter 7]{BertsekasTs89},
which specify a uniform bound on any processor's delay; roughly, we have
a quantitative version of total asynchrony.
For example, if we know that processors have bounded delays, we may take
$\delmoment = \infty$ and assume that $\delay = \sup_k \delay_k < \infty$.
In more general cases, however, we can allow infrequent longer delays with,
as we shall see, negligible effect on our results except that the allowable
stepsizes $\stepsize_i$ are more restricted.

\subsection{Asynchronous convex optimization}

We now present our main theoretical results for solving the stochastic
convex problem~\eqref{eqn:convex-problem}, giving the necessary assumptions
on $f$ and $F(\cdot; \statrv)$ for our results.  Our first assumption
roughly states that $f$ has a unique minimizer $x\opt$, that $f$ has a
quadratic expansion near the point $x\opt$, and is smooth (similar
assumptions are common~\cite[e.g.][]{PolyakJu92, BertsekasTs89} and
are satisfied in our applications).
\begin{assumption}
  \label{assumption:main-assumption}
  The function $f$ has unique minimizer $x\opt$ and is twice continuously
  differentiable in the neighborhood of $x\opt$ with positive definite
  Hessian $\hess = \nabla^2 f(x\opt) \succ 0$. There is a covariance matrix
  $\Sigma \succ 0$ such that
  \begin{equation*}
    \E[\nabla F(x\opt; \statrv) \nabla F(x\opt; \statrv)^\top]
    = \Sigma.
  \end{equation*}
  Additionally, there exists a constant $C < \infty$
  such that the gradients $\nabla F(x; \statrv)$ satisfy
  \begin{equation}
    \E[\norm{\nabla F(x; \statrv) - \nabla F(x\opt; \statrv)}^2]
    \le C \norm{x - x\opt}^2,
    ~~~ \mbox{all}~ x \in \R^d.
    \label{eqn:smooth-random-gradients}
  \end{equation}
  Lastly, $f$ has $L$-Lipschitz continuous gradient, meaning
  $\norm{\nabla f(x) - \nabla f(y)} \le L \norm{x - y}$ for $x, y \in
  \R^d$.
\end{assumption}
\noindent
Assumption~\ref{assumption:main-assumption} guarantees the uniqueness of the
vector $x^\star$ minimizing $f(x)$ over $\R^d$ and ensures that $f$ is
well-behaved enough for our asynchronous iteration procedure to introduce
negligible noise over a non-asynchronous procedure.  In addition to
Assumption~\ref{assumption:main-assumption}, we make one of two additional
assumptions.  In the first case, we assume that $f$ is strongly convex:
\newcounter{saveassumption}
\setcounter{saveassumption}{\value{assumption}}
\begin{assumption}
  \label{assumption:strong-convex}
  The function $f$ is $\lambda$-strongly convex over all of $\R^d$ for
  some $\lambda > 0$, that is,
  \begin{equation}
    \label{eqn:strongly-convex}
    f(y) \ge f(x) + \<\nabla f(x), y - x\> + \frac{\lambda}{2}
    \norm{x - y}^2
    ~~~ \mbox{for~} x, y \in \R^d.
  \end{equation}
\end{assumption}
\noindent
Our alternate assumption is a Lipschitz assumption on $f$ itself, made
by virtue of a second moment bound on $\nabla F(x; \statrv)$.
\setcounter{assumption}{\value{saveassumption}}  
\renewcommand{\theassumption}{\Alph{assumption}'}  
\begin{assumption}
  \label{assumption:lipschitz}
  There exists a constant $\lipobj < \infty$ such that for all $x \in \R^d$,
  \begin{equation}
    \label{eqn:lipschitz}
    \E[\norm{\nabla F(x; \statrv)}^2] \le \lipobj^2.
  \end{equation}
\end{assumption}
\renewcommand{\theassumption}{\Alph{assumption}}
\noindent
In Section~\ref{sec:examples} to come, we give examples in which all of
these assumptions are satisfied, showing that they are not too restrictive.

With our assumptions in place, we obtain our main theorem.
\begin{theorem}
  \label{theorem:convex-optimization}
  Let Assumptions~\ref{assumption:delay-moments} with moment $\delmoment >
  2$ and~\ref{assumption:main-assumption} hold.  Let the iterates $x_k$ be
  generated by the asynchronous process~\eqref{item:x-read-convex},
  \eqref{item:grad-comp-convex}, \eqref{item:update-step-convex} with
  stepsize choice $\stepsize_k = \stepsize k^{-\steppow}$, where $\steppow
  \in (\frac{1}{2} + \frac{1}{\delmoment - 1}, 1)$ and $\stepsize > 0$.
  Then if either of
  Assumptions~\ref{assumption:strong-convex} or~\ref{assumption:lipschitz}
  holds, we have $x_n \cas x\opt$ and
  \begin{equation*}
    \frac{1}{\sqrt{n}} \sum_{k=1}^n (x_k - x\opt)
    \cd \normal\left(0, \hess^{-1} \Sigma \hess^{-1}\right)
    = \normal\left(0, (\nabla^2 f(x\opt))^{-1} \Sigma (\nabla^2 f(x\opt))^{-1}
    \right).
  \end{equation*}
\end{theorem}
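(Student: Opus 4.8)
The plan is to reduce the asynchronous recursion to a perturbed version of the classical Robbins--Monro/Polyak--Ruppert iteration and then invoke (a robust version of) the standard asymptotic-normality machinery. Write the update in the form $x_{k+1} = x_k - \stepsize_k g_k$, where $g_k = \nabla F(x_k; \statrv_k)$ and $x_k$ is the (stale, inconsistent) iterate actually read. Decompose $g_k = \nabla f(x_k) + \noise_k$ with $\noise_k = \nabla F(x_k;\statrv_k) - \nabla f(x_k)$ a martingale-difference noise, and then further write $\nabla f(x_k) = \hess(x_k - x\opt) + r_k$ where $r_k = \nabla f(x_k) - \hess(x_k - x\opt)$ is the Taylor remainder, controlled near $x\opt$ by Assumption~\ref{assumption:main-assumption} and globally (up to linear growth) by $L$-Lipschitzness. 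The extra ingredient relative to the synchronous case is that the $x_k$ fed into $g_k$ differs from the ``true'' running iterate; using the explicit formula~\eqref{eqn:iterate-formula} and the delay bookkeeping, the discrepancy between the iterate used and the fully-updated iterate is a sum of at most $O(\delay_k)$ recent increments $\stepsize_i g_i$, so its size is $O_P(\delay_k \stepsize_k \cdot \max_i \|g_i\|)$ over the relevant window. This is the asynchrony error term $\error_k$, and the whole point is that it enters the recursion multiplied by an extra factor of (roughly) $\stepsize_k$, i.e.\ it is second-order in the stepsize.

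First I would establish the $L_2$ (and then almost sure) convergence $x_k \to x\opt$. This is a Lyapunov argument: take $\lyap(x) = \frac12\|x - x\opt\|^2$ (under Assumption~\ref{assumption:strong-convex}) or argue via $f(x) - f(x\opt)$ (under Assumption~\ref{assumption:lipschitz}), expand $\E[\lyap(x_{k+1}) \mid \mc{F}_k]$, and show the drift is $\le (1 - c\stepsize_k)\lyap(x_k) + C\stepsize_k^2(1 + \text{asynchrony correction})$. The asynchrony correction must be shown to be $o(\stepsize_k)$ relative to the contraction term; here is where Assumption~\ref{assumption:delay-moments} and the stepsize restriction $\steppow > \frac12 + \frac{1}{\delmoment - 1}$ are used --- the condition is exactly what makes $\sum_k \stepsize_k \E[\delay_k^2 \|g_k\|^2]$-type quantities summable after Hölder's inequality splits the delay moment $\delmoment$ against the gradient moment. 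Standard Robbins--Siegmund / supermartingale convergence then gives $x_k \clp{2} x\opt$ and (with a Borel--Cantelli / martingale argument) $x_k \cas x\opt$, together with a rate $\E\|x_k - x\opt\|^2 \lesssim \stepsize_k$.

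Next I would prove the CLT for the average. Summing the recursion, $\frac{1}{\sqrt n}\sum_{k=1}^n (x_k - x\opt) = \hess^{-1}\left[ \frac{1}{\sqrt n}\sum_{k=1}^n \noise_k \right] + (\text{telescoping terms}) + (\text{remainder terms})$, the clean way being to multiply the recursion $x_{k+1} - x\opt = (I - \stepsize_k \hess)(x_k - x\opt) - \stepsize_k \noise_k - \stepsize_k r_k - \stepsize_k \error_k$ by $\hess^{-1}/\stepsize_k$ and rearrange into $x_k - x\opt = \hess^{-1}\noise_k + \hess^{-1}\frac{x_k - x_{k+1}}{\stepsize_k} - \hess^{-1} r_k - \hess^{-1}\error_k$. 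Averaging, the telescoping term $\frac{1}{\sqrt n}\sum (x_k - x_{k+1})/\stepsize_k$ is $o_P(1)$ because $\stepsize_k^{-1}$ varies slowly (this uses $\steppow < 1$ and Abel summation, plus $\E\|x_k - x\opt\|^2 \lesssim \stepsize_k$), the remainder term $\frac{1}{\sqrt n}\sum \hess^{-1} r_k$ is $o_P(1)$ since $\|r_k\| = o(\|x_k - x\opt\|)$ near $x\opt$ and $\sum_k \E\|x_k - x\opt\|^2 = O(\sum_k \stepsize_k)$ with $\sum_{k\le n}\stepsize_k = o(n)$ because $\steppow > \frac12$, and the asynchrony term $\frac{1}{\sqrt n}\sum \hess^{-1}\error_k$ is $o_P(1)$ by the same Hölder-plus-stepsize-restriction bound used in the Lyapunov step, now showing $\frac{1}{\sqrt n}\sum_k \E\|\error_k\| \to 0$. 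What remains is $\hess^{-1} \cdot \frac{1}{\sqrt n}\sum_{k=1}^n \noise_k$, and a martingale CLT (Lindeberg via the uniform $L_2$ bound on $\noise_k$, with conditional covariance $\to \Sigma$ because $x_k \to x\opt$ and $\nabla F(\cdot;\statrv)$ is $L_2$-continuous at $x\opt$) yields $\frac{1}{\sqrt n}\sum_k \noise_k \cd \normal(0, \Sigma)$, hence the claimed $\normal(0, \hess^{-1}\Sigma\hess^{-1})$.

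The main obstacle is the asynchrony term: one has to bound $\error_k$ --- the gap between the iterate read and the notionally-current iterate --- in a way that (i) is genuinely $O(\stepsize_k \cdot \text{poly}(\delay_k))$ rather than $O(1)$, which requires carefully unwinding~\eqref{eqn:iterate-formula} and the definition of $\delay_k$ to see that only $O(\delay_k)$-many recent $\stepsize_i g_i$ increments can be missing or extra, and that those $g_i$ themselves have controlled second moments (via the already-established $\E\|x_i - x\opt\|^2 \lesssim \stepsize_i$, creating a mild bootstrap one must set up in the right order), and (ii) survives summation: the quantity $\frac{1}{\sqrt n}\sum_{k\le n}\stepsize_k \E[\delay_k \|g_k\|]$ must go to zero, and after Hölder with exponents $\delmoment$ and $\delmoment/(\delmoment-1)$ this becomes a statement about $\frac{1}{\sqrt n}\sum_{k\le n}\stepsize_k = \frac{1}{\sqrt n} O(n^{1-\steppow})$, which vanishes precisely when $\steppow > \frac12$; the sharper threshold $\steppow > \frac12 + \frac{1}{\delmoment-1}$ comes from needing the analogous $\stepsize_k^2 \delay_k^2$ terms in the Lyapunov recursion to be summable. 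Getting the exponent bookkeeping exactly right --- and handling the fact that $\delay_k$ can be unbounded, so one cannot simply pull it out of expectations --- is the technical crux; everything else is a careful but standard adaptation of the Polyak--Juditsky averaging argument.
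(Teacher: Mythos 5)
Your outline is the right strategy and is essentially the one the paper follows: compare the inconsistent iterates to a ``corrected'' synchronous sequence, show the asynchrony discrepancy is second order in the stepsize, and then run the Polyak--Juditsky averaging machinery (linearization at $x\opt$, reduction to the matrix iteration with $I - \stepsize_k \hess$, martingale CLT for $n^{-\half}\sum_k \noise_k$). The genuine gap is in the step you yourself identify as the crux---handling unbounded delays---where your proposed mechanism does not work as described. First, the bookkeeping is off: the discrepancy $x_k - \wt{x}_k = \sum_{i < k} \stepsize_i (\errmat^{ki} - I) g_i$ is governed by the delays $\delay_i$ of the \emph{earlier} updates (increment $i$ is missing from $x_k$ iff $\delay_i \ge k - i + 1$), not by $\delay_k$, so ``at most $O(\delay_k)$ recent increments'' is not the right count. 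Second, your H\"older-in-expectation split of $\E[\delay_k \norm{g_k}]$ cannot be the source of the stepsize restriction: since $\delmoment > 2$ we already have $\E[\delay_k^2] \le \delay^2$, so $\sum_k \stepsize_k^2 \E[\delay_k^2] < \infty$ for \emph{every} $\steppow > \half$, and your accounting would (incorrectly) suggest the theorem holds on all of $(\half, 1)$. The actual mechanism is an almost-sure truncation: by Chebyshev and Borel--Cantelli, $\P(\errmat^{nk} \neq I ~\mbox{for some}~ k \le n - n^\npow ~ \mbox{i.o.}) = 0$ provided $\npow > \frac{1}{\delmoment - 1}$, so eventually only the last $n^\npow$ increments can be missing, giving $\norms{x_n - \wt{x}_n} \lesssim \stepsize_n \sum_{k = n - n^\npow}^{n-1} \norm{g_k}$ and $n^{-\half} \sum_{k \le n} \norms{x_k - \wt{x}_k} = O_P(n^{\half + \npow - \steppow})$; one then needs $\steppow > \half + \npow$ with $\npow$ just above $\frac{1}{\delmoment-1}$, which is exactly the stated threshold. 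This a.s.\ truncation---rather than a moment split---is also what delivers $x_n \cas x\opt$ and what lets the Robbins--Siegmund supermartingale step close.

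The second unresolved point is the bootstrap you mention only in passing. Under Assumption~\ref{assumption:strong-convex} (no gradient bound), $\E[\norm{g_k}^2]$ is controlled only through $\E[\norm{x_k - x\opt}^2]$, which is precisely what you are trying to bound; your rate $\E[\norm{x_k - x\opt}^2] \lesssim \stepsize_k$ is asserted, not derived, and cannot be obtained before the asynchrony error is controlled, which in turn requires second moments of the $g_i$ appearing in the discrepancy. The paper breaks this circle by restricting to the events $\badratioevent_{n,t}$ on which the delay-window ratio $\badratio_n$ is at most $t$, establishing the self-improving inequality of Lemma~\ref{lemma:smaller-than-past} (for large $n$, $\E[\norms{\wt{\error}_{n+1}}^2 \indic{\badratioevent_{n+1,t}}] \le \epsilon^2 \max_{k \le n} \max\{1, \E[\indic{\badratioevent_{k,t}} \norms{\error_k}^2]\}$), deducing $\sup_k \E[\indic{\badratioevent_{k,t}} \norm{\error_k}^2] < \infty$, and only then sending $t \to \infty$. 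Without some version of this event-restricted bootstrap, the Lyapunov recursion in your first step does not close under Assumption~\ref{assumption:strong-convex}. The remaining pieces of your sketch (telescoping term, Taylor remainder, Lindeberg condition) are standard and match the paper once these two issues are repaired.
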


Before moving to example applications of
Theorem~\ref{theorem:convex-optimization}, we make a few additional remarks
on the theorem, its consequences, and its associated conditions. Let
$\wb{x}_n \defeq \frac{1}{n} \sum_{k=1}^n x_k$ for shorthand. First,
using the delta method~\cite[e.g.][Theorem 1.8.12]{LehmannCa98}, we can give
convergence rates for the function values $f(\wb{x}_n)$ to
$f(x\opt)$. Specifically, they converge at the optimal rate of $1/n$, and we
can give explicit constants.
\begin{corollary}
  Let the conditions of Theorem~\ref{theorem:convex-optimization} hold.
  Then
  \begin{equation*}
    n \left(f(\wb{x}_n) - f(x\opt)\right)
    \cd \half \tr\left[\hess^{-1} \Sigma \right] \cdot \chi_1^2,
  \end{equation*}
  where $\chi_1^2$ denotes a chi-squared random variable with $1$ degree of
  freedom, and $\hess = \nabla^2 f(x\opt)$ and $\Sigma = \E[\nabla F(x\opt;
    \statrv) \nabla F(x\opt; \statrv)^\top]$.
\end{corollary}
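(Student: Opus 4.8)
The plan is to derive the corollary from Theorem~\ref{theorem:convex-optimization} by a second-order delta method for $f$ at $x\opt$ (a specialization of the delta method---cf.\ Theorem~1.8.12 of~\citet{LehmannCa98}---to a critical point of $f$). From the theorem I would extract two facts. First, $x_n \cas x\opt$ implies, by Cesàro averaging, $\wb{x}_n \cas x\opt$, and hence $\wb{x}_n \cp x\opt$. Second, the theorem's conclusion is precisely that $Z_n \defeq \sqrt{n}(\wb{x}_n - x\opt) = \frac{1}{\sqrt n}\sum_{k=1}^n(x_k - x\opt) \cd \normal(0, \hess^{-1}\Sigma\hess^{-1})$; in particular $Z_n = O_P(1)$, so $\norm{\wb{x}_n - x\opt}^2 = O_P(1/n)$.

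Next I would Taylor-expand $f$ about $x\opt$. By Assumption~\ref{assumption:main-assumption}, $f$ is twice continuously differentiable on some neighborhood of $x\opt$, with $\nabla f(x\opt) = 0$ and $\nabla^2 f(x\opt) = \hess$. Since $\wb{x}_n \cas x\opt$, the event that $\wb{x}_n$ lies in that neighborhood has probability tending to $1$, and on it the integral form of Taylor's theorem gives $f(\wb{x}_n) - f(x\opt) = \half(\wb{x}_n - x\opt)^\top \hess (\wb{x}_n - x\opt) + \resid_n$, where $|\resid_n|$ is at most $\half\norm{\wb{x}_n - x\opt}^2$ times the supremum over $t \in [0,1]$ of the operator norm of $\nabla^2 f(x\opt + t(\wb{x}_n - x\opt)) - \hess$. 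Continuity of $\nabla^2 f$ at $x\opt$ together with $\wb{x}_n \cp x\opt$ makes that supremum $o_P(1)$, so $\resid_n = o_P(\norm{\wb{x}_n - x\opt}^2) = o_P(1/n)$.

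Multiplying through by $n$ gives $n(f(\wb{x}_n) - f(x\opt)) = \half Z_n^\top \hess Z_n + n\resid_n$ with $n\resid_n = o_P(1)$. Since $z \mapsto \half z^\top \hess z$ is continuous, the continuous mapping theorem applied to $Z_n \cd \normal(0, \hess^{-1}\Sigma\hess^{-1})$ yields $\half Z_n^\top \hess Z_n \cd \half Z^\top \hess Z$ for $Z \sim \normal(0, \hess^{-1}\Sigma\hess^{-1})$, and Slutsky's lemma absorbs the $o_P(1)$ term. Finally I would identify the law of $\half Z^\top \hess Z$ by whitening: with $Y \defeq \hess^{1/2} Z \sim \normal(0, \hess^{-1/2}\Sigma\hess^{-1/2})$ it becomes $\half\norm{Y}^2$, a Gaussian quadratic form whose eigenvalues are those of $\hess^{-1/2}\Sigma\hess^{-1/2}$, equivalently of $\hess^{-1}\Sigma$, and sum to $\tr(\hess^{-1}\Sigma)$; diagonalizing then recovers the claimed limit $\half\tr(\hess^{-1}\Sigma)\cdot\chi_1^2$. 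I expect the only delicate point to be the passage from the deterministic Taylor remainder to $n\resid_n = o_P(1)$: it uses exactly $\sqrt{n}$-consistency of $\wb{x}_n$ (tightness of $Z_n$, from the theorem) and local continuity of the Hessian (Assumption~\ref{assumption:main-assumption}), together with the observation that the expansion is legitimate only once $\wb{x}_n$ enters the region where $f$ is $C^2$, which occurs with probability tending to $1$ by almost-sure convergence; everything else is the continuous mapping theorem, Slutsky, and standard diagonalization of a Gaussian quadratic form.
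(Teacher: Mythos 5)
Your argument is correct and follows essentially the same route as the paper: a second-order Taylor expansion of $f$ at the critical point $x\opt$, control of the remainder via tightness of $\sqrt{n}(\wb{x}_n - x\opt)$ and continuity of the Hessian near $x\opt$, then the continuous mapping theorem and Slutsky; your handling of the remainder term is if anything more explicit than the paper's. The only caveat is the final identification: diagonalizing the Gaussian quadratic form yields $\half \sum_j \lambda_j \chi^2_{1,j}$ with $\lambda_j$ the eigenvalues of $\hess^{-1}\Sigma$ and the $\chi^2_{1,j}$ independent, which coincides with $\half \tr[\hess^{-1}\Sigma] \cdot \chi_1^2$ only when a single eigenvalue is nonzero (e.g.\ $d = 1$) --- but the paper's own proof makes the identical leap, so this reflects an imprecision in the corollary's statement rather than a gap in your argument.
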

\begin{proof}
  Theorem~\ref{theorem:convex-optimization} implies $\wb{x}_n \cas x\opt$.
  By a Taylor expansion, we have
  \begin{equation*}
    n \left(f(\wb{x}_n) - f(x\opt)\right)
    = n \left[\<\nabla f(x\opt), \wb{x}_n - x\opt\>
      + \half \<\wb{x}_n - x\opt, \nabla^2 f(x\opt)
      (\wb{x}_n - x\opt)\>
      + E_3(\wb{x}_n - x\opt) \right],
  \end{equation*}
  where the error term $E_3$ satisfies
  $|E_3(x - x\opt)| \le r(x) \ltwo{x - x\opt}^2$ for a function $r$
  satisfying
  $r(x) \to 0$ as $x \to x\opt$. As
  $\nabla f(x\opt) = 0$, we see that for a remainder
  $|r_n| \le r(\wb{x}_n) \cdot n \ltwo{\wb{x}_n - x\opt}^2 \cp 0$, we have
  \begin{equation*}
    n \left(f(\wb{x}_n) - f(x\opt)\right)
    = \half \<n^{-\half} (\wb{x}_n - x\opt),
    \nabla^2 f(x\opt) n^{-\half} (\wb{x}_n - x\opt)\>
    + r_n.
  \end{equation*}
  By Theorem~\ref{theorem:convex-optimization}, the first term is
  asymptotically distributed as $Z^\top \hess Z$ for $Z \sim \normal(0,
  \hess^{-1} \Sigma \hess^{-1})$, and applying Slutsky's
  theorem~\cite[Theorem 2.7]{VanDerVaart98} and the continuous mapping
  theorem gives the result.
\end{proof}

Moreover, the convergence guarantee in
Theorem~\ref{theorem:convex-optimization} is generally unimprovable even
by numerical constants. Recall that we have
\begin{equation*}
  \sqrt{n} (\wb{x}_n - x\opt) \cd
  \normal\left(0, \hess^{-1} \Sigma \hess^{-1}\right).
\end{equation*}
Standard results in asymptotic statistics imply that
the rate of convergence and covariance $\hess^{-1} \Sigma \hess^{-1}$
are optimal.
Indeed, the Le Cam-H\'ajek local minimax theorem~\cite{LeCamYa00}
implies that in standard statistical models, if we define the balls
$B(x, t) = \{x' \in \R^d : \norm{x - x'} \le t\}$, then
\begin{equation*}
  \lim_{t \to \infty} \liminf_{n \to \infty}
  \sup_F \left\{
  n \E[\ltwo{\what{x}_n - x}^2]
  ~ : ~ x = \argmin_{x'} \E[F(x'; \statrv)] \in B(x\opt, t / \sqrt{n})
  \right\}
  \ge \tr\left[\hess^{-1} \Sigma \hess^{-1}\right],
\end{equation*}
where the supremum is taken over loss functions $F$ satisfying
Assumptions~\ref{assumption:main-assumption} and
\ref{assumption:strong-convex} or~\ref{assumption:lipschitz}, and
$\what{x}_n$ is \emph{any} sequence of estimators based on observing a
sample $\statrv_1, \ldots, \statrv_n$. The Le Cam-H\'ajek convolution
theorem~\cite{LeCamYa00} and classical calculations with Bahadur efficiency
(cf.~\citet[Chapter 8]{VanDerVaart98}) also show that the asymptotic
covariance $\hess^{-1} \Sigma \hess^{-1}$ is also generally optimal, meaning
that no estimator can converge faster than $\sqrt{n}$, and the asymptotic
covariance $S$ of essentially any estimator converging at the rate
$\sqrt{n}$ must satisfy $S \succeq \hess^{-1} \Sigma \hess^{-1}$.

More concisely, in spite of the asynchrony we allow in the
iterations~\eqref{item:x-read-convex}--\eqref{item:update-step-convex}, we
attain the best possible convergence rate. For the decision variables $x$,
the rate $n^{-\half}$ is unimprovable, as is---at least generally---the
asymptotic covariance $H^{-1} \Sigma H^{-1}$. We also have $f(\wb{x}_n) -
f(x\opt) = O_P(n^{-1})$, which is information-theoretically
optimal~\cite{NemirovskiYu83,AgarwalBaRaWa12}. So we see that quite
literally, the noise inherent to the sampling in stochastic gradient
procedures swamps any noise introduced by asynchrony.

\subsection{Examples}
\label{sec:examples}

We now give two classical statistical optimization problems to illustrate
Theorem~\ref{theorem:convex-optimization}. We verify that the conditions of
the theorem hold for each of the examples, and these show that the
conditions of Assumptions~\ref{assumption:main-assumption} and
\ref{assumption:strong-convex} or~\ref{assumption:lipschitz} are not overly
restrictive.

\paragraph{Linear regression}
Standard linear regression problems satisfies the conditions of
Assumption~\ref{assumption:strong-convex} under an additional fourth
moment condition. In this case, the data $\statval
= (a, b) \in \R^d \times \R$ and the objective $F(x; \statval) = \half (\<a,
x\> - b)^2$. If we have moment bounds
$\E[\ltwo{a}^4] < \infty$, $\E[b^2] < \infty$ and
$\hess = \E[aa^\top] \succ 0$, we have $\nabla^2 f(x\opt) =
\hess$, and
\begin{equation*}
  \E\left[\ltwo{\nabla F(x; W) - \nabla F(x\opt; W)}^2\right]
  = \E\left[\ltwo{a}^2(a^\top(x - x^\star))^2\right]
  \le \E\left[\ltwo{a}^4\right]\ltwo{x - x\opt}^2,
\end{equation*}
whence the assumptions of Theorem~\ref{theorem:convex-optimization} are
satisfied. We can give more explicit calculations if we make
standard modeling assumptions, for example, that $b = \<a, x\opt\> +
\varepsilon$, where $\varepsilon$ is an independent mean-zero noise sequence
with $\E[\varepsilon^2] = \sigma^2$. In this case, the minimizer of $f(x) =
\E[F(x; \statrv)]$ is $x\opt$, we have $\<a, x\opt\> - b = -\varepsilon$, and
\begin{equation*}
  \E[\nabla F(x\opt; \statrv) \nabla F(x\opt; \statrv)^\top]
  = \E[(\<a, x\opt\> - b)aa^\top(\<a, x\opt\> - b)]
  = \E[aa^\top \varepsilon^2] = \sigma^2 \E[aa^\top] = \sigma^2 \hess.
\end{equation*}
In particular, the asynchronous iterates satisfy
\begin{equation*}
  n^{-\half} \sum_{k=1}^n (x_k - x\opt)
  \cd \normal(0, \sigma^2 \hess^{-1})
  = \normal\left(0, \sigma^2 \E[aa^\top]^{-1}\right).
\end{equation*}
This has the asymptotic variance of the ordinary least squares estimate of
$x\opt$, which is minimax optimal~\cite[Chapter 5]{LehmannCa98}.

\paragraph{Logistic regression}
As long as the data has finite second moment, logistic regression problems
satisfy all the conditions of Assumption~\ref{assumption:lipschitz} in
Theorem~\ref{theorem:convex-optimization}. In this case we
have $\statval = (a, b) \in \R^d \times \{-1, 1\}$ and instantaneous objective
$F(x; \statval) = \log(1 + \exp(-b \<a, x\>))$.
For fixed $\statval$, this function is Lipschitz continuous and has
gradient and Hessian
\begin{equation*}
  \nabla F(x; \statval) = -\frac{1}{1 + \exp(b \<a, x\>)} ba
  ~~~ \mbox{and} ~~~
  \nabla^2 F(x; \statval)
  = \frac{e^{b \<a, x\>}}{(1 + e^{b \<a, x\>})^2}
  aa^\top,
\end{equation*}
where $\nabla F(x; \statval)$ is Lipschitz continuous as $\norm{\nabla^2
  F(x; (a, b))} \le \frac{1}{4} \ltwo{a}^2$. Thus, so long as
$\E[\ltwo{a}^2] < \infty$ and $\E[\nabla^2 F(x\opt; \statrv)] \succ 0$ (the
latter occurs if $\E[aa^\top]$ is full rank), logistic regression satisfies
the conditions of Theorem~\ref{theorem:convex-optimization}. In particular,
the asynchronous stochastic gradient method achieves optimal convergence
guarantees.

\subsection{Extension to nonlinear problems and variational inequalities}
\label{sec:nonlinear-problems}

We prove Theorem~\ref{theorem:convex-optimization} by way of a more general
result on finding the zeros of a residual operator $\resid : \R^d \to \R^d$,
where we only observe noisy views of $\resid(x)$, and there is unique
$x\opt$ such that $\resid(x\opt) = 0$. Such situations arise, for example,
in the solution of stochastic monotone operator problems
(cf.~\citet*{JuditskyNeTa11} or~\citet{BertsekasTs89}), including finding
equilibria in stochastic convex Nash games
(e.g.~\cite[Sec.~2.1]{JuditskyNeTa11}, \cite[Ex.~3.5.1(d)]{BertsekasTs89}),
general saddle-point problems, or multi-user routing
problems~\cite[Ex.~3.5.1(c)]{BertsekasTs89}. In this more general setting,
we consider the following stochastic and asynchronous iterative process,
which extends that for the convex case outlined previously. Each processor
performs the following asynchronously and independently:
\begin{enumerate}[(i)]
\item Processor reads current problem data $x$ and counter $k$
\item \label{item:grad-comp-nl} Processor receives vector $g = \resid(x) +
  \noise$, where $\noise$ is a random (conditionally) mean-zero noise
  vector, and increments a centralized counter $k$
\item \label{item:update-step-nl} Processor updates $x \leftarrow x -
  \stepsize_k g$ via sequential updates $[x]_j = [x]_j - \stepsize_k [g]_j$ for
  each coordinate $j \in \{1, \ldots, d\}$
\end{enumerate}
As in the convex case, we associate vectors $x_k$ and $g_k$ with the update
performed using $\stepsize_k$, and we let $\noise_k$ denote the noise vector
used to construct $g_k$. As before, these iterates and assignment of indices
again imply that $x_k$ has the form~\eqref{eqn:iterate-formula},
that is, $x_k = -\sum_{i=1}^{k-1} \stepsize_i \errmat^{ki} g_i$ for
diagonal matrices $\errmat^{ki}$ capturing the updates that have been
performed at time $k$.

For this iterative process, we define the increasing sequence of
$\sigma$-fields $\mc{F}_k$ by
\begin{equation}
  \label{eqn:filtration}
  \mc{F}_k = \sigma\left(
  \noise_1, \ldots, \noise_k,
  \left\{\errmat^{ij} : i \le k + 1, j \le i\right\}
  \right),
\end{equation}
that is, the noise variables $\noise_k$ are adapted to the filtration
$\mc{F}_k$, and these $\sigma$-fields are the smallest containing both the
noise and all index updates that have occurred and that will occur to
compute $x_{k + 1}$. Thus we have $x_{k + 1} \in \mc{F}_k$, and our
mean-zero assumption on the noise $\noise$ is that
\begin{equation*}
  \E[\noise_k \mid \mc{F}_{k-1}] = 0.
\end{equation*}

Our analysis builds off of \citeauthor{PolyakJu92}'s study~\cite{PolyakJu92}
of averaging in stochastic approximation, and we model our requirements for
the convergence of the preceding iteration on those they use for the
solution of the nonlinear equality $\resid(x\opt)$.  First, we assume there
is a Lyapunov function $\lyap$ that functions (essentially) as a squared
norm, which satisfies $\lyap(x) \ge \lambda \norm{x}^2$ for all $x \in
\R^d$, $\norm{\nabla \lyap(x) - \nabla \lyap(y)} \le L \norm{x - y}$ for all
$x, y$, that $\nabla \lyap(0) = 0$, and $\lyap(0) = 0$. Note in particular
that this implies
\begin{equation}
  \label{eqn:lyapunov-is-norm-squared}
  \lambda \norm{x}^2 \le
  \lyap(x) \le \lyap(0) + \<\nabla \lyap(0), x - 0\>
  + \frac{L}{2} \norm{x}^2
  = \frac{L}{2} \norm{x}^2
\end{equation}
and that $\norm{\nabla \lyap(x)}^2 \le L^2 \norm{x}^2 \le (L^2 / \lambda)
\lyap(x)$.  In addition, we make the following assumptions on the residual
function (cf.~\cite[Assumption 3.2]{PolyakJu92}).
\begin{assumption}
  \label{assumption:residual-quadratic}
  There exists a matrix $\hess \in \R^{d \times d}$ with $\hess \succ 0$,
  a parameter $0 < \gamma \le 1$, constant $C < \infty$,
  and some $\epsilon > 0$
  such that if $x$ satisfies $\norm{x - x\opt} \le \epsilon$, then
  \begin{equation*}
    \norm{\resid(x) - \hess(x - x\opt)} \le C \norm{x - x\opt}^{1 + \gamma}.
  \end{equation*}
\end{assumption}
\noindent
Assumption~\ref{assumption:residual-quadratic} essentially requires that
$\resid$ is differentiable at $x\opt$ with derivative matrix $\hess \succ
0$.  We also make a few assumptions on the noise process $\noise$
paralleling Assumption 3.3 of \citet{PolyakJu92}; specifically, we assume
$\noise$ implicitly depends on $x \in \R^d$ (so that we may write $\noise_k
= \noise(x_k)$), and that the following assumption holds.
\begin{assumption}
  \label{assumption:noise-fun}
  The noise vector $\noise(x)$ decomposes as $\noise(x) = \noise(0) +
  \noisier(x)$, where $\noise(0)$ is a process satisfying $\E[\noise_k(0)
    \noise_k(0)^\top \mid \mc{F}_{k-1}] \cp \Sigma \succ 0$ for some matrix
  $\Sigma \in \R^{d \times d}$, the boundedness condition $\E[\sup_k
    \E[\norm{\noise_k(0)}^2 \mid \mc{F}_{k-1}]] < \infty$, and
  \begin{equation*}
    \E[\norm{\noisier_k(x)}^2 \mid \mc{F}_{k-1}]
    \le C \norm{x - x\opt}^2
  \end{equation*}
  for some constant $C < \infty$ and all $x \in \R^d$.
\end{assumption}

As in the convex case, we make one of two additional assumptions,
which should be compared with Assumptions~\ref{assumption:strong-convex}
and~\ref{assumption:lipschitz}. The first is that
$\resid$ gives globally strong information about $x\opt$.
\setcounter{saveassumption}{\value{assumption}}
\begin{assumption}[Strongly convex residuals]
  \label{assumption:resid-strong-convex}
  There exists a constant $\lambda_0 > 0$ such that for all $x \in \R^d$, 
  $\<\nabla \lyap(x - x\opt), \resid(x)\> \ge \lambda_0 \lyap(x - x\opt)$.
\end{assumption}
\noindent
Alternatively, we may make an assumption on the boundedness of
$\resid$, which we shall see suffices for proving our main results.
\setcounter{assumption}{\value{saveassumption}}  
\renewcommand{\theassumption}{\Alph{assumption}'}  
\begin{assumption}[Bounded residuals]
  \label{assumption:resid-lipschitz}
  There exist $\lambda_0 > 0$ and $\epsilon > 0$ such that
  \begin{equation*}
    \inf_{0 < \norm{x - x\opt} \le \epsilon}
    \frac{\<\nabla \lyap(x - x\opt), \resid(x)\>}{
      \lyap(x - x\opt)} \ge \lambda_0
    ~~~ \mbox{and} ~~~
    \inf_{\epsilon < \norm{x - x\opt}}
    \<\nabla \lyap(x - x\opt), \resid(x)\> > 0.
  \end{equation*}
  There also exists some $C <\infty$ such that $\norm{\resid(x)} \le C$ and
  $\E[\norms{\noise_k}^2 \mid \mc{F}_{k-1}] \le C^2$ for all $k$ and $x$.
\end{assumption}
\renewcommand{\theassumption}{\Alph{assumption}}

With these assumptions in place, we obtain the following more general
version of Theorem~\ref{theorem:convex-optimization};
indeed, we show
in the sequel how Theorem~\ref{theorem:convex-optimization} follows
from this result.
\begin{theorem}
  \label{theorem:nonlinear}
  Let $\lyap$ be a function satisfying
  inequality~\eqref{eqn:lyapunov-is-norm-squared}, and let
  Assumptions~\ref{assumption:residual-quadratic},
  \ref{assumption:noise-fun}, and~\ref{assumption:delay-moments} hold. Let
  the stepsizes $\stepsize_k = \stepsize k^{-\steppow}$, where
  $\frac{1}{\delmoment - 1} + \frac{1}{1 + \gamma} < \steppow < 1$.  Let one
  of Assumptions~\ref{assumption:resid-strong-convex}
  or~\ref{assumption:resid-lipschitz} hold. Then
  $x_n \cas x\opt$ and
  \begin{equation*}
    \frac{1}{\sqrt{n}}
    \sum_{k=1}^n (x_k - x\opt)
    \cd \normal\left(0, \hess^{-1} \Sigma \hess^{-1} \right).
  \end{equation*}
\end{theorem}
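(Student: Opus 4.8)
The plan is to adapt \citeauthor{PolyakJu92}'s analysis of averaged stochastic approximation~\cite{PolyakJu92}, treating asynchrony as a perturbation of the ordinary recursion which Assumption~\ref{assumption:delay-moments} renders asymptotically negligible. Using formula~\eqref{eqn:iterate-formula}, introduce the \emph{synchronized} iterate $y_k \defeq -\sum_{i=1}^{k-1}\stepsize_i g_i$, which obeys the clean recursion $y_{k+1} = y_k - \stepsize_k g_k$ with $g_k = \resid(x_k) + \noise_k$, and the \emph{asynchrony error}
\begin{equation*}
  \error_k \defeq x_k - y_k
  = \sum_{i=1}^{k-1}\stepsize_i(I - \errmat^{ki})g_i
  = \sum_{i \,:\, i + \delay_i > k}\stepsize_i(I - \errmat^{ki})g_i ,
\end{equation*}
the last equality holding because $\errmat^{ki} = I$ once the $i$th update has been fully incorporated. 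The first technical step is to bound $\error_k$: combining the triangle inequality with H\"older's inequality and Markov's inequality applied to $\E[\delay_i^\delmoment] \le \delay^\delmoment$, against an a priori (bootstrapped) bound on $\E\norm{g_i}^2$, gives an estimate of the form $\E\norm{\error_k} \lesssim \stepsize_k$ together with the stronger summability statements used below; the delay moment $\delmoment$ is partly consumed by these H\"older steps, and this is the source of the $\frac{1}{\delmoment - 1}$ term in the admissible range of $\steppow$.

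With the error bound in hand I would first establish $x_n \cas x\opt$. Expanding $\lyap(y_{k+1} - x\opt)$ along the recursion for $y_k$, using the quadratic/Lipschitz bounds~\eqref{eqn:lyapunov-is-norm-squared} on $\lyap$, the descent property of Assumption~\ref{assumption:resid-strong-convex} (or, under Assumption~\ref{assumption:resid-lipschitz}, its local version together with the boundedness of $\resid$ and of $\E[\norms{\noise_k}^2\mid\mc{F}_{k-1}]$), the noise decomposition of Assumption~\ref{assumption:noise-fun}, and writing $\resid(x_k) = \resid(y_k) + (\resid(x_k) - \resid(y_k))$ with $\norm{\resid(x_k) - \resid(y_k)} \lesssim \norm{\error_k}$ near $x\opt$ (Assumption~\ref{assumption:residual-quadratic} makes $\resid$ locally Lipschitz), one reaches an almost-supermartingale (Robbins--Siegmund) inequality for $\lyap(y_k - x\opt)$ in which all of the perturbing terms --- those of order $\stepsize_k^2$, the martingale noise increments, and those involving $\error_k$ --- are summable in the requisite sense because $\steppow > \half$. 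The supermartingale convergence theorem then gives $\lyap(y_k - x\opt) \to 0$ almost surely, hence $y_k \to x\opt$, and since $\norm{\error_k} \to 0$ also $x_k \to x\opt$. Iterating the same recursion in expectation, the stated stepsize condition also yields the rate $\E\norm{x_k - x\opt}^2 = \order(\stepsize_k)$, which I use below.

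For the central limit theorem, by Assumption~\ref{assumption:residual-quadratic} we have, once $\norm{x_k - x\opt} \le \epsilon$ (which holds eventually by the previous step), $\resid(x_k) = \hess(x_k - x\opt) + r_k$ with $\norm{r_k} \le C\norm{x_k - x\opt}^{1+\gamma}$. Substituting $\resid(x_k) = (y_k - y_{k+1})/\stepsize_k - \noise_k$ gives $\hess(x_k - x\opt) = (y_k - y_{k+1})/\stepsize_k - \noise_k - r_k$; summing over $k = 1,\ldots,n$ and dividing by $\sqrt{n}$,
\begin{equation*}
  \hess \cdot \frac{1}{\sqrt{n}}\sum_{k=1}^n (x_k - x\opt)
  = \frac{1}{\sqrt{n}}\sum_{k=1}^n \frac{y_k - y_{k+1}}{\stepsize_k}
  \;-\; \frac{1}{\sqrt{n}}\sum_{k=1}^n \noise_k
  \;-\; \frac{1}{\sqrt{n}}\sum_{k=1}^n r_k .
\end{equation*}
I would then dispatch the right-hand side term by term: (i) by Abel summation and the rate $\norm{y_k - x\opt} = O_P(\stepsize_k^{1/2})$, the first term is $O_P(n^{\steppow/2}) = o_P(\sqrt{n})$ since $\steppow < 1$; (ii) $\E\norm{\frac{1}{\sqrt{n}}\sum_{k=1}^n r_k} \lesssim \frac{1}{\sqrt{n}}\sum_{k=1}^n \stepsize_k^{(1+\gamma)/2} \asymp n^{\frac12 - \steppow(1+\gamma)/2} \to 0$, which is exactly why $\steppow > \frac{1}{1+\gamma}$ is required; (iii) $\frac{1}{\sqrt{n}}\sum_{k=1}^n \noise_k \cd \normal(0,\Sigma)$ by the martingale central limit theorem, since $\E[\noise_k(0)\noise_k(0)^\top\mid\mc{F}_{k-1}] \cp \Sigma$ and $\E[\sup_k\E[\norm{\noise_k(0)}^2\mid\mc{F}_{k-1}]] < \infty$ give the covariance convergence and Lindeberg condition, while the state-dependent part has $\E\norm{\noisier_k(x_k)}^2 \lesssim \E\norm{x_k - x\opt}^2 \to 0$ and is negligible. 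Hence the right-hand side converges in distribution to $\normal(0,\Sigma)$, and multiplying by $\hess^{-1}$ (which exists since $\hess \succ 0$) gives $\frac{1}{\sqrt{n}}\sum_{k=1}^n (x_k - x\opt) \cd \normal(0, \hess^{-1}\Sigma\hess^{-1})$, as claimed; here asynchrony has entered only through $\error_k$, in establishing consistency and the rate.

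The step I expect to be the main obstacle is the asynchrony-error analysis together with its circularity: bounding $\error_k$ needs a bound on $\E\norm{g_i}^2$, hence on $\E\norm{x_i - x\opt}^2$, which is itself proved from a recursion already containing $\error_i$. Untangling this requires a careful bootstrap --- a crude, polynomially growing a priori bound, then stability, then the sharp $\error_k$ estimate --- and it is the calibration of precisely this argument (rather than the otherwise routine, perturbed Polyak--Juditsky calculation) that forces the lower bound $\steppow > \frac{1}{\delmoment - 1} + \frac{1}{1+\gamma}$. A secondary point requiring care is verifying the Lindeberg condition and convergence of the conditional covariance in step (iii), since the noise $\noise_k = \noise(x_k)$ is evaluated at the inconsistently-read random iterate $x_k$ rather than at a deterministic point.
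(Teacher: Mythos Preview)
Your high-level plan coincides with the paper's: introduce the synchronized sequence $y_k=\wt{x}_k=-\sum_{i<k}\stepsize_i g_i$, use Robbins--Siegmund on $\lyap(y_k-x\opt)$ for almost sure convergence, then run a Polyak--Juditsky argument for the averaged CLT, treating the asynchrony discrepancy $x_k-y_k$ as a perturbation. The execution, however, differs in three places you should be aware of. First, the paper does not bound the asynchrony error via H\"older/Markov on $\delay_i$; instead it uses Borel--Cantelli on $\P(\delay_k\ge n-k)\le \delay^\delmoment/(n-k)^\delmoment$ to show that \emph{almost surely, eventually} all stale updates are at most $n^\npow$ old for any $\npow>1/(\delmoment-1)$, and then works conditionally on the (nested, $\mc{F}_{n-1}$-measurable) events $\badratioevent_{n,t}$ that this bound holds; this yields $\norm{x_n-y_n}\lesssim\stepsize_n n^{\npow}$ (not the $\lesssim\stepsize_n$ you wrote --- the delay window contributes a factor) and cleanly decouples delays from gradients. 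Second, for the CLT the paper does not redo Abel summation directly: it introduces a further linearized sequence $\wt{\error}_{k+1}'=(I-\stepsize_k\hess)\wt{\error}_k'-\stepsize_k\noise_k$, shows $n^{-1/2}\sum_k(\wt{\error}_k-\wt{\error}_k')\cp 0$ (this is where the $\norm{x_k-x\opt}^{1+\gamma}$ residual from Assumption~\ref{assumption:residual-quadratic} is controlled, and where the condition $\steppow-\npow>1/(1+\gamma)$ enters), and then invokes \citeauthor{PolyakJu92}'s Theorem~1 as a black box for $\wt{\error}'$; your Abel step~(i) is essentially what lives inside that black box. Third, the paper never proves the rate $\E\norm{x_k-x\opt}^2=\order(\stepsize_k)$ you rely on; it closes the bootstrap loop you correctly flag by showing only that $\sup_n\E[\indic{\badratioevent_{n,t}}\norm{x_n-x\opt}^2]<\infty$ (via a self-referential recursion argument, Lemma~\ref{lemma:smaller-than-past}), which is enough because the $\norm{\wt{\error}_k}^{1+\gamma}$ sums are then bounded using one-step Lyapunov contraction on localization events $\{\norm{\wt{\error}_i}\le\epsilon,~i\in[k/2,k)\}$ rather than a global moment rate. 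Under Assumption~\ref{assumption:resid-lipschitz} the paper's argument simplifies (no $\badratioevent$ conditioning needed for boundedness), but under Assumption~\ref{assumption:resid-strong-convex} your claimed $\order(\stepsize_k)$ rate would itself require essentially this machinery.
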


We may compare this result to \citeauthor{PolyakJu92}'s Theorem 2, which
gives identical covariance matrix and asymptotic convergence guarantees, but
with weaker conditions on the function $\lyap$ and stepsize sequence
$\stepsize_k$. Our mildly stronger assumptions---namely,
Assumptions~\ref{assumption:resid-strong-convex}
and~\ref{assumption:resid-lipschitz} are stronger versions of Assumption 3.1
of \citet{PolyakJu92}, which requires only the conditions on $\lyap$ and
$\resid$ of Assumption~\ref{assumption:resid-lipschitz}---allow our result
to apply even in the asynchronous settings considered in this paper.

\section{Experimental results}
\label{sec:experiments}


\newcommand{\density}{p_{\rm{nz}}}
\newcommand{\sparseproj}{\Pi_{\density}}
\newcommand{\batchsize}{\mathsf{B}}
\newcommand{\littleb}{b}

We provide empirical results studying the performance of asynchronous
stochastic approximation schemes on several simulated and real-world datasets.
Our theoretical results suggest that asynchrony should introduce little
degradation in solution quality; we also
investigate the engineering techniques necessary to truly leverage the power
of asynchronous stochastic procedures.
In our experiments, we focus on linear and logistic regression, the examples
given in Section~\ref{sec:examples}; that is, we have data $(a_i, b_i) \in
\R^d \times \R$ (for linear regression) or $(a_i, b_i) \in \R^d \times \{-1,
1\}$ (for logistic regression), for $i = 1, \ldots, N$, and objectives
\begin{equation}
  \label{eqn:empirical-expectation}
  f(x) = \frac{1}{2N} \sum_{i=1}^N (\<a_i, x\> - b_i)^2
  ~~~ \mbox{and} ~~~
  f(x) = \frac{1}{N} \sum_{i=1}^N \log\big(1 + \exp(-b_i \<a_i, x\>)
  \big).
\end{equation}

We perform each of our experiments using a 48-core Intel Xeon machine with 1
terabyte of RAM, and have put code and binaries to replicate our experiments
on \texttt{CodaLab}~\cite{DuchiChRe15code}. The Xeon architecture puts each
core onto one of four sockets, where each socket has its own memory. To
limit the impact of communication overhead in our experiments, we limit all
experiments to at most 12 cores, all on the same socket. Within an
experiment---based on the empirical
expectations~\eqref{eqn:empirical-expectation}---we iterate in
\emph{epochs}, so that our stochastic gradient procedure loops through
random permutations of all examples, touching each example exactly once per
epoch in a different random order within each epoch
(cf.~\cite{RechtRe12}).\footnote{Strictly speaking, this violates the
  stochastic gradient assumption, but it allows direct comparison with the
  original \hogwild code and implementation~\cite{NiuReReWrNi11}.}  We use
the following two schemes for the stepsize $\stepsize_k$.
\begin{description}
\item[Decreasing stepsizes.] We set $\steppow = 0.55$ and let $\stepsize_k =
  k^{-\steppow}$. The value written on the shared iteration counter $k$ by
  one processor may be overwritten by other processors.
\item[Exponential backoff stepsizes.] We use a fixed stepsize $\stepsize$,
  decreasing the stepsize by a factor of $0.95$ between each epoch (this
  matches the experimental protocol of \citet{NiuReReWrNi11} and follows
  \citet{HazanKa11} and \citet{GhadimiLa12}).
\end{description}
To address issues of hardware resource contention
(see Section~\ref{sec:hardware-fun} for more on this), in
some cases we use a \emph{mini-batching} strategy. Abstractly, in the
formulation of the basic problem~\eqref{eqn:convex-problem}, this means that
in each calculation of a stochastic gradient $g$ we draw $\batchsize \ge 1$
samples $\statrv_1, \ldots, \statrv_\batchsize$ i.i.d.\ according to $P$,
then set
\begin{equation}
  \label{eqn:mini-batching}
  g(x) = \frac{1}{\batchsize} \sum_{\littleb=1}^\batchsize \nabla F(x;
  \statrv_\littleb).
\end{equation}
The mini-batching strategy~\eqref{eqn:mini-batching} does not change the
(asymptotic) convergence guarantees of asynchronous stochastic gradient
descent, as the covariance matrix $\Sigma = \E[g(x\opt) g(x\opt)^\top]$
satisfies $\Sigma = \frac{1}{\batchsize} \E[\nabla F(x\opt; \statrv) \nabla
  F(x\opt; \statrv)^\top]$, while the total iteration count is reduced by the
a factor $\batchsize$. Lastly, we measure the performance of
optimization schemes via \emph{speedup}, defined as
\begin{equation}
  \label{eqn:speedup}
  \text{speedup}
  =
  \frac{\text{average epoch runtime on a single core using stochastic
      gradient descent}}{
    \text{average epoch runtime of asynchronous method on $m$ cores}}.
\end{equation}
In our experiments, we see that increasing the number $m$ of cores does not
change the gap in optimality $f(x_k) - f(x\opt)$ after each epoch, so speedup
is equivalent to the ratio of the time required to obtain an
$\epsilon$-accurate solution using a single processor/core to that required
to obtain $\epsilon$-accurate solution using $m$ processors/cores.

\subsection{Efficiency and sparsity}

For our first set of experiments, we study the effect that data sparsity has
on the convergence behavior of asynchronous methods using the linear
regression objective~\eqref{eqn:empirical-expectation}. Sparsity has been an
essential part of the analysis of many asynchronous and parallel
optimization schemes~\cite{NiuReReWrNi11,DuchiJoMc13_nips,RichtarikTa15},
while our theoretical results suggest it should be unimportant, so
understanding these effects is important.  We generate synthetic linear
regression problems with $N = 10^6$ examples in $d = 10^3$ dimensions via
the following procedure.  Let $\density \in \openleft{0}{1}$ be the desired
fraction of non-zero gradient entries, and let $\sparseproj$ be a random
projection operator that zeros out all but a fraction $\density$ of the
elements of its argument, meaning that for $a \in \R^d$, $\sparseproj(a)$
uniformly at random chooses $\density d$ elements of $a$ and leaves them
identical, zeroing the remaining elements.  We generate data for our linear
regression by drawing a random vector $u\opt \sim \normal(0, I_{d \times
  d})$, then constructing
\begin{equation}
  \label{eqn:simulated-regression}
  b_i = \<a_i, u\opt\> + \varepsilon_i,
  ~~ \mbox{where} ~~
  \varepsilon_i \simiid \normal(0, 1),
  ~~ \wt{a}_i \simiid \normal(0, I_{d \times d}),
  ~~ \mbox{and} ~~ a_i = \sparseproj(\wt{a}_i)
\end{equation}
for $i = 1, \ldots, N$, where $\sparseproj(\wt{a}_i)$ denotes an independent
random sparse projection of $\wt{a}_i$. To measure optimality gap, we
directly compute $x\opt = (A^TA)^{-1} A^Tb$, where $A = [a_1 ~ a_2 ~ \cdots
  ~ a_N]^\top \in \R^{N \times d}$.

\begin{figure}[t]
  \begin{center}
    \begin{tabular}{cccc}
      \hspace{-.3cm}
      \psfrag{optimalitygap}[bl][bl][.6]{$f(x_k) - f(x\opt)$}
      \psfrag{statisticaldensity}{}
      \psfrag{\#epochs}[bl][bl][.6]{Epochs}
      \includegraphics[width=.24\columnwidth,trim={20 0 0 0},clip=true]{nips/figures/batch10_d5_stat_eff_decreasing_stepsizes_with_error_bars} &
      \hspace{-.3cm}
      \psfrag{statisticaldensity}{}
      \psfrag{optimalitygap}[bl][bl][.6]{}
      \psfrag{\#epochs}[bl][bl][.6]{Epochs}
      \includegraphics[width=.24\columnwidth,trim={20 0 0 0},clip=true]{nips/figures/batch10_d10_stat_eff_decreasing_stepsizes_with_error_bars} &
      \hspace{-.3cm}
      \psfrag{statisticaldensity}{}
      \psfrag{\#epochs}[bl][bl][.6]{Epochs}
      \psfrag{optimalitygap}[bl][bl][.6]{}
      \includegraphics[width=.24\columnwidth,trim={20 0 0 0},clip=true]{nips/figures/batch10_d200_stat_eff_decreasing_stepsizes_with_error_bars} &
      \hspace{-.3cm}
      \psfrag{statisticaldensity}{}
      \psfrag{\#epochs}[bl][bl][.6]{Epochs}
      \psfrag{optimalitygap}[bl][bl][.6]{}
      \includegraphics[width=.24\columnwidth,trim={20 0 0 0},clip=true]{nips/figures/batch10_d1000_stat_eff_decreasing_stepsizes_with_error_bars} \\
      (a) $\density = .005$ & (b) $\density = .01$
      & (c) $\density = .2$ & (d) $\density = 1$
    \end{tabular}
    \caption{\label{fig:synthetic-sparsity-dec} Decreasing stepsizes:
      Optimality gaps for synthetic linear regression experiments showing
      effects of data sparsity and asynchrony on $f(x_k) - f(x\opt)$
      with stepsize $\stepsize_k = k^{-\steppow}$. A
      fraction $\density$ of each vector $a_i \in \R^d$ is non-zero.}
  \end{center}
\end{figure}
\begin{figure}[t]
  \begin{center}
    \begin{tabular}{cccc}
      \hspace{-.3cm}
      \psfrag{optimalitygap}[bl][bl][.6]{$f(x_k) - f(x\opt)$}
      \psfrag{statisticaldensity}{}
      \psfrag{\#epochs}[bl][bl][.6]{Epochs}
      \includegraphics[width=.24\columnwidth,trim={20 0 0 0},clip=true]{nips/figures/batch10_d5_stat_eff_exp_backoff_with_error_bars} &
      \hspace{-.3cm}
      \psfrag{statisticaldensity}{}
      \psfrag{optimalitygap}[bl][bl][.6]{}
      \psfrag{\#epochs}[bl][bl][.6]{Epochs}
      \includegraphics[width=.24\columnwidth,trim={20 0 0 0},clip=true]{nips/figures/batch10_d10_stat_eff_exp_backoff_with_error_bars} &
      \hspace{-.3cm}
      \psfrag{statisticaldensity}{}
      \psfrag{\#epochs}[bl][bl][.6]{Epochs}
      \psfrag{optimalitygap}[bl][bl][.6]{}
      \includegraphics[width=.24\columnwidth,trim={20 0 0 0},clip=true]{nips/figures/batch10_d200_stat_eff_exp_backoff_with_error_bars} &
      \hspace{-.3cm}
      \psfrag{statisticaldensity}{}
      \psfrag{\#epochs}[bl][bl][.6]{Epochs}
      \psfrag{optimalitygap}[bl][bl][.6]{}
      \includegraphics[width=.24\columnwidth,trim={20 0 0 0},clip=true]{nips/figures/batch10_d1000_stat_eff_exp_backoff_with_error_bars} \\
      (a) $\density = .005$ & (b) $\density = .01$
      & (c) $\density = .2$ & (d) $\density = 1$
    \end{tabular}
    \caption{\label{fig:synthetic-sparsity} Exponentially decreasing stepsizes:
      Optimality gaps for synthetic linear regression experiments showing
      effects of data sparsity and asynchrony on $f(x_k) - f(x\opt)$ with
      epoch-based stepsizes $\stepsize_{{\rm epoch}~k} = .95^k$. A fraction
      $\density$ of each vector $a_i \in \R^d$ is non-zero.}
  \end{center}
\end{figure}

In Figures~\ref{fig:synthetic-sparsity-dec} and~\ref{fig:synthetic-sparsity}
we plot the results of simulations using densities $\density \in \{.005,
.01, .2, 1\}$ and mini-batch size $\batchsize = 10$, showing the gap $f(x_k)
- f(x\opt)$ as a function of the number of epochs for each of the given
sparsity levels. Figure~\ref{fig:synthetic-sparsity-dec} gives results for
our simulated data~\eqref{eqn:simulated-regression} using the decreasing
stepsize scheme $\stepsize_k = k^{-\steppow}$ with $\steppow = .55$, while
Figure~\ref{fig:synthetic-sparsity} gives results using the exponential
backoff scheme of~\cite{HazanKa11,GhadimiLa12,NiuReReWrNi11}, where
stepsizes are chosen per epoch as $\stepsize_{{\rm epoch}~k} = .95^k$.  Each
plot includes error bars with standard errors over 10 random experiments
using different random seeds (the errors are generally too small to see in
the plots). We give results using 1, 4, 8, and 10 processor cores
(increasing degrees of asynchrony). From the plots, we see that regardless
of the number of cores, the convergence behavior is nearly identical, with
minor degradations in performance for the sparsest data. (We plot the gaps
$f(x_k) - f(x\opt)$ on a logarithmic axis.)  Moreover, as the data becomes
denser, the more asynchronous methods---larger number of cores---achieve
performance essentially identical to the fully synchronous method in terms
of convergence versus number of epochs.

\begin{figure}[ht]
  \begin{center}
    \begin{tabular}{cccc}
      \hspace{-.1cm}
      \psfrag{optimalitygap}[bl][bl][.6]{$f(x_k) - f(x\opt)$}
      \psfrag{hardwaredensity}{}
      \psfrag{\#cores}[bl][bl][.6]{Cores}
      \includegraphics[width=.24\columnwidth,trim={20 0 0 0},clip=true]{nips/figures/batch10_d5_hard_eff_decreasing_stepsizes_with_error_bars} &
      \hspace{-.3cm}
      \psfrag{hardwaredensity}{}
      \psfrag{optimalitygap}[bl][bl][.6]{}
      \psfrag{\#cores}[bl][bl][.6]{Cores}
      \includegraphics[width=.24\columnwidth,trim={20 0 0 0},clip=true]{nips/figures/batch10_d10_hard_eff_decreasing_stepsizes_with_error_bars} &
      \hspace{-.3cm}
      \psfrag{hardwaredensity}{}
      \psfrag{\#cores}[bl][bl][.6]{Cores}
      \psfrag{optimalitygap}[bl][bl][.6]{}
      \includegraphics[width=.24\columnwidth,trim={20 0 0 0},clip=true]{nips/figures/batch10_d200_hard_eff_decreasing_stepsizes_with_error_bars} &
      \hspace{-.3cm}
      \psfrag{hardwaredensity}{}
      \psfrag{\#cores}[bl][bl][.6]{Cores}
      \psfrag{optimalitygap}[bl][bl][.6]{}
      \includegraphics[width=.24\columnwidth,trim={20 0 0 0},clip=true]{nips/figures/batch10_d1000_hard_eff_decreasing_stepsizes_with_error_bars} \\
      (a) $\density = .005$ & (b) $\density = .01$
      & (c) $\density = .2$ & (d) $\density = 1$
    \end{tabular}
    \caption{\label{fig:synthetic-sparsity-hardware-dec} Decreasing
      stepsizes: Speedups for synthetic linear regression experiments
      showing effects of data sparsity on speedup~\eqref{eqn:speedup}
      with stepsize $\stepsize_k = k^{-\steppow}$. A
      fraction $\density$ of each vector $a_i \in \R^d$ is non-zero.}
  \end{center}
\end{figure}
\begin{figure}[ht]
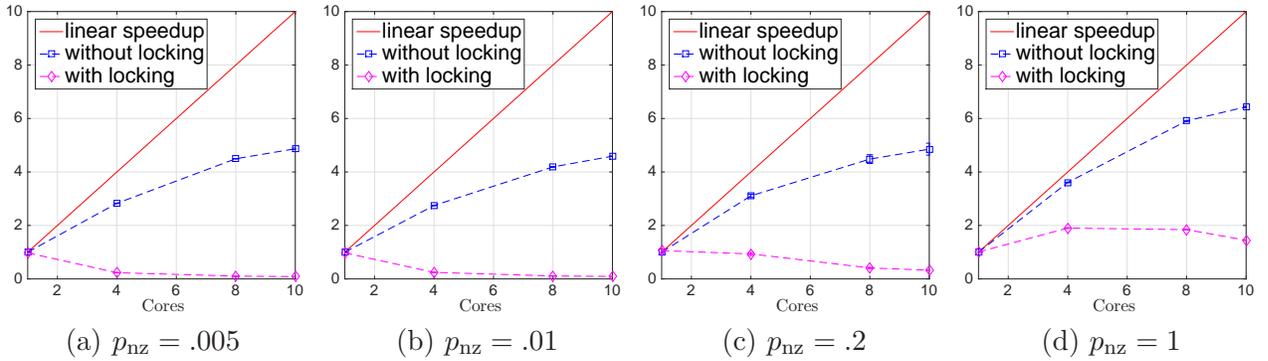

  \begin{center}
    \begin{tabular}{cccc}
      \hspace{-.1cm}
      \psfrag{optimalitygap}[bl][bl][.6]{$f(x_k) - f(x\opt)$}
      \psfrag{hardwaredensity}{}
      \psfrag{\#cores}[bl][bl][.6]{Cores}
      \includegraphics[width=.24\columnwidth,trim={20 0 0 0},clip=true]{nips/figures/batch10_d5_hard_eff_exp_backoff_with_error_bars} &
      \hspace{-.3cm}
      \psfrag{hardwaredensity}{}
      \psfrag{optimalitygap}[bl][bl][.6]{}
      \psfrag{\#cores}[bl][bl][.6]{Cores}
      \includegraphics[width=.24\columnwidth,trim={20 0 0 0},clip=true]{nips/figures/batch10_d10_hard_eff_exp_backoff_with_error_bars} &
      \hspace{-.3cm}
      \psfrag{hardwaredensity}{}
      \psfrag{\#cores}[bl][bl][.6]{Cores}
      \psfrag{optimalitygap}[bl][bl][.6]{}
      \includegraphics[width=.24\columnwidth,trim={20 0 0 0},clip=true]{nips/figures/batch10_d200_hard_eff_exp_backoff_with_error_bars} &
      \hspace{-.3cm}
      \psfrag{hardwaredensity}{}
      \psfrag{\#cores}[bl][bl][.6]{Cores}
      \psfrag{optimalitygap}[bl][bl][.6]{}
      \includegraphics[width=.24\columnwidth,trim={20 0 0 0},clip=true]{nips/figures/batch10_d1000_hard_eff_exp_backoff_with_error_bars} \\
      (a) $\density = .005$ & (b) $\density = .01$
      & (c) $\density = .2$ & (d) $\density = 1$
    \end{tabular}
    \caption{\label{fig:synthetic-sparsity-hardware} Exponentially
      decreasing stepsizes: Speedups for synthetic linear regression
      experiments showing effects of data sparsity on
      speedup~\eqref{eqn:speedup} with epoch-based stepsizes
      $\stepsize_{{\rm epoch}~k} = .95^k$. A fraction $\density$ of each
      vector $a_i \in \R^d$ is non-zero.}
  \end{center}
\end{figure}

In Figures~\ref{fig:synthetic-sparsity-hardware-dec}
and~\ref{fig:synthetic-sparsity-hardware}, we plot the speedup achieved for
the synthetic regression problem with data~\eqref{eqn:simulated-regression}
using different numbers of cores for the experiments in
Figures~\ref{fig:synthetic-sparsity-dec} and~\ref{fig:synthetic-sparsity}
(as before, Fig.~\ref{fig:synthetic-sparsity-hardware-dec} uses stepsizes
$\stepsize_k = k^{-\steppow}$ and Fig.~\ref{fig:synthetic-sparsity-hardware}
uses stepsizes exponentially decreasing between epochs). As a point of
comparison, we also implement a synchronized method, which uses multiple
cores to compute gradients independently $g^i$ on each core $i = 1, \ldots,
c$, then computes the average $\bar{g} = \frac{1}{c} \sum_{i=1}^c g^i$ and
uses that to perform a standard stochastic gradient update; this requires
explicit synchronization (locking) of the updates, though with sufficiently
large batch sizes $\batchsize$ computation may theoretically overwhelm the
communication and locking overhead~\cite{DekelGiShXi12}.
For comparison, we use the same batch size $\batchsize = 10$ for each of the
synchronous and asynchronous procedures, and we see see that the performance
of the naive locking strategy is worse than than the asynchronous gradient
method across all data densities $\density$. At higher densities, more
computation is necessary within each gradient computation, so that the
communication overhead causes less performance degradation for the
synchronous method, yet the asynchronous method also benefits and attains
better relative performance.  We see clearly that data sparsity is not
necessary for the asynchronous gradient method to enjoy substantial
perfromance benefits.

\begin{table}[ht!]
  \begin{center}
    \begin{tabular}{|r|c|c|c|c|}
      \hline 
      \multicolumn{5}{|c|}{No batching ($\batchsize = 1$)} \\
      \hline
      Number of cores & 1 & 4 & 8 & 10 \tabularnewline
      \hline
      fraction of L1 misses & 0.0021 $\pm$ 0.0001 & 0.0061 $\pm$ 0.0001 & 0.0096 $\pm$ 0.0001 & 0.0102 $\pm$ 0.0001 \tabularnewline
      \hline
      fraction of L2 misses & 0.50 $\pm$ 0.01 & 0.63 $\pm$ 0.01 & 0.76 $\pm$ 0.01 & 0.78 $\pm$ 0.01\tabularnewline
      \hline  
      fraction of L3 misses & 0.41 $\pm$ 0.01 & 0.25 $\pm$ 0.01 & 0.24 $\pm$ 0.01 & 0.25 $\pm$ 0.01\tabularnewline
      \hline 
      epoch average time (s) & 4.55 & 1.85 & 1.61 & 1.47\tabularnewline
      \hline 
      \textbf{speedup} & \textbf{1.00} & \textbf{2.46 $\pm$ 0.01} & \textbf{2.83 $\pm$ 0.01} & \textbf{3.09 $\pm$ 0.01} \\
      \hline
      \multicolumn{5}{c}{\vspace{-.1cm}}\\ 
      \hline
      \multicolumn{5}{|c|}{Batch size $\batchsize = 10$} \\
      \hline
      Number of cores & 1 & 4 & 8 & 10 \tabularnewline
      \hline
      fraction of L1 misses & 0.0027 $\pm$ 0.0002 & 0.0033 $\pm$ 0.0001 & 0.0043 $\pm$ 0.0001 & 0.0046 $\pm$ 0.0001\tabularnewline
      \hline
      fraction of L2 misses & 0.44 $\pm$ 0.01 & 0.50 $\pm$ 0.01 & 0.60 $\pm$ 0.01 & 0.63 $\pm$ 0.01\tabularnewline
      \hline 
      fraction of L3 misses & 0.35 $\pm$ 0.03 & 0.33 $\pm$ 0.01 & 0.33 $\pm$ 0.01 & 0.33 $\pm$ 0.01\tabularnewline
      \hline 
      epoch average time (s) & 2.97 & 0.87 & 0.58 & 0.51\tabularnewline
      \hline 
      \textbf{speedup} & \textbf{1.00} & \textbf{3.42 $\pm$ 0.01} & \textbf{5.16 $\pm$ 0.02} & \textbf{5.80 $\pm$ 0.03}\tabularnewline
      \hline
    \end{tabular}
    \caption{\label{table:cache-fun} Memory traffic for batched
      updates~\eqref{eqn:mini-batching} versus non-batched updates
      ($\batchsize = 1$) for a dense linear regression problem in $d = 10^3$
      dimensions with a sample of size $N = 10^6$. Cache misses are
      substantially higher with $\batchsize = 1$.}
  \end{center}
  \vspace{-.3cm}
\end{table}

\subsection{Hardware issues and cache locality}
\label{sec:hardware-fun}

We now detail a set of experiments investigating hardware issues that arise
even in the implementation of asynchronous gradient methods.  The Intel x86
architecture (as with essentially every processor architecture) organizes
memory in a hierarchy, going from level 1 to level 3 (L1 to L3) caches of
increasing sizes. An important aspect of the speed of different optimization
schemes is the relative fraction of memory \emph{hits}, meaning accesses to
memory that is cached locally (in order of decreasing speed, L1, L2, or L3
cache). In Table~\ref{table:cache-fun}, we show the proportion of cache
misses at each level of the memory hierarchy for our synthetic regression
experiment with fully dense data ($\density = 1$) over the execution of 20
epochs, averaged over 10 different experiments. We compare memory contention
when the batch size $\batchsize$ used to compute the local asynchronous
gradients~\eqref{eqn:mini-batching} is 1 and 10. We see that the proportion
of misses for the fastest two levels---1 and 2---of the cache for
$\batchsize = 1$ increase significantly with the number of cores, while
increasing the batch size to $\batchsize = 10$ substantially mitigates cache
incoherence.  In particular, we maintain (near) linear increases in
iteration speed with little degradation in solution quality (the gap
$f(\what{x}) - f(x\opt)$ output by each of the procedures with and without
batching is identical to within $10^{-3}$;
cf.\ Figure~\ref{fig:synthetic-sparsity}(d)).

\begin{figure}[ht]
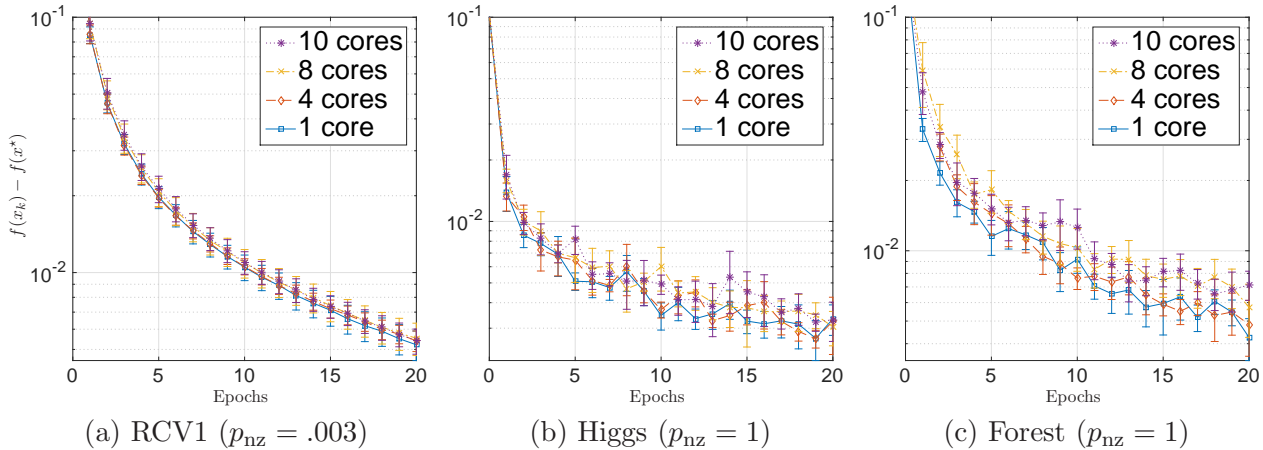

  \begin{center}
    \begin{tabular}{ccc}
      \hspace{-.1cm}
      \psfrag{optimalitygap}[bl][bl][.6]{$f(x_k) - f(x\opt)$}
      \psfrag{statisticalefficiency}{}
      \psfrag{\#epochs}[bl][bl][.6]{Epochs}
      \includegraphics[width=.32\columnwidth,trim={20 0 0 0},clip=true]{nips/figures/rcv1_stat_eff_decreasing_stepsizes_with_error_bars} &
      \hspace{-.3cm}
      \psfrag{statisticalefficiency}{}
      \psfrag{optimalitygap}[bl][bl][.6]{}
      \psfrag{\#epochs}[bl][bl][.6]{Epochs}
      \includegraphics[width=.32\columnwidth,trim={20 0 0 0},clip=true]{nips/figures/higgs_quantized_stat_eff_decreasing_stepsizes_with_error_bars} &
      \hspace{-.3cm}
      \psfrag{statisticalefficiency}{}
      \psfrag{\#epochs}[bl][bl][.6]{Epochs}
      \psfrag{optimalitygap}[bl][bl][.6]{}
      \includegraphics[width=.32\columnwidth,trim={20 0 0 0},clip=true]{nips/figures/forest_stat_eff_decreasing_stepsizes_with_error_bars}
      \\
      (a) RCV1 ($\density = .003$) & (b) Higgs ($\density = 1$)
      & (c) Forest ($\density = 1$)
    \end{tabular}
    \caption{\label{fig:real-statistical-efficiency-dec} Decreasing
      stepsizes: Optimality gaps $f(x_k) - f(x\opt)$ on the (a) RCV1, (b)
      Higgs, and (c) Forest Cover datasets using stepsize $\stepsize_k =
      k^{-\steppow}$ with $\steppow = .55$.}
  \end{center}
\end{figure}
\begin{figure}[th]
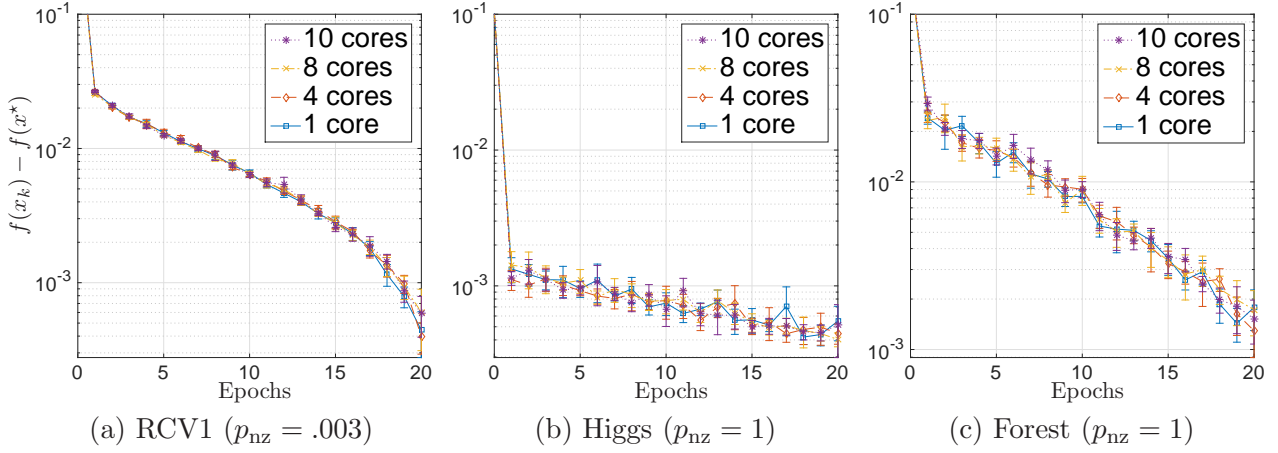

  \begin{center}
    \begin{tabular}{ccc}
      \hspace{-.1cm}
      \psfrag{optimalitygap}[bl][bl][.8]{$f(x_k) - f(x\opt)$}
      \psfrag{statisticalefficiency}{}
      \psfrag{\#epochs}[bl][bl][.8]{Epochs}
      \includegraphics[width=.32\columnwidth,trim={20 0 0 0},clip=true]{nips/figures/rcv1_stat_eff} &
      \hspace{-.3cm}
      \psfrag{statisticalefficiency}{}
      \psfrag{optimalitygap}[bl][bl][.6]{}
      \psfrag{\#epochs}[bl][bl][.8]{Epochs}
      \includegraphics[width=.32\columnwidth,trim={20 0 0 0},clip=true]{nips/figures/higgs_quantized_stat_eff} &
      \hspace{-.3cm}
      \psfrag{statisticalefficiency}{}
      \psfrag{\#epochs}[bl][bl][.8]{Epochs}
      \psfrag{optimalitygap}[bl][bl][.6]{}
      \includegraphics[width=.32\columnwidth,trim={20 0 0 0},clip=true]{nips/figures/forest_stat_eff}
      \\
      (a) RCV1 ($\density = .003$) & (b) Higgs ($\density = 1$)
      & (c) Forest ($\density = 1$)
    \end{tabular}
    \caption{\label{fig:real-statistical-efficiency} Exponentially decreasing
      stepsizes: Optimality gaps
      $f(x_k) - f(x\opt)$ on the (a) RCV1, (b) Higgs, and (c) Forest
      Cover datasets with epoch-based stepsizes
      $\stepsize_{{\rm epoch}~k} = .95^k$.}
  \end{center}
\end{figure}

\subsection{Real datasets}

We perform experiments using three different real-world datasets: the
Reuters RCV1 corpus~\cite{LewisYaRoLi04}, the Higgs detection
dataset~\cite{BaldiSaWh14}, and the Forest Cover dataset~\cite{Lichman13}.
Each represents a binary classification problem, which we formulate using
logistic regression (recall Sec.~\ref{sec:examples}). We briefly detail
relevant statistics of each:
\begin{enumerate}[(1)]
\item The Reuters RCV1 dataset consists of $N \approx 7.81 \cdot 10^5$ data
  vectors (documents) $a_i \in \{0, 1\}^d$ with $d \approx 5 \cdot 10^4$
  dimensions; each vector has sparsity approximately $\density = 3 \cdot
  10^{-3}$. Our task is to classify each document as being about corporate
  industrial topics (CCAT) or not.
\item The Higgs detection dataset consists of $N = 10^6$ data vectors
  $\wt{a}_i \in \R^{d_0}$, with $d_0 = 28$. We quantize each coordinate into
  5 bins containing equal fraction of the coordinate values and encode each
  vector $\wt{a}_i$ as a vector $a_i \in \{0, 1\}^{d}$ with $d = 5d_0$ whose
  non-zero entries correspond to quantiles into which coordinates fall. The
  task is to detect (simulated) emissions from a linear accelerator.
\item The Forest Cover dataset consists of $N \approx 5.7 \cdot 10^5$ data
  vectors $a_i \in \{-1, 1\}^d$ with $d = 54$, and the task is to predict
  forest growth types.
\end{enumerate}
Thus, each dataset gives a different flavor of optimization problem: the
first is very sparse and high-dimensional, the second is somewhat sparse and
of moderate dimension, while the forest dataset is dense but of relatively
small dimension. These allow a broader picture of the performance of the
asynchronous gradient method.

\begin{figure}[ht]
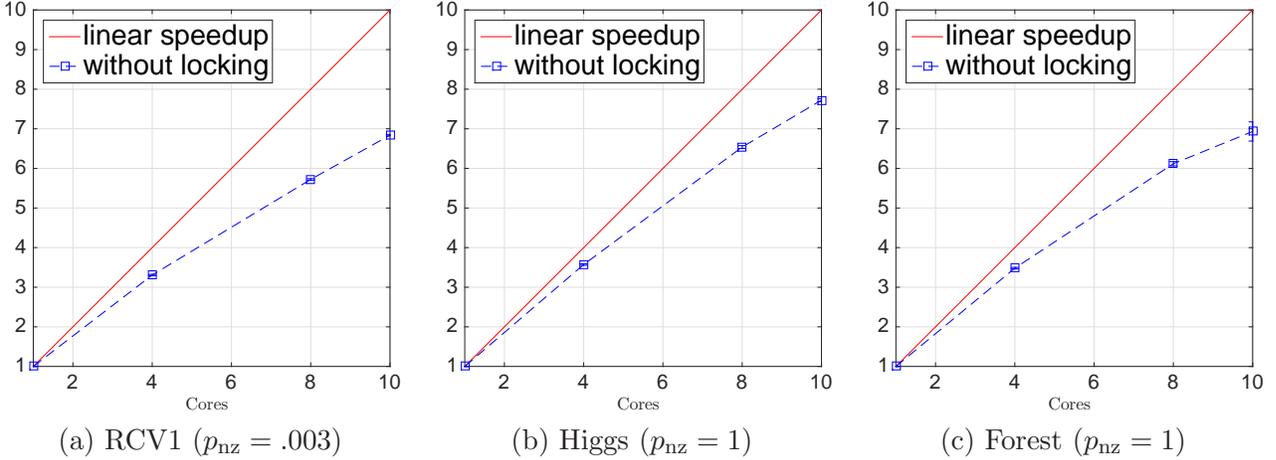

  \begin{center}
    \begin{tabular}{ccc}
      \hspace{-.1cm}
      \psfrag{optimalitygap}[bl][bl][.8]{Speedup}
      \psfrag{hardwareefficiency}{}
      \psfrag{\#cores}[bl][bl][.6]{Cores}
      \includegraphics[width=.32\columnwidth,trim={20 0 0 0},clip=true]{nips/figures/rcv1_hard_eff_decreasing_stepsizes_with_error_bars} &
      \hspace{-.1cm}
      \psfrag{hardwareefficiency}{}
      \psfrag{\#cores}[bl][bl][.6]{Cores}
      \psfrag{optimalitygap}[bl][bl][.6]{}
      \includegraphics[width=.32\columnwidth,trim={20 0 0 0},clip=true]{nips/figures/higgs_quantized_hard_eff_decreasing_stepsizes_with_error_bars} &
      \hspace{-.1cm}
      \psfrag{hardwareefficiency}{}
      \psfrag{optimalitygap}[bl][bl][.6]{}
      \psfrag{\#cores}[bl][bl][.6]{Cores}
      \includegraphics[width=.32\columnwidth,trim={20 0 0 0},clip=true]{nips/figures/forest_hard_eff_decreasing_stepsizes_with_error_bars}
      \\
      (a) RCV1 ($\density = .003$) & (b) Higgs ($\density = 1$)
      & (c) Forest ($\density = 1$)
    \end{tabular}
    \caption{\label{fig:real-hardware-efficiency-dec}
      (Decreasing stepsizes) Logistic regression experiments showing
      speedup~\eqref{eqn:speedup} on the (a) RCV1, (b) Higgs,
      and (c) Forest Cover datasets.}
  \end{center}
\end{figure}

We follow the same experimental protocol as in our simulated data
experiments.  That is, we perform 10 experiments for each dataset using 1,
4, 8, and 10 cores, where each experiment consists of running the
asynchronous gradient method for 20 epochs, within each of which examples
are accessed according to a new random permutation. We use a batch size
$\batchsize = 10$ for each experiment, and collect standard errors for the
(estimated) optimality gaps. In these experiments, as a proxy for the
optimal value $f(x\opt)$ we run a synchronous gradient method for 100
epochs, using its best objective value as $f(x\opt)$.  In
Figures~\ref{fig:real-statistical-efficiency-dec}
and~\ref{fig:real-statistical-efficiency}, we plot the gap $f(x_k) -
f(x\opt)$ as a function of epochs, giving standard error intervals, for each
of the three datasets. The figures show there is essentially no degradation
in objective value when using different numbers of processors, that is,
asynchrony appears to have negligble effect for each problem, whether we use
stepsizes $\stepsize_k = k^{-\steppow}$ (with $\steppow = .55$) that we
analyze or the epoch-based exponentially decreasing stepsize scheme used by
\citet{NiuReReWrNi11}, also analyzed by \citet{HazanKa11} and
\citet{GhadimiLa12}.  In Figures~\ref{fig:real-hardware-efficiency-dec}
and~\ref{fig:real-hardware-efficiency}, we plot speedup achieved for these
same experiments. The asynchronous gradient method iteration achieves nearly
linear speedup of between 6$\times$ and 8$\times$ on each of the datasets
using 10 cores.

\begin{figure}[ht]
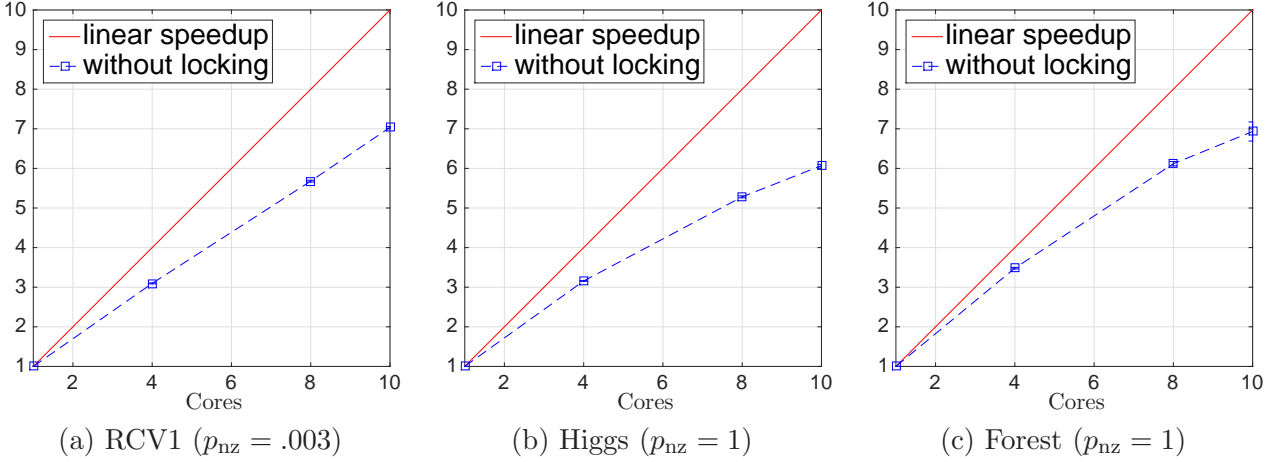

  \begin{center}
    \begin{tabular}{ccc}
      \hspace{-.1cm}
      \psfrag{optimalitygap}[bl][bl][.8]{Speedup}
      \psfrag{hardwareefficiency}{}
      \psfrag{\#cores}[bl][bl][.8]{Cores}
      \includegraphics[width=.32\columnwidth,trim={20 0 0 0},clip=true]{nips/figures/rcv1_hard_eff} &
      \hspace{-.1cm}
      \psfrag{hardwareefficiency}{}
      \psfrag{\#cores}[bl][bl][.8]{Cores}
      \psfrag{optimalitygap}[bl][bl][.8]{}
      \includegraphics[width=.32\columnwidth,trim={20 0 0 0},clip=true]{nips/figures/higgs_quantized_hard_eff} &
      \hspace{-.1cm}
      \psfrag{hardwareefficiency}{}
      \psfrag{optimalitygap}[bl][bl][.8]{}
      \psfrag{\#cores}[bl][bl][.8]{Cores}
      \includegraphics[width=.32\columnwidth,trim={20 0 0 0},clip=true]{nips/figures/forest_hard_eff}
      \\
      (a) RCV1 ($\density = .003$) & (b) Higgs ($\density = 1$)
      & (c) Forest ($\density = 1$)
    \end{tabular}
    \caption{\label{fig:real-hardware-efficiency} Exponentially decreasing
      stepsizes: Logistic regression experiments showing
      speedup~\eqref{eqn:speedup} on the (a) RCV1, (b) Higgs, and (c) Forest
      Cover datasets with epoch-based stepsize $\stepsize_{{\rm epoch}~k} =
      .95^k$.}
  \end{center}
\end{figure}

\section{Proofs}
\label{sec:proofs}

In this section, we present proofs of our two main theorems, deferring
the proofs of technical lemmas to subsequent appendices.

\subsection{Proof of Theorem~\ref{theorem:convex-optimization}}
\label{sec:proof-convex-optimization}

We prove Theorem~\ref{theorem:convex-optimization} by a reduction to
Theorem~\ref{theorem:nonlinear}.  For both settings of
Theorem~\ref{theorem:convex-optimization}, we will use $\lyap(x) = \half
\norm{x}^2$ and $\resid(x) = \nabla f(x)$, then apply
Theorem~\ref{theorem:nonlinear}. With these choices,
Assumption~\ref{assumption:residual-quadratic} is satisfied with the Hessian
$\hess = \nabla^2 f(x\opt) \succ 0$ and $\gamma = 1$ by a Taylor expansion
of $f$, which is assumed continuously twice differentiable near $x\opt$.

Let us also verify that Assumption~\ref{assumption:noise-fun} holds for
an appropriate noise sequence $\noise_k$ in the stochastic
convex optimization setting. Throughout, we
also use the sequence of $\sigma$-fields $\mc{F}_k$ defined by
expression~\eqref{eqn:filtration}, so that $x_k \in \mc{F}_{k-1}$, and the
counter $k$ implicitly gives a gradient $g_k$, point $x_k$, sample
$\statrv_k$, and noise error
\begin{equation}
  \noise_k = g_k - \resid(x_k) = \nabla F(x_k; \statrv_k) - \nabla f(x_k).
  \label{eqn:convex-noise}
\end{equation}
That is, we have $g_k = \resid(x_k) + \noise_k$ as in the nonlinear setting
of Theorem~\ref{theorem:nonlinear}.  We first verify that $\noise_k$ is a
martingale difference sequence: we have $\E[\noise_k \mid \mc{F}_{k-1}] =
\nabla f(x_k) - \nabla f(x_k) = 0$ because $\statrv_k$ is independent of
$x_k$. We also have the decomposition
\begin{equation*}
  \noise_k
  = \underbrace{\nabla F(x\opt; \statrv_k)}_{
    \noise_k(0)} + \underbrace{\nabla F(x_k; \statrv_k)
    - \nabla F(x\opt; \statrv_k) - \nabla f(x_k)}_{
    \noisier_k(x_k)},
\end{equation*}
and that $\norm{\nabla f(x_k)} = \norm{\nabla f(x_k) - \nabla f(x\opt)}
\le L \norm{x_k - x\opt}$, so
\begin{equation*}
  \E[\norms{\noisier_k(x_k)}^2 \mid \mc{F}_{k-1}]
  = \E[\norms{\nabla F(x_k; \statrv_k) - \nabla F(x\opt; \statrv_k)}^2
    \mid \mc{F}_{k-1}]
  + \norm{\nabla f(x_k)}^2
  \le C \norm{x_k - x\opt}^2
\end{equation*}
by inequality~\eqref{eqn:smooth-random-gradients}. Moreover, we have
\begin{equation*}
  \E[\noise_k(0) \noise_k(0)^\top \mid \mc{F}_{k-1}] = \E[\nabla F(x\opt;
    \statrv_k) \nabla F(x\opt; \statrv_k)^\top \mid \mc{F}_{k-1}] = \Sigma,
\end{equation*}
as the random variable $\statrv_k$ is independent of $\mc{F}_{k-1}$.
That is, Assumption~\ref{assumption:noise-fun} holds.

Now we show that Assumption~\ref{assumption:resid-strong-convex} holds
whenever Assumption~\ref{assumption:strong-convex} holds.
If $f$ is strongly convex, then
taking $\resid(x) = \nabla f(x)$ and $\lyap(x) = \half \norm{x}^2$
we have for any $x, y \in \R^d$ that
\begin{equation*}
  f(y) \ge f(x) + \<\nabla f(x), y - x\> + \frac{\lambda}{2} \norm{x - y}^2
  ~~~ \mbox{and} ~~~
  f(x) \ge f(y) + \<\nabla f(y), x - y\> + \frac{\lambda}{2} \norm{x - y}^2.
\end{equation*}
Taking $y = x\opt$ in the preceding expression while noting that $\nabla
f(x\opt) = \nabla f(y) = 0$, we have
\begin{align*}
  \<\nabla \lyap(x - x\opt), \resid(x)\>
  & = \<x - x\opt, \nabla f(x)\>
  = \<\nabla f(x) - \nabla f(x\opt), x - x\opt\>
  \ge \lambda \norm{x - x\opt}^2
  = 2\lambda \lyap(x - x\opt).
\end{align*}
Clearly, $\lyap$ has $1$-Lipschitz gradient and satisfies
$\lyap(x - x\opt) \ge \half \norm{x - x\opt}^2$.

Now we consider conditions on the convex function $f$ under which
Assumption~\ref{assumption:resid-lipschitz} is satisfied.  In particular,
let $f$ be a differentiable Lipschitz-continuous convex function defined on
$\R^d$, and assume that $f$ is locally strongly convex near
$x\opt$, meaning that there exist $\lambda > 0$ and $\epsilon > 0$ such that
\begin{equation*}
  f(y) \ge f(x) + \<\nabla f(x), y - x\> + \frac{\lambda}{2} \norm{x - y}^2
  ~~ \mbox{for~} x, y ~\mbox{s.t.} ~ \norm{x - x\opt} \le \epsilon,
  \norm{y - x\opt} \le \epsilon.
\end{equation*}
In particular, as $\nabla f(x\opt) = 0$, we have $f(x) \ge f(x\opt) +
\frac{\lambda}{2} \norm{x - x\opt}^2$ for all $x$ such that $\norm{x - x\opt}
\le \epsilon$.  We have the following lemma on the growth of such
functions.
\begin{lemma}
  \label{lemma:convex-grows}
  Let $f$ satisfy the conditions in the preceding paragraph.  Then
  \begin{equation}
    \label{eqn:convex-grows}
    f(x) \ge f(x\opt) + \frac{\lambda}{2}
    \min\{\norm{x - x\opt}^2, \epsilon \norm{x - x\opt}\}.
  \end{equation}
\end{lemma}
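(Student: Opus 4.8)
The plan is to deduce the global growth bound~\eqref{eqn:convex-grows} from the local strong convexity by splitting into the two cases $\norm{x - x\opt} \le \epsilon$ and $\norm{x - x\opt} > \epsilon$. The first case is immediate: we already noted that local strong convexity at the point $x\opt$ (where $\nabla f(x\opt) = 0$) gives $f(x) \ge f(x\opt) + \frac{\lambda}{2}\norm{x - x\opt}^2$ whenever $\norm{x - x\opt} \le \epsilon$, and since $\norm{x - x\opt}^2 = \min\{\norm{x - x\opt}^2, \epsilon \norm{x - x\opt}\}$ on this region, this is exactly the claimed bound. So the work is entirely in the far region.

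For the far case, fix $x$ with $\norm{x - x\opt} > \epsilon$ and consider the point $y$ on the segment from $x\opt$ to $x$ at distance exactly $\epsilon$ from $x\opt$; that is, $y = x\opt + \epsilon \frac{x - x\opt}{\norm{x - x\opt}}$, so that $y = (1 - t)x\opt + t x$ with $t = \epsilon / \norm{x - x\opt} \in (0,1)$. Convexity of $f$ gives $f(y) \le (1-t)f(x\opt) + t f(x)$, i.e. $f(x) \ge f(x\opt) + \frac{1}{t}\big(f(y) - f(x\opt)\big)$. Since $\norm{y - x\opt} = \epsilon \le \epsilon$, the local bound applies at $y$: $f(y) - f(x\opt) \ge \frac{\lambda}{2}\norm{y - x\opt}^2 = \frac{\lambda}{2}\epsilon^2$. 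Substituting $1/t = \norm{x - x\opt}/\epsilon$ yields
\begin{equation*}
  f(x) \ge f(x\opt) + \frac{\norm{x - x\opt}}{\epsilon} \cdot \frac{\lambda \epsilon^2}{2}
  = f(x\opt) + \frac{\lambda}{2} \epsilon \norm{x - x\opt},
\end{equation*}
and on this region $\epsilon \norm{x - x\opt} = \min\{\norm{x - x\opt}^2, \epsilon \norm{x - x\opt}\}$, giving the claim. Combining the two cases proves~\eqref{eqn:convex-grows}.

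I do not expect any serious obstacle here — the argument is a standard ``convexity transports a local lower bound outward along rays'' trick. The only point requiring a little care is making sure the interpolation point $y$ has norm exactly $\epsilon$ (not just at most $\epsilon$), so that the local strong-convexity inequality is legitimately invoked at $y$, and checking that the $\min$ in the statement correctly selects the quadratic term inside the ball and the linear term outside it. No appeal to the gradient Lipschitz condition or to differentiability beyond $\nabla f(x\opt)=0$ is needed.
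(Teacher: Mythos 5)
Your proof is correct and follows essentially the same route as the paper: both arguments restrict attention to the ray from $x\opt$ through $x$, apply the local bound $f(y) - f(x\opt) \ge \tfrac{\lambda}{2}\epsilon^2$ at the point $y$ at distance exactly $\epsilon$, and extend it outward by convexity (your convex-combination inequality $f(y) \le (1-t)f(x\opt) + t f(x)$ is the same fact as the paper's monotonicity of secant slopes of the one-dimensional restriction $h_y$). The case split and the identification of which term the $\min$ selects in each region match the paper's treatment.
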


Deferring proof of Lemma~\ref{lemma:convex-grows}, we show how it implies
that the conditions of Assumption~\ref{assumption:resid-lipschitz} are
satisfied with the Lyapunov function $\lyap(x) = \half \norm{x}^2$ and
residual operator $\resid(x) = \nabla f(x)$. Indeed, by applying the strong
convexity inequality with $y = x\opt$, we have for $x$ such that $\norm{x -
  x\opt} \le \epsilon$ that
\begin{equation*}
  \<\nabla \lyap(x - x\opt), \resid(x)\>
  = \<x - x\opt, \nabla f(x)\>
  = \<\nabla f(x) - \nabla f(x\opt), x - x\opt\>
  \ge \lambda \norm{x - x\opt}^2.
\end{equation*}
Now we claim that
\begin{equation}
  \label{eqn:lipschitz-lyap}
  \inf_{x : \norm{x - x\opt} > \epsilon}
  \<\nabla \lyap(x - x\opt), \resid(x)\> =
  \inf_{x : \norm{x - x\opt} > \epsilon}
  \<x - x\opt, \nabla f(x)\> > 0.
\end{equation}
To see this, note that
by claim~\eqref{eqn:convex-grows}, for $x$ such that
$\norm{x - x\opt} \ge \epsilon$, we have for some constant $c > 0$ that
$f(x) - f(x\opt) \ge c \norm{x - x\opt}$,
while we have $f(x\opt) \ge f(x) + \<\nabla f(x), x\opt - x\>$, so that
\begin{equation*}
  \<\nabla f(x), x - x\opt\>
  \ge f(x) - f(x\opt) \ge c \norm{x - x\opt} > c \hspace{.05em}\epsilon
  ~~ \mbox{for~all~} x ~\mbox{s.t.}~ \norm{x - x\opt} > \epsilon.
\end{equation*}
Additionally, whenever Assumption~\ref{assumption:lipschitz}
holds, we have
$\E[\norm{\nabla F(x; \statrv)}^2] \le \lipobj^2$,
so that all the conditions of Assumption~\ref{assumption:resid-lipschitz}
are satisfied. Except for the proof of Lemma~\ref{lemma:convex-grows},
this completes the proof of Theorem~\ref{theorem:convex-optimization}.

\begin{proof-of-lemma}[\ref{lemma:convex-grows}]
  Fix $y \in \R^d$ and let $h_y(t) = f(x\opt + t y / \norm{y}) - f(x\opt)$.
  Notably, $h_y$ is a one-dimensional convex function, and
  $h_y(t) \ge (\lambda/2) t^2$ for $|t| \le \epsilon$.
  As the slopes of convex functions are non-decreasing
  (cf.~\citet[Chapter I]{HiriartUrrutyLe93}), we have
  \begin{equation*}
    h_y'(\epsilon) = \lim_{\delta \to 0}
    \frac{h_y(\epsilon + \delta) - h_y(\epsilon)}{\delta}
    \ge \frac{h_y(\epsilon) - h_y(0)}{\epsilon}
    \ge \frac{\lambda \epsilon^2}{2 \epsilon}
    = \frac{\lambda \epsilon}{2}.
  \end{equation*}
  This inequality implies that for any $t \ge \epsilon$, we have
  \begin{equation*}
    \frac{f(x\opt + t \frac{y}{\norm{y}}) - f(x\opt)}{t}
    = \frac{h_y(t) - h_y(0)}{t}
    \ge \frac{h_y(\epsilon) - h_y(0)}{\epsilon}
    \ge \frac{\lambda \epsilon}{2},
  \end{equation*}
  while for $0 < t < \epsilon$, we use that $t \mapsto (h_y(t) - h_y(0))/t$ is
  non-decreasing in $t$ to obtain
  \begin{equation*}
    \frac{f(x\opt + t \frac{y}{\norm{y}}) - f(x\opt)}{t}
    = \frac{h_y(t) - h_y(0)}{t}
    \ge \frac{\lambda t^2}{2t} = \frac{\lambda t}{2}.
  \end{equation*}
  Combining the two preceding displays, we have
  $f(x\opt + t \frac{y}{\norm{y}})
  - f(x\opt) \ge \frac{\lambda t}{2} \min\left\{\epsilon, t\right\}$,
  which is equivalent to inequality~\eqref{eqn:convex-grows}.
\end{proof-of-lemma}

\subsection{Proof of Theorem~\ref{theorem:nonlinear}}
\label{sec:proof-nonlinear}

Before beginning the proof of the theorem proper, we state a martingale
convergence lemma necessary for our development, then give an outline of
the proof to come.
\begin{lemma}[Robbins and Siegmund~\cite{RobbinsSi71}]
  \label{lemma:robbins-siegmund}
  Let $\mc{F}_1 \subset \mc{F}_2 \subset \cdots$ be a filtration and
  $V_n, \beta_n, \kappa_n, \varepsilon_n$ be non-negative
  $\mc{F}_n$-measurable random variables such that
  \begin{equation*}
    \E[V_{n+1} \mid \mc{F}_n] \le (1 + \beta_n) V_n + \kappa_n - \varepsilon_n.
  \end{equation*}
  On the event that $\sum_{n=1}^\infty \beta_n < \infty$
  and $\sum_{n = 1}^\infty \kappa_n < \infty$, we have
  $V_n \cas V$ for a non-negative random variable $V$ with
  $V < \infty$ almost surely, and $\sum_{n=1}^\infty \varepsilon_n < \infty$
  a.s.
\end{lemma}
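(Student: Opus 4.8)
The plan is to reduce the stated ``almost supermartingale'' recursion to the convergence of an honest non-negative supermartingale, and to accommodate the fact that the summability of $\beta_n$ and $\kappa_n$ holds only on the random event $A \defeq \{\sum_n \beta_n < \infty\} \cap \{\sum_n \kappa_n < \infty\}$ by localizing with stopping times. First I would remove the multiplicative factor $(1+\beta_n)$ with an integrating factor: set $a_n = \prod_{k=1}^{n-1}(1+\beta_k)^{-1}$ (so $a_1 = 1$, each $a_n \in (0,1]$, and $a_n$ is $\mc{F}_{n-1}$-measurable), and define the non-negative process $Z_n = a_n V_n + \sum_{k=1}^{n-1} a_{k+1}\varepsilon_k$. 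Using $a_{n+1}(1+\beta_n) = a_n$ and taking conditional expectations in the hypothesis gives $\E[Z_{n+1}\mid\mc{F}_n] \le Z_n + a_{n+1}\kappa_n$, so $Z_n$ is a non-negative supermartingale up to the additive drift $a_{n+1}\kappa_n \ge 0$. The point of folding the running sum of $\varepsilon$ into $Z_n$ is that convergence of $Z_n$ will later deliver $\sum_n \varepsilon_n < \infty$ for free.

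Next I would localize to make the total drift bounded. For a positive integer $C$, let $\sigma_C = \inf\{n : \sum_{k=1}^n \kappa_k > C\}$, which is a stopping time since each partial sum is $\mc{F}_n$-measurable. Because $a_{k+1}\le 1$, the compensator $A_n \defeq \sum_{k=1}^{(n\wedge\sigma_C)-1} a_{k+1}\kappa_k$ is non-decreasing and bounded by $C$. Then $M_n \defeq Z_{n\wedge\sigma_C} - A_n$ is a genuine supermartingale with $M_n \ge -C$, so $M_n + C$ is a non-negative supermartingale and converges almost surely by the martingale convergence theorem; since $A_n \uparrow A_\infty \le C$ also converges, the stopped process $Z_{n\wedge\sigma_C}$ converges a.s. On $\{\sigma_C = \infty\}$ this says $Z_n$ itself converges a.s.

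Finally I would extract the two conclusions and patch the events together. On $\{\sigma_C=\infty\}$ the partial sums $\sum_{k=1}^{n-1} a_{k+1}\varepsilon_k$ are non-decreasing and dominated by $\sup_n Z_n < \infty$, hence converge, which forces both $\sum_k a_{k+1}\varepsilon_k < \infty$ and (by subtraction) the convergence of $a_n V_n$. Intersecting with $A$, the hypothesis $\sum_n \beta_n < \infty$ makes $\prod_k(1+\beta_k)$ converge to a finite positive limit, so $a_n \to a_\infty \in (0,1]$; dividing then gives $V_n \cas V$ for a finite non-negative $V$, and $a_{k+1}$ being eventually bounded below by $a_\infty/2 > 0$ upgrades $\sum_k a_{k+1}\varepsilon_k<\infty$ to $\sum_k \varepsilon_k < \infty$. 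Since every $\omega \in A$ satisfies $\sigma_C(\omega) = \infty$ for all large enough $C$, we have $A = \bigcup_{C \ge 1}\big(A \cap \{\sigma_C = \infty\}\big)$ up to null sets, and as the desired conclusions hold a.s. on each member of this countable union they hold a.s. on $A$.

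The main obstacle is conceptual rather than computational: the recursion is only an almost-supermartingale (the $+\kappa_n$ drift and the $(1+\beta_n)$ inflation both push $V_n$ upward) and, crucially, the summability that tames these terms is assumed only on the event $A$, not pathwise everywhere. The integrating factor $a_n$ neutralizes the multiplicative inflation for free, since $a_n \le 1$ regardless of the event, but controlling the additive drift and extracting a strictly positive limiting factor $a_\infty$ both genuinely require the event conditioning; the stopping-time decomposition $A = \bigcup_C \{\sigma_C=\infty\}$ is exactly the device that converts these event-restricted statements into a countable family of everywhere-valid non-negative supermartingale convergence assertions.
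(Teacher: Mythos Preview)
The paper does not prove this lemma; it merely states it with a citation to Robbins and Siegmund and then applies it as a black box. Your proposal is therefore not to be compared against any argument in the paper. That said, the proof you outline is correct and is essentially the classical Robbins--Siegmund argument: rescale by the integrating factor $a_n=\prod_{k<n}(1+\beta_k)^{-1}$ to kill the multiplicative inflation, absorb the $\varepsilon$-sum into the process so that its finiteness comes out of the supermartingale limit, compensate the additive $\kappa$-drift, and localize via the stopping times $\sigma_C$ so that on each $\{\sigma_C=\infty\}$ one has a genuine non-negative supermartingale. The final patching $A=\bigcup_C(A\cap\{\sigma_C=\infty\})$ and the recovery of $V_n\to V$ and $\sum_n\varepsilon_n<\infty$ from $a_n\to a_\infty\in(0,1]$ are exactly right. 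One very minor tightening: since $a_n$ is non-increasing you in fact have $a_{k+1}\ge a_\infty$ for all $k$, not just eventually, so the ``$a_\infty/2$'' is unnecessary.
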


We prove Theorem~\ref{theorem:nonlinear} by relating the sequence $x_k$ from
expression~\eqref{eqn:iterate-formula} to a sequence whose performance is
somewhat easier to analyze, and which has values more closely approximating
a ``correct'' stochastic gradient iteration: we define
\begin{equation}
  \label{eqn:ideal-iteration}
  \wt{x}_k \defeq -\sum_{i=1}^{k-1} \stepsize_i g_i
  ~~~ \mbox{and} ~~~
  \wt{\error}_k \defeq \wt{x}_k - x\opt.
\end{equation}
With this iteration, we have that $\wt{x}_k \in \mc{F}_{k-1}$, where
$\mc{F}_k$ is the $\sigma$-field defined in
expression~\eqref{eqn:filtration}, and (we show) it is close enough to the
correct iterates $x_k$ to give our desired results. The idea of analyzing a
corrected sequence for distributed iterations builds out of \citet[Chapter
  7.8]{BertsekasTs89} and has been
used~\cite[e.g.][]{DuchiAgWa12} for distributed and parallel optimization
problems.

Because delays may grow unboundedly under
Assumption~\ref{assumption:delay-moments}, we consider the effects of delay
increasing polynomially with $n$. With this in mind, for the proof and all
internal lemmas, we let $\npow$ be a fixed constant satisfying
\begin{equation}
  \label{eqn:delay-moment-interval}
  \frac{1}{\delmoment - 1} < \npow < \steppow - \frac{1}{1 + \gamma},
\end{equation}
where the stepsizes $\stepsize_k = \stepsize k^{-\steppow}$, $\delmoment$ is
the moment in Assumption~\ref{assumption:delay-moments}, and $\gamma \in
\openleft{0}{1}$ is the power in
Assumption~\ref{assumption:residual-quadratic}. The
interval~\eqref{eqn:delay-moment-interval} is assumed to be non-empty by the
conditions of the theorem. Note that this implies that $\npow \in
(\frac{1}{\delmoment - 1}, \steppow - \half)$. We will show that $n^\npow$
functions as a bound on the delays in incorporating gradient information.

\paragraph{Outline of proof}
We provide a brief outline before giving the remainder of the proof.
First, we show that there is (asymptotically) a finite bound such that the
delays $\delay_n$ on asynchronous updates are at most of order $n^\npow$ by
Assumption~\ref{assumption:delay-moments} (Lemmas~\ref{lemma:delays-stop}
and~\ref{lemma:crazy-ratios}).  Then we show that the ``corrected'' sequence
$\wt{x}_k$ (and $\wt{\error}_k$) converges appropriately
(Lemma~\ref{lemma:p1-convergence}), assuming that the true errors $\error_k$
do not diverge, giving almost sure convergence of $\wt{\error}_k$ using the
Robbins-Siegmund martingale convergence theorem
(Lemma~\ref{lemma:robbins-siegmund}).  We use these results to show that
$\error_k$, $\wt{\error}_k$, and $\wt{\error}_k'$, where $\wt{\error}_k'$ is
defined by the simpler linear matrix iteration $v_{k + 1} = (I - \stepsize_k
\hess) v_k - \stepsize_k \noise_k$ with $\wt{\error}_k' = v_k - x\opt$, are
all asymptotically equivalent in probability
(Lemmas~\ref{lemma:wrong-is-right}, \ref{lemma:corrected-errors-matrices},
and~\ref{lemma:corrected-errors-matrices-lipschitz}), as long as the errors
$\error_k$ are assumed to stay bounded. In particular, the differences
$\norms{\error_k - \wt{\error}_k}^2 = O_P(\stepsize_k^2 k^{2 \npow})$, that
is, they scale quadratically in the stepsize $\stepsize_k$ and with some
penalty for delays, and the errors tend to zero as long as $\npow$ is not
too large; asynchrony is dominated by the magnitude of observed
gradient noise.  Asymptotic normality
(Lemma~\ref{lemma:corrected-sequence-normal}) of the equivalent
sequences $\error_k, \wt{\error}_k, \wt{\error}_k'$ then follows from
results of \citet{PolyakJu92}, which guarantee a central limit theorem for
the sequence $\wt{\error}_k'$, because the error bounds on $\noisier$ of
Assumption~\ref{assumption:noise-fun} guarantee that $\noise_k$ eventually
behaves like an i.i.d.\ sequence.  Lastly (in
Lemma~\ref{lemma:smaller-than-past}), we show that our overarching
assumption---that the true errors $\error_k$ did not diverge---in fact holds
under the assumptions of the theorem.

We now turn to the proof of Theorem~\ref{theorem:nonlinear} proper.

\begin{lemma}
  \label{lemma:delays-stop}
  Let $\npow > \frac{1}{\delmoment - 1}$ and
  $\event_n$ be the event that $\errmat^{nk} \neq I_{d \times d}$ for some
  $k \le n - n^\npow$. Then
  \begin{equation*}
    \P(\event_n ~ \mbox{occurs~infinitely~often}) = 0.
  \end{equation*}
\end{lemma}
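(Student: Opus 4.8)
The plan is to recast the event $\event_n$ in terms of the delay variables $\delay_k$ and then apply the Borel--Cantelli lemma. First I would observe that the indicator matrices are monotone in their first index: the map $l \mapsto \errmat^{lk}$ is coordinatewise nondecreasing, since the updates only ever subtract $\stepsize_i [g_i]_j$ from coordinate $j$, so once a coordinate of the $k$th gradient has been incorporated it stays incorporated. (This monotonicity also underlies the fact that $\delay_k < \infty$ almost surely under Assumption~\ref{assumption:delay-moments}.) Consequently $\errmat^{nk} = I_{d \times d}$ whenever $n \ge k + \delay_k$, so $\errmat^{nk} \ne I_{d \times d}$ forces $\delay_k > n - k$. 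On $\event_n$ there is some $k \le n - n^\npow$ with $\errmat^{nk} \ne I_{d \times d}$, and for such a $k$ we have $n - k \ge n^\npow$, hence $\delay_k > n - k \ge n^\npow$. Re-indexing by $j = n - k$, this yields the inclusion
\begin{equation*}
  \event_n \subseteq \bigcup_{j = \lceil n^\npow \rceil}^{n-1} \left\{\delay_{n-j} > j\right\}.
\end{equation*}

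Next I would bound $\P(\event_n)$ by a union bound combined with Markov's inequality for the $\delmoment$th moment. By Assumption~\ref{assumption:delay-moments}, $\E[\delay_{n-j}^\delmoment] \le \delay^\delmoment$ uniformly in $j$ and $n$, so $\P(\delay_{n-j} > j) \le \delay^\delmoment j^{-\delmoment}$, and summing the tail of this $\delmoment$-series (using $\delmoment > 2 > 1$) gives a constant $C_\delmoment < \infty$ depending only on $\delmoment$ with
\begin{equation*}
  \P(\event_n) \le \sum_{j \ge \lceil n^\npow \rceil} \delay^\delmoment j^{-\delmoment} \le C_\delmoment \,\delay^\delmoment\, n^{-\npow(\delmoment - 1)}.
\end{equation*}
The point of summing over $j = n - k \ge n^\npow$, rather than union-bounding over all $k \le n - n^\npow$ with the cruder estimate $\P(\delay_k > n^\npow) \le \delay^\delmoment n^{-\npow\delmoment}$, is that the latter loses a spurious factor of $n$ and would demand the stronger condition $\npow > 2/\delmoment$; the tail-sum version is exactly sharp enough for the hypothesis at hand.

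Finally, since $\npow > \frac{1}{\delmoment - 1}$ is precisely the inequality $\npow(\delmoment - 1) > 1$, the bound above is summable: $\sum_{n=1}^\infty \P(\event_n) \le C_\delmoment \delay^\delmoment \sum_{n=1}^\infty n^{-\npow(\delmoment - 1)} < \infty$, and the Borel--Cantelli lemma gives $\P(\event_n \text{ occurs infinitely often}) = 0$. The only genuinely nontrivial step is the first one---justifying $\errmat^{nk} \ne I_{d\times d} \Rightarrow \delay_k > n - k$ via monotonicity of the incorporation indicators and the definition of $\delay_k$; everything afterward is a routine union bound plus Borel--Cantelli, with the threshold $\npow > \frac{1}{\delmoment - 1}$ arising exactly from the convergence of $\sum_n n^{-\npow(\delmoment-1)}$.
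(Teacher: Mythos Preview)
Your proof is correct and follows essentially the same approach as the paper: translate $\errmat^{nk} \neq I$ into a delay bound $\delay_k \ge n - k + 1$, union-bound over $k$, apply Markov's inequality with the $\delmoment$th moment, evaluate the resulting tail sum $\sum_{j \ge n^\npow} j^{-\delmoment} \lesssim n^{-\npow(\delmoment - 1)}$, and finish with Borel--Cantelli. Your explicit discussion of the monotonicity of $l \mapsto \errmat^{lk}$ makes rigorous a step the paper takes for granted, and your remark about why the cruder bound $\P(\delay_k > n^\npow)$ is insufficient is accurate but not in the paper; otherwise the arguments are identical.
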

\begin{proof}
  We have that
  $\errmat^{nk} \neq I$ if and only if $\delay_k \ge n - k + 1$, so that
  \begin{equation*}
    \P(I \neq \errmat^{nk})
    \le \P(\delay_k \ge n - k + 1)
    \le \frac{\E[\delay_k^\delmoment]}{(n - k + 1)^\delmoment}
    \le \frac{\delay^\delmoment}{(n - k + 1)^\delmoment}.
  \end{equation*}
  Letting $\event_n$ be the event that
  $I \neq \errmat^{nk}$ for some $k \le n - n^\npow$ as in the
  statement of the lemma,
  \begin{align*}
    \P(\event_n) & = \P(M_k \ge n - k + 1 ~ \mbox{for~some~} k \le n-n^\npow)
    \le \sum_{k=1}^{n - n^\npow} \frac{\delay^\delmoment}{
      (n - k + 1)^\delmoment} \\
    & = \sum_{k = n^\npow + 1}^n \frac{\delay^\delmoment}{k^\delmoment}
    \lesssim \int_{n^\npow}^n t^{-\delmoment} dt
    \lesssim (n^\npow)^{1 - \delmoment}
    = n^{\npow(1 - \delmoment)}.
  \end{align*}
  Thus we find that
  \begin{equation*}
    \sum_{n = 1}^\infty \P(\event_n)
    \lesssim \sum_{n = 1}^\infty \frac{1}{n^{\npow(\delmoment - 1)}}
    \stackrel{(i)}{<} \infty,
  \end{equation*}
  where inequality~(i) holds if and only if
  $\npow (\delmoment - 1) > 1$, or
  $\npow > \frac{1}{\delmoment - 1}$. Applying the Borel-Cantelli lemma
  gives the result.
\end{proof}

As an immediate consequence of this lemma, we obtain the following.
\begin{lemma}
  \label{lemma:crazy-ratios}
  Let $\stepsize_k = \stepsize k^{-\steppow}$, where $\steppow \in
  \openright{0}{1}$. For any $\npow \in (\frac{1}{\delmoment - 1}, 1)$, 
  with probability $1$ we have
  \begin{equation*}
    \limsup_n \sup_{z \in \R_+^n} \frac{\sum_{k=1}^n
      \norm{I - \errmat^{nk}} z_k}{
      \sum_{k = n - n^\npow}^n z_k}
    \le 1
    ~~ \mbox{and} ~~
    \sup_n \sup_{z \in \R^n_+} \frac{\sum_{k=1}^n \stepsize_k
      \norm{I - \errmat^{nk}} z_k}{\stepsize_n \sum_{k = n - n^\npow}^n
      z_k} < \infty,
  \end{equation*}
  where we treat $0 / 0 = 1$.
\end{lemma}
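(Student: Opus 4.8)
The plan is to read this off Lemma~\ref{lemma:delays-stop}, which carries all of the probabilistic content; everything that remains is deterministic bookkeeping on a fixed sample path. Two elementary observations drive the argument. First, each $\errmat^{nk}$ is a diagonal $0$--$1$ matrix, hence so is $I - \errmat^{nk}$, so $\norm{I - \errmat^{nk}} \le 1$, with $\norm{I - \errmat^{nk}} = 0$ precisely when $\errmat^{nk} = I$. Second, the stepsizes $\stepsize_k = \stepsize k^{-\steppow}$ are non-increasing in $k$.

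Now fix the probability-one event furnished by Lemma~\ref{lemma:delays-stop} (applicable because $\npow > \frac{1}{\delmoment - 1}$): on this event there is a finite random index $N$ such that for every $n \ge N$ and every $k \le n - n^\npow$ we have $\errmat^{nk} = I$, i.e.\ $I - \errmat^{nk} = 0$. Consequently, for $n \ge N$ the only indices $k \in \{1, \dots, n\}$ that contribute to $\sum_{k=1}^n \norm{I - \errmat^{nk}} z_k$ are those with $k > n - n^\npow$, and every such $k$ lies in the summation range $\{\lceil n - n^\npow\rceil, \dots, n\}$ appearing in the denominators. Since $\norm{I - \errmat^{nk}} \le 1$ and $z_k \ge 0$, this gives
\[
  \sum_{k=1}^n \norm{I - \errmat^{nk}} z_k \le \sum_{k = n - n^\npow}^n z_k
  \qquad \text{for all } n \ge N,\ z \in \R^n_+,
\]
so the first ratio is $\le 1$ for all $n \ge N$, whence $\limsup_n \sup_z(\cdot) \le 1$. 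For the weighted ratio, the same support reduction together with monotonicity of $\stepsize_k$ yields, for $n \ge N$,
\[
  \sum_{k=1}^n \stepsize_k \norm{I - \errmat^{nk}} z_k
  \le \stepsize_{\lceil n - n^\npow\rceil} \sum_{k = n - n^\npow}^n z_k ,
\]
and dividing through by $\stepsize_n \sum_{k=n-n^\npow}^n z_k$ bounds it by $\stepsize_{\lceil n - n^\npow\rceil}/\stepsize_n = \big(n / \lceil n - n^\npow\rceil\big)^\steppow \le (1 - n^{\npow - 1})^{-\steppow}$. Since $\npow < 1$ forces $n^{\npow - 1} \to 0$, this quantity is bounded in $n$ (and in fact tends to $1$), so the weighted ratio is uniformly bounded for $n \ge N$; together with the finitely many remaining indices $n < N$ this yields the stated finiteness (and in particular both suprema have finite $\limsup$, the unweighted one bounded by $1$).

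The main --- and essentially only --- obstacle has already been handled by Lemma~\ref{lemma:delays-stop}: once one knows the asynchrony delays are eventually below $n^\npow$, both quantities collapse to comparing a nonnegative sum over the last $\approx n^\npow$ indices with itself, up to a stepsize factor. The only points requiring a little care are the floor/ceiling bookkeeping around $n - n^\npow$ in aligning the numerator's support with the denominator's index range, and checking that the ratio $\stepsize_{\lceil n - n^\npow\rceil}/\stepsize_n$ stays bounded; both are routine because $\stepsize_k = \stepsize k^{-\steppow}$ varies slowly and $\npow < 1$.
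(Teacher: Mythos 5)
Your proposal is correct and follows essentially the same route as the paper: invoke Lemma~\ref{lemma:delays-stop} to get a random $N$ past which the numerator's support collapses to $k > n - n^\npow$, bound $\norm{I - \errmat^{nk}}$ by $1$, and control the stepsize ratio via $(1 - n^{\npow-1})^{\steppow} \to 1$. Your explicit handling of the finitely many indices $n < N$ is the same (slightly glossed) step the paper disposes of with ``the limit supremum is finite, so the supremum must likewise be finite.''
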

\begin{proof}
  The first statement of the lemma follows from
  Lemma~\ref{lemma:delays-stop}, as with probability $1$ over
  the delays $\delay_k$ and delay matrices $\errmat^{nk}$, there exists
  some (random) $N$ such that $n \ge N$ implies that
  $\errmat^{nk} = I$ for all $k \le n - n^\npow$, so that
  for $n \ge N$, we have for all nonnegative sequences $z_1, z_2, \ldots$
  that
  \begin{equation*}
    \frac{\sum_{k = 1}^n \norm{I - \errmat^{nk}} z_k}{
      \sum_{k = n - n^\npow}^n z_k}
    \le \frac{\sum_{k = n - n^\npow}^n z_k}{
      \sum_{k = n - n^\npow}^n z_k} = 1.
  \end{equation*}
  The second follows from the first once we note that
  for $k \in [n - n^\npow, n]$, we have
  \begin{equation*}
    1 \ge \frac{\stepsize_n}{\stepsize_k}
    \ge n^{-\steppow} (n - n^\npow)^\steppow
    = (1 - n^{\npow - 1})^\steppow
    = \exp(-\steppow n^{\npow - 1})(1 + o(1)) \to 1
  \end{equation*}
  as $n \to \infty$. The limit supremum is finite, so the supremum
  must likewise be finite.
\end{proof}

In particular, Lemma~\ref{lemma:crazy-ratios} implies that any sequence
$\stepsize_n \sum_{k=1}^n \stepsize_k \norm{I - \errmat^{nk}} Z_k$ cannot
diverge more quickly than $\stepsize_n^2 \sum_{k = n - n^\npow}^n Z_k$.  We
will use this fact frequently. For the remainder of the proof of
Theorem~\ref{theorem:nonlinear}, we define the random variable (implicitly
depending on the power $\npow$ chosen in the
interval~\eqref{eqn:delay-moment-interval})
\begin{equation}
  \label{eqn:ratio-def}
  \badratio_n \defeq \max_{m \le n}
  \sup_{z \in \R^n_+} \frac{\sum_{k = 1}^m \stepsize_k
    \norm{I - \errmat^{nk}} z_k}{\stepsize_m \sum_{k = m - m^\npow}^m
    z_k}
  ~~ \mbox{and} ~~
  \badratio_{\infty} \defeq \limsup_n \badratio_n.
\end{equation}
As $\badratio_n$ are non-decreasing, we have $\badratio_\infty = \lim_n
\badratio_n = \sup_n \badratio_n$, and by Lemma~\ref{lemma:crazy-ratios}, we
see that with probability $1$ over the delay process, we have
$\badratio_\infty < \infty$, and moreover, we have $\badratio_n \in
\mc{F}_{n-1}$ by definition~\eqref{eqn:filtration} of the $\sigma$-fields
$\mc{F}_k$.  For $t \in \R$ define
\begin{equation}
  \label{eqn:ratio-event-def}
  \badratioevent_{n,t} \defeq \{\badratio_n \le t\}
  ~~ \mbox{and} ~~
  \badratioevent_{\infty, t} \defeq \bigcap_n \badratioevent_{n,t}
  = \left\{\sup_n \badratio_n \le t \right\}
\end{equation}
be the events that $\badratio_n$ and $\badratio_\infty$ are bounded by $t$,
respectively, noting that $\badratioevent_{n,t} \in \mc{F}_{n-1}$ as
$\badratio_n \in \mc{F}_{n-1}$ as before. Then
Lemma~\ref{lemma:crazy-ratios}
implies
\begin{equation*}
  \lim_{t \to \infty} \P(\cap_{n \ge 1} \badratioevent_{n,t})
  = \lim_{t \to \infty} \P(\badratioevent_{\infty,t})
  = \lim_{t \to \infty} \P\left(\sup_n \badratio_n \le t\right) = 1.
\end{equation*}

Our first lemma, whose proof we provide in
Sec.~\ref{sec:proof-p1-convergence}, builds off of the Robbins-Siegmund
martingale convergence theorem (Lemma~\ref{lemma:robbins-siegmund})
to give an almost sure convergence result for the corrected
sequence $\wt{\error}_n$.
\begin{lemma}
  \label{lemma:p1-convergence}
  Let Assumptions~\ref{assumption:residual-quadratic}
  and~\ref{assumption:delay-moments} hold and the
  stepsizes $\stepsize_k = \stepsize k^{-\steppow}$.
  \begin{enumerate}[(a)]
  \item If Assumption~\ref{assumption:resid-strong-convex} holds,
    let $t < \infty$ and assume additionally that
    $\sup_n \E[\indic{\badratioevent_{n,t}} \norms{\error_n}^2] < \infty$.
    Then there is a finite random variable $\lyap_t$
    such that $\indic{\badratioevent_{n,t}}
    \lyap(\wt{\error}_n) \cas \lyap_t$
    and
    \begin{equation*}
      \sum_{n = 1}^\infty \stepsize_n \indic{\badratioevent_{n,t}}
      \<\nabla \lyap(\wt{\error}_n), \resid(\wt{x}_n)\> < \infty.
    \end{equation*}
  \item
    If Assumption~\ref{assumption:resid-lipschitz} holds, there is a finite
    random variable $\lyap$ such that $\lyap(\wt{\error}_n) \cas \lyap$, and
    \begin{equation*}
      \sum_{n = 1}^\infty \stepsize_n
      \<\nabla \lyap(\wt{\error}_n), \resid(\wt{x}_n)\>
      < \infty.
    \end{equation*}
  \end{enumerate}
\end{lemma}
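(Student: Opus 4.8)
The plan is to set up a Robbins--Siegmund recursion for the Lyapunov values $\lyap(\wt{\error}_n)$ (on the event $\badratioevent_{n,t}$ in part (a), unconditionally in part (b)), treating the asynchrony as a controlled perturbation of an otherwise-standard stochastic approximation step. Recall $\wt{x}_{n+1} = \wt{x}_n - \stepsize_n g_n$ with $g_n = \resid(x_n) + \noise_n$, so $\wt{\error}_{n+1} = \wt{\error}_n - \stepsize_n \resid(x_n) - \stepsize_n \noise_n$. Using $\lyap(\cdot) \le \frac{L}{2}\norm{\cdot}^2$ and the $L$-Lipschitz gradient of $\lyap$, expand
\begin{equation*}
  \lyap(\wt{\error}_{n+1}) \le \lyap(\wt{\error}_n)
  - \stepsize_n \<\nabla\lyap(\wt{\error}_n), \resid(x_n) + \noise_n\>
  + \frac{L}{2}\stepsize_n^2 \norm{\resid(x_n) + \noise_n}^2.
\end{equation*}
First I would rewrite the drift term using the true corrected iterate: $\<\nabla\lyap(\wt{\error}_n),\resid(x_n)\> = \<\nabla\lyap(\wt{\error}_n),\resid(\wt{x}_n)\> + \<\nabla\lyap(\wt{\error}_n),\resid(x_n)-\resid(\wt{x}_n)\>$. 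The first piece is the ``good'' negative drift we want to sum; the second is an error term controlled by $\norm{\nabla\lyap(\wt{\error}_n)}\,\norm{\resid(x_n)-\resid(\wt{x}_n)}$, and since $x_n - \wt{x}_n = \sum_i \stepsize_i(I-\errmat^{ni})g_i$, Lemma~\ref{lemma:crazy-ratios} / the ratio variable $\badratio_n$ of \eqref{eqn:ratio-def} lets me bound $\norm{x_n-\wt{x}_n} \lesssim \badratio_n \stepsize_n \sum_{k=n-n^\npow}^n \norm{g_k}$ on $\badratioevent_{n,t}$. I would then take conditional expectations given $\mc{F}_{n-1}$: the martingale term $-\stepsize_n\<\nabla\lyap(\wt{\error}_n),\noise_n\>$ vanishes, and the quadratic term splits via $\E[\norm{\resid(x_n)+\noise_n}^2\mid\mc{F}_{n-1}] \le 2\norm{\resid(x_n)}^2 + 2\E[\norm{\noise_n}^2\mid\mc{F}_{n-1}]$. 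Under Assumption~\ref{assumption:resid-strong-convex}, $\<\nabla\lyap(\wt{\error}_n),\resid(\wt{x}_n)\> \ge \lambda_0\lyap(\wt{\error}_n) \ge 0$; under Assumption~\ref{assumption:resid-lipschitz} it is nonnegative (and $\ge \lambda_0 \lyap$ locally) and $\norm{\resid},\E[\norm{\noise_n}^2\mid\mc F_{n-1}]$ are bounded, which is the easier case.

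The goal is to reach an inequality of Robbins--Siegmund form $\E[V_{n+1}\mid\mc F_n] \le (1+\beta_n)V_n + \kappa_n - \varepsilon_n$ with $V_n = \indic{\badratioevent_{n,t}}\lyap(\wt{\error}_n)$ (using $\badratioevent_{n,t}\in\mc F_{n-1}$ and $\badratioevent_{n+1,t}\subset\badratioevent_{n,t}$ so the indicator only helps), $\varepsilon_n = \stepsize_n\indic{\badratioevent_{n,t}}\<\nabla\lyap(\wt{\error}_n),\resid(\wt{x}_n)\>$, and $\beta_n,\kappa_n$ collecting the quadratic-in-$\stepsize$ and cross terms. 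The key point is that every error term is $O(\stepsize_n^2)$ times something that either is bounded (part (b)) or scales like $\norm{\error_k}^2$-type quantities near $x\opt$ (part (a)); using Assumption~\ref{assumption:residual-quadratic} and the Lipschitz bound on $\resid$ one gets $\norm{\resid(x_n)}^2 \lesssim \lyap(\error_n)+1$, and the assumed uniform bound $\sup_n\E[\indic{\badratioevent_{n,t}}\norms{\error_n}^2]<\infty$ (part (a)) together with $\sum_n \stepsize_n^2 n^{2\npow} < \infty$ — which holds since $2\steppow - 2\npow > 1$ as $\npow < \steppow - \half$ from \eqref{eqn:delay-moment-interval} — forces $\sum_n \E[\kappa_n] < \infty$, hence $\sum_n\kappa_n < \infty$ a.s. Similarly $\sum_n\beta_n<\infty$. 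Then Lemma~\ref{lemma:robbins-siegmund} immediately gives $V_n \cas V$ for a finite limit and $\sum_n\varepsilon_n<\infty$ a.s., which is exactly the conclusion. For part (b) one must additionally translate $\lyap(\wt\error_n)\to$ a finite limit plus $\sum_n\stepsize_n\<\nabla\lyap(\wt\error_n),\resid(\wt x_n)\><\infty$ into a.s.\ boundedness of $\norm{\wt\error_n}$ so that the local strong-convexity part of Assumption~\ref{assumption:resid-lipschitz} is actually exercised — but since here $\resid$ and the noise are globally bounded, $\norm{x_n-\wt x_n}$ is controlled with no a priori bound on $\error_n$ needed, and the $\badratioevent_{n,t}$ indicators can be dropped after taking $t\to\infty$.

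The main obstacle I anticipate is the careful bookkeeping of the asynchrony error $\<\nabla\lyap(\wt\error_n),\resid(x_n)-\resid(\wt x_n)\>$ and the resulting need to bound $\norm{x_n-\wt x_n}$ in a way that (i) uses only $\mc F_{n-1}$-measurable quantities when conditioning, (ii) respects the event $\badratioevent_{n,t}$, and (iii) yields a summable-in-expectation bound rather than merely an a.s.-finite one (the latter would not suffice to invoke Robbins--Siegmund cleanly). Concretely, $\norm{x_n-\wt x_n}^2 \lesssim \badratio_n^2\stepsize_n^2 \big(\sum_{k=n-n^\npow}^n \norm{g_k}\big)^2 \le \badratio_n^2\stepsize_n^2 n^\npow \sum_{k=n-n^\npow}^n\norm{g_k}^2$ by Cauchy--Schwarz, and one must then bound $\E[\norm{g_k}^2\mid\mc F_{n-1}]$ uniformly over the window $k\in[n-n^\npow,n]$ using the second-moment control on $\noise$ and $\resid$ together with the standing boundedness hypothesis on $\error_k$ — the delicate part being that this window reaches back to indices before $n$, so one needs the uniform-in-$k$ bound $\sup_k\E[\indic{\badratioevent_{k,t}}\norm{\error_k}^2]<\infty$ exactly as hypothesized, and must check the filtration/measurability is compatible (which it is, since $g_k\in\mc F_k\subset\mc F_{n-1}$ for $k<n$). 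Everything else is routine: matching the $\stepsize_n^2 n^{2\npow}$ tail against the stepsize-exponent constraint \eqref{eqn:delay-moment-interval}, and applying Lemma~\ref{lemma:robbins-siegmund}.
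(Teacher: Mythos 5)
Your proposal is correct and follows essentially the same route as the paper's proof: a Lipschitz expansion of $\lyap(\wt{\error}_{n+1})$, splitting the drift at $\resid(\wt{x}_n)$ versus $\resid(x_n)$, controlling $\norm{x_n - \wt{x}_n}$ through $\badratio_n$ on the event $\badratioevent_{n,t}$, verifying summability of the $O(\stepsize_n^2 n^{\npow})$ error terms via the moment bound on $\error_k$ (the paper's Lemma~\ref{lemma:errors-not-so-bad}), and invoking Robbins--Siegmund with exactly the $V_n$, $\varepsilon_n$, $\beta_n$, $\kappa_n$ you describe. The only cosmetic difference is that your Cauchy--Schwarz bound yields $\stepsize_n^2 n^{2\npow}$ where the paper keeps $\stepsize_n^2 n^{\npow}$, but both are summable under the stepsize constraint.
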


We can now verify that $\wt{\error}_n \cas 0$ under the conditions of
Lemma~\ref{lemma:p1-convergence}. First, let
Assumption~\ref{assumption:resid-lipschitz} hold, and let $\epsilon > 0$ be
the radius for which $\<\nabla \lyap(x - x\opt), \resid(x)\> \ge \lambda_0
\lyap(x - x\opt)$ for $\norm{x - x\opt} \le \epsilon$.  With $c_0 \defeq
\inf_{x:\norm{x - x\opt} > \epsilon} \< \nabla \lyap(x - x\opt), \resid(x)\> >
0$, we have
\begin{equation*}
  \sum_{n = 1}^\infty 
  \stepsize_n \min\{\lambda_0 \lyap(\wt{\error}_n), c_0\}
  \le \sum_{n=1}^\infty \stepsize_n
  \<\nabla \lyap(\wt{\error}_n), \resid(\wt{x}_n)\>
  < \infty,
\end{equation*}
and $\lyap(\wt{\error}_n) \cas \lyap$ for some random variable $\lyap$.
If $\P(\lyap > 0) > 0$, there exist realizations of the randomness in the
problem such that $\lyap > 0$, and for such realizations there must
exist $\epsilon_0 > 0$ such that $\lyap(\wt{\error}_n) \ge \epsilon_0$
for all sufficiently large $n$ as $\lyap(\wt{\error}_n) \cas \lyap$;
this contradicts $\sum_{n=1}^\infty \stepsize_n = \infty$, so we must
have $\lyap = 0$ a.s.\ under Assumption~\ref{assumption:resid-lipschitz}.

Under the alternate Assumption~\ref{assumption:resid-strong-convex} and that
$\sup_n \E[\indic{\badratioevent_{n,t}} \norm{\error_n}^2] < \infty$ for all
$t$, we have
\begin{equation*}
  \sum_{n=1}^\infty \indic{\badratioevent_{n,t}}
  \stepsize_n \lambda_0 \lyap(\wt{\error}_n)
  \le \sum_{n=1}^\infty \indic{\badratioevent_{n,t}} \stepsize_n
  \<\nabla \lyap(\wt{\error}_n), \resid(\wt{x}_n)\>
  < \infty.
\end{equation*}
As $\sum_n \stepsize_n = \infty$, we must then have $\lyap(\wt{\error}_n)
\indic{\badratioevent_{n,t}} \cas 0$, as we know it converges to
something by Lemma~\ref{lemma:p1-convergence}.
Using that $\lim_{t \to \infty} \P(\badratioevent_{\infty, t})
= 1$ and $\badratioevent_{n,t} \supset \badratioevent_{n+1,t}$ for all $n$,
we find
\begin{equation*}
  \P(\lyap(\wt{\error}_n) \not \to 0)
  = \lim_{t \to \infty} \P(\badratioevent_{\infty,t}
  ~ \mbox{and} ~ \lyap(\wt{\error}_n) \not\to 0)
  \le \limsup_{t \to \infty} \P\left(\indic{\badratioevent_{n,t}}
  \lyap(\wt{\error}_n) \not\to 0\right) = 0
\end{equation*}
by the preceding discussion. In particular, we have
\begin{equation}
  \label{eqn:resid-cas-error}
  \lyap(\wt{\error}_n) \cas 0
\end{equation}
whenever the conditions of Lemma~\ref{lemma:p1-convergence} hold.

Now we show that the averages of $\wt{\error}_n$ and $\error_n$ are
asymptotically equivalent in distribution, and we have quantitative control
over this equivalence.  (See Section~\ref{sec:proof-wrong-is-right} for a
proof of this lemma.)
\begin{lemma}
  \label{lemma:wrong-is-right}
  In addition to the conditions of Lemma~\ref{lemma:p1-convergence}, assume
  that $\steppow > \frac{1}{\tau - 1} + \half$ and either (a)
  Assumption~\ref{assumption:resid-strong-convex} holds and the sequence
  $C_{\error,n,t}^2 = \max_{k \le n} \E[\indic{\badratioevent_{k,t}}
    \norm{\error_k}^2]$ satisfies $\sup_n C_{\error,n,t} < \infty$ for all
  $t \in \R$ or (b) Assumption~\ref{assumption:resid-lipschitz} holds.  In
  case (a), there
  exists a universal constant $C$ such that
  \begin{equation}
    \label{eqn:expectation-single-step-bound}
    \E\left[\indic{\badratioevent_{n,t}} \norms{\wt{\error}_n - \error_n}^2
      \right]
    \le C \, \stepsize_n^2 n^{2 \npow} t^2 (C_{\error,n-1,t}^2 + 1).
  \end{equation}
  Additionally,
  \begin{equation}
    \label{eqn:wrong-is-right}
    \sqrt{n}\left(\wb{\wt{\error}}_n - \wb{\error}_n\right)
    = \frac{1}{\sqrt{n}}
    \sum_{k=1}^n (\wt{\error}_k - \error_k)
    \cp 0,
    ~~~~
    \frac{1}{\sqrt{n}} \sum_{k=1}^n \norms{\wt{\error}_k - \error_k}
    \cp 0,
  \end{equation}
  and
  \begin{equation}
    \label{eqn:resid-cas-error-true}
    \lyap(\error_n) \cas 0.
  \end{equation}
\end{lemma}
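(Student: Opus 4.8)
The plan is to control the discrepancy $\wt{\error}_n-\error_n$ one step at a time on the truncation events $\badratioevent_{n,t}$, and then sum. Subtracting~\eqref{eqn:iterate-formula} from~\eqref{eqn:ideal-iteration} gives $\wt{\error}_n-\error_n=\wt{x}_n-x_n=-\sum_{i=1}^{n-1}\stepsize_i(I-\errmat^{ni})g_i$, where each $I-\errmat^{ni}$ is a diagonal $0$--$1$ matrix. Applying the triangle inequality and then the definition~\eqref{eqn:ratio-def} of $\badratio_n$ with the nonnegative sequence $z_k=\norm{g_k}$, on the event $\badratioevent_{n,t}$ (where $\badratio_n\le t$) we get
\begin{equation*}
  \norms{\wt{\error}_n-\error_n}
  \;\le\;\sum_{i=1}^{n-1}\stepsize_i\norm{I-\errmat^{ni}}\norm{g_i}
  \;\le\;\badratio_n\,\stepsize_{n-1}\!\!\sum_{k=(n-1)-(n-1)^\npow}^{n-1}\!\!\norm{g_k}
  \;\le\; t\,\stepsize_{n-1}\!\!\sum_{k=(n-1)-(n-1)^\npow}^{n-1}\!\!\norm{g_k}.
\end{equation*}
Since $\stepsize_{n-1}\asymp\stepsize_n$ and this sum has $\asymp n^\npow$ terms, Cauchy--Schwarz yields $\indic{\badratioevent_{n,t}}\norms{\wt{\error}_n-\error_n}^2\lesssim t^2\stepsize_n^2 n^\npow\sum_k\indic{\badratioevent_{n,t}}\norm{g_k}^2$, the sum over the same $\asymp n^\npow$ indices $k\le n-1$.

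To take expectations I would use three facts. First, $\badratio_n$ is nondecreasing in $n$, so $\badratioevent_{n,t}\subseteq\badratioevent_{k,t}$ and $\indic{\badratioevent_{n,t}}\le\indic{\badratioevent_{k,t}}$ for $k\le n$; second, $\badratioevent_{k,t}\in\mc{F}_{k-1}$; third, the conditional second moment bound $\E[\norm{g_k}^2\mid\mc{F}_{k-1}]\lesssim\norm{\error_k}^2+1+\E[\norms{\noise_k(0)}^2\mid\mc{F}_{k-1}]$, which follows from $g_k=\resid(x_k)+\noise_k$ with $\E[\noise_k\mid\mc{F}_{k-1}]=0$, the decomposition $\noise_k=\noise_k(0)+\noisier_k(x_k)$ and the bound $\E[\norms{\noisier_k(x_k)}^2\mid\mc{F}_{k-1}]\le C\norm{\error_k}^2$ of Assumption~\ref{assumption:noise-fun}, together with the at-most-linear growth of $\resid$ (exactly as verified for the convex case through~\eqref{eqn:smooth-random-gradients}; under Assumption~\ref{assumption:resid-lipschitz} one has the even simpler $\E[\norm{g_k}^2\mid\mc{F}_{k-1}]\lesssim 1$). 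With $B\defeq\E[\sup_k\E[\norms{\noise_k(0)}^2\mid\mc{F}_{k-1}]]<\infty$ (Assumption~\ref{assumption:noise-fun}), these give $\E[\indic{\badratioevent_{n,t}}\norm{g_k}^2]\le\E[\indic{\badratioevent_{k,t}}\E[\norm{g_k}^2\mid\mc{F}_{k-1}]]\lesssim 1+\E[\indic{\badratioevent_{k,t}}\norm{\error_k}^2]+B\lesssim C_{\error,n-1,t}^2+1$ for $k\le n-1$; summing over the $\asymp n^\npow$ indices reproduces~\eqref{eqn:expectation-single-step-bound}.

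For the almost sure statement~\eqref{eqn:resid-cas-error-true}, note $\npow<\steppow-\frac{1}{1+\gamma}$ forces $2(\npow-\steppow)<-\frac{2}{1+\gamma}\le-1$ (as $\gamma\le1$), so $\sum_n\stepsize_n^2 n^{2\npow}\asymp\sum_n n^{2(\npow-\steppow)}<\infty$. Using $\sup_n C_{\error,n,t}<\infty$ in case (a) (or the uniform bound on $\E[\norm{g_k}^2]$ in case (b)), estimate~\eqref{eqn:expectation-single-step-bound} gives $\sum_n\E[\indic{\badratioevent_{n,t}}\norms{\wt{\error}_n-\error_n}^2]<\infty$, hence $\indic{\badratioevent_{n,t}}\norms{\wt{\error}_n-\error_n}\cas 0$ for each fixed $t$; since $\badratioevent_{\infty,t}=\cap_n\badratioevent_{n,t}$ increases in $t$ with $\P(\badratioevent_{\infty,t})\to1$ (Lemma~\ref{lemma:crazy-ratios}) and all indicators equal $1$ on $\badratioevent_{\infty,t}$, we get $\P(\norms{\wt{\error}_n-\error_n}\not\to0)=\lim_{t\to\infty}\P(\badratioevent_{\infty,t},\ \norms{\wt{\error}_n-\error_n}\not\to0)\le\lim_{t\to\infty}\P(\indic{\badratioevent_{n,t}}\norms{\wt{\error}_n-\error_n}\not\to0)=0$. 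Combined with $\lyap(\wt{\error}_n)\cas0$, i.e.\ $\wt{\error}_n\cas0$ by~\eqref{eqn:lyapunov-is-norm-squared}, from~\eqref{eqn:resid-cas-error}, the triangle inequality gives $\error_n\cas0$, hence $\lyap(\error_n)\cas0$. For~\eqref{eqn:wrong-is-right} it suffices to prove $\frac1{\sqrt n}\sum_{k\le n}\norms{\wt{\error}_k-\error_k}\cp0$, since $\norms{\frac1{\sqrt n}\sum_k(\wt{\error}_k-\error_k)}\le\frac1{\sqrt n}\sum_k\norms{\wt{\error}_k-\error_k}$. Fixing $t$, Markov plus Jensen applied to~\eqref{eqn:expectation-single-step-bound} gives $\P(\frac1{\sqrt n}\sum_{k\le n}\indic{\badratioevent_{k,t}}\norms{\wt{\error}_k-\error_k}>\delta)\le\frac1{\delta\sqrt n}\sum_{k\le n}\E[\indic{\badratioevent_{k,t}}\norms{\wt{\error}_k-\error_k}]\lesssim\frac{C_t}{\delta\sqrt n}\sum_{k\le n}\stepsize_k k^\npow$ with $C_t<\infty$ independent of $n$; since $\stepsize_k k^\npow=\stepsize k^{\npow-\steppow}$ and $\npow-\steppow<-\frac12$, we have $\sum_{k\le n}\stepsize_k k^\npow=o(\sqrt n)$, so the bound tends to $0$. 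On $\badratioevent_{\infty,t}$ all indicators are $1$, so $\limsup_n\P(\frac1{\sqrt n}\sum_{k\le n}\norms{\wt{\error}_k-\error_k}>\delta)\le\P(\badratioevent_{\infty,t}^c)$; letting $t\to\infty$ finishes~\eqref{eqn:wrong-is-right}.

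The main obstacle is the bookkeeping around the truncation events $\badratioevent_{n,t}$: one must exploit $\badratioevent_{n,t}\subseteq\badratioevent_{k,t}$ and $\badratioevent_{k,t}\in\mc{F}_{k-1}$ to move the indicator past the conditional expectations so that~\eqref{eqn:expectation-single-step-bound} emerges with the \emph{uniform} constant $C_{\error,n-1,t}^2+1$ rather than a quantity that could itself grow with $n$; once that is in place, and given the exponent inequality $2(\npow-\steppow)<-1$ (equivalently $\npow<\steppow-\frac1{1+\gamma}$), the summations are routine. A secondary subtlety is the conditional second moment bound on $g_k$: it is immediate in the settings where the theorem is applied, and automatic under Assumption~\ref{assumption:resid-lipschitz}, but in the abstract strongly-convex-residual case it must be carried along as part of the standing growth hypotheses on $\resid$.
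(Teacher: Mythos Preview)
Your proposal is correct and follows essentially the same route as the paper: express $\wt{\error}_n-\error_n$ via the missing-update matrices, bound by $\badratio_n\stepsize_n\sum_{k}\norm{g_k}$ over the trailing $n^\npow$ window, push the indicator $\indic{\badratioevent_{n,t}}$ through conditional expectations using $\badratioevent_{n,t}\subset\badratioevent_{k,t}\in\mc{F}_{k-1}$, bound $\E[\norm{g_k}^2]$ under each assumption, and finish with Borel--Cantelli for~\eqref{eqn:resid-cas-error-true} and a Markov/summation argument plus $\P(\badratioevent_{\infty,t})\to1$ for~\eqref{eqn:wrong-is-right}. The only cosmetic difference is that for~\eqref{eqn:wrong-is-right} the paper bounds $\E[\indic{\badratioevent_{n,t}}\sum_k\norms{\wt{\error}_k-\error_k}]$ directly, whereas you obtain each $\E[\indic{\badratioevent_{k,t}}\norms{\wt{\error}_k-\error_k}]$ from the single-step bound~\eqref{eqn:expectation-single-step-bound} via Jensen and then sum; your closing remark about the Lipschitz growth of $\resid$ being an ambient hypothesis in the abstract strongly-convex case is also accurate (the paper uses it implicitly, e.g.\ in Lemma~\ref{lemma:errors-not-so-bad}).
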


Thus, any distributional convergence
results we are able to show on $n^{-\half} \sum_{k=1}^n \wt{\error}_k$ will
also hold for the uncorrected sequence $n^{-\half} \sum_{k=1}^n \error_k$ as
long as the conditions of Lemma~\ref{lemma:wrong-is-right} hold.  With this
in mind, we give an additional equivalence result showing that
$\wt{\error}_k$ is equivalent to an easier to analyze sequence of errors
generated from a simpler matrix iteration. Let the noise sequence
$\{\noise_k\}$ be generated as in the
iterations~\eqref{item:grad-comp-nl}--\eqref{item:update-step-nl}, and
consider the two iterations
\begin{equation}
  \label{eqn:corrected-iterations}
  \wt{\error}_{k + 1}
  = \wt{\error}_k - \stepsize_k \left(\resid(x_k) + \noise_k \right)
  ~~~ \mbox{and} ~~~
  \wt{\error}_{k+1}'
  = (I - \stepsize_k \hess) \wt{\error}_k'
  - \stepsize_k \noise_k.
\end{equation}
We have the following two results, which show (under slightly different
conditions) that the differences between the
iterations~\eqref{eqn:corrected-iterations} tends to zero.  The proofs of
the lemmas are quite similar, so we put material relevant for the proof of
both in Section~\ref{sec:proof-corrected-errors-matrices}, specializing
to each of the lemmas in sections~\ref{sec:proof-corrected-errors-matrices-a}
and~\ref{sec:proof-corrected-errors-matrices-b}, respectively.

\begin{lemma}
  \label{lemma:corrected-errors-matrices}
  Let Assumption~\ref{assumption:residual-quadratic} hold with some $\gamma
  \in \openleft{0}{1}$. Let the stepsizes $\stepsize_k = \stepsize
  k^{-\steppow}$, where $\frac{1}{1 + \gamma} + \frac{1}{\delmoment - 1} <
  \steppow < 1$. Let Assumption~\ref{assumption:resid-strong-convex} hold
  and assume that for each $t \in \R$ there is a constant $C_{\error,t} <
  \infty$ such that $\E[\indic{\badratioevent_{n,t}}\ltwo{\error_n}^2] \le
  C_{\error,t}^2$ for all $n$. Then
  \begin{equation*}
    n^{-\half} \sum_{k=1}^n (\wt{\error}_k - \wt{\error}_k')
    \cp 0.
  \end{equation*}
\end{lemma}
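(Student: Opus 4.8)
The plan is to track the difference $\delta_k \defeq \wt\error_k - \wt\error_k'$. Subtracting the two recursions in~\eqref{eqn:corrected-iterations} and writing $\resid(x_k) = \hess\error_k + r_k$, where $r_k \defeq \resid(x_k) - \hess(x_k - x\opt)$ and $\error_k = x_k - x\opt$, gives the \emph{linear} recursion
\begin{equation*}
  \delta_{k+1} = (I - \stepsize_k \hess)\delta_k + \stepsize_k u_k,
  \qquad
  u_k \defeq -\hess(\error_k - \wt\error_k) - r_k, \qquad \delta_1 = 0.
\end{equation*}
This identity is exact; Assumption~\ref{assumption:residual-quadratic} enters only to control $r_k$ (via $\norm{r_k}\le C\norm{\error_k}^{1+\gamma}$ once $\norm{\error_k}\le\epsilon$). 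Since $\badratioevent_{n,t}\in\mc{F}_{n-1}$ and $\lim_{t\to\infty}\P(\badratioevent_{\infty,t})=1$, it suffices to show $\indic{\badratioevent_{\infty,t}}\,n^{-\half}\sum_{k=1}^n\delta_k\cp 0$ for each fixed $t$, so I may freely insert indicators $\indic{\badratioevent_{k,t}}$ and work with the $L_2$ bounds available on $\badratioevent_{\infty,t}$; note Lemma~\ref{lemma:wrong-is-right} applies here since $\gamma\le1$ forces $\frac{1}{1+\gamma}\ge\half$ and hence $\steppow>\frac{1}{\delmoment-1}+\half$.

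The first and, I expect, main piece of work is an $L_2$ rate for the corrected error: for each $t$ there should be $C_t<\infty$ with $\E[\indic{\badratioevent_{n,t}}\norm{\wt\error_n}^2]\le C_t\stepsize_n$ for all $n$. I would get this by sharpening the one-step Lyapunov inequality underlying Lemma~\ref{lemma:p1-convergence}(a): under Assumption~\ref{assumption:resid-strong-convex} one obtains, on $\badratioevent_{n,t}$, a drift of the form $\E[\lyap(\wt\error_{n+1})\mid\mc{F}_{n-1}]\le(1-c\stepsize_n)\lyap(\wt\error_n)+C\stepsize_n^2$, where the $O(\stepsize_n^2)$ term absorbs the bounded noise variance (Assumption~\ref{assumption:noise-fun}) and the asynchrony slack, the latter controlled by $\badratio_n\le t$ and the hypothesis $\E[\indic{\badratioevent_{n,t}}\norm{\error_n}^2]\le C_{\error,t}^2$; a deterministic Chung-type recursion lemma then yields the $O(\stepsize_n)$ rate. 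The delicate point is the coupling: the noise bound in Assumption~\ref{assumption:noise-fun} enters through $\norm{\error_n}$, not $\norm{\wt\error_n}$, so the $\badratio_n$ bookkeeping of~\eqref{eqn:ratio-def} is needed to close the recursion. Combined with~\eqref{eqn:expectation-single-step-bound} and the fact that $2\npow-2\steppow<-\steppow$ (as $\npow<\steppow-\half$ and $\steppow<1$), the same $O(\stepsize_n)$ rate then also holds for $\E[\indic{\badratioevent_{n,t}}\norm{\error_n}^2]$ and for $\E[\indic{\badratioevent_{n,t}}\norm{\delta_n}^2]$ (using the standard Polyak--Juditsky bound $\E\norm{\wt\error_n'}^2\lesssim\stepsize_n$, which is unconditional).

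With this rate I would solve the recursion by the Polyak--Juditsky averaging identity: rearranging to $\delta_k=\hess^{-1}[\stepsize_k^{-1}(\delta_k-\delta_{k+1})+u_k]$, summing, and applying summation by parts (using $\delta_1=0$) yields
\begin{equation*}
  \frac{1}{\sqrt n}\sum_{k=1}^n\delta_k
  = \hess^{-1}\left[-\frac{\delta_{n+1}}{\sqrt n\,\stepsize_n}
  + \frac{1}{\sqrt n}\sum_{k=2}^n\big(\stepsize_k^{-1}-\stepsize_{k-1}^{-1}\big)\delta_k
  + \frac{1}{\sqrt n}\sum_{k=1}^n u_k\right].
\end{equation*}
Using $\E[\indic{\badratioevent_{k,t}}\norm{\delta_k}]\lesssim\sqrt{\stepsize_k}\asymp k^{-\steppow/2}$ and $\stepsize_k^{-1}-\stepsize_{k-1}^{-1}\asymp k^{\steppow-1}$, the boundary term is $O_P((n\stepsize_n)^{-1/2})=o_P(1)$ and the bias term is $O_P(n^{(\steppow-1)/2})=o_P(1)$, both because $\steppow<1$. (If $\hess$ is non-symmetric one first passes to a norm adapted to the Hurwitz matrix $-\hess$, as in \citet{PolyakJu92}; alternatively one can avoid the bias term by using variation of constants, $\delta_{n+1}=\sum_{k\le n}\big(\prod_{j=k+1}^n(I-\stepsize_j\hess)\big)\stepsize_k u_k$, together with $\sum_{m>k}\norm{\prod_{j=k+1}^{m-1}(I-\stepsize_j\hess)}\lesssim\stepsize_k^{-1}$, which gives $n^{-\half}\sum_m\norm{\delta_m}\lesssim n^{-\half}\sum_k\norm{u_k}$.) It then remains to show $n^{-\half}\sum_{k=1}^n\norm{u_k}\indic{\badratioevent_{\infty,t}}\cp 0$.

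For the last step, split $u_k$. The term $\hess(\error_k-\wt\error_k)$ contributes exactly the quantity shown to vanish in~\eqref{eqn:wrong-is-right}. For $r_k$: by~\eqref{eqn:resid-cas-error-true}, $\error_k\cas 0$, so there is a finite random $N$ with $\norm{\error_k}\le\epsilon$ for $k\ge N$ (the $k<N$ terms are $o(\sqrt n)$ a.s.), whence $\norm{r_k}\le C\norm{\error_k}^{1+\gamma}\lesssim\norm{\error_k-\wt\error_k}^{1+\gamma}+\norm{\wt\error_k}^{1+\gamma}$. By Jensen on $\badratioevent_{k,t}$ and~\eqref{eqn:expectation-single-step-bound}, $\E[\indic{\badratioevent_{k,t}}\norm{\error_k-\wt\error_k}^{1+\gamma}]\lesssim_t(\stepsize_k^2k^{2\npow})^{(1+\gamma)/2}$, which is \emph{summable} precisely because $(\npow-\steppow)(1+\gamma)<-1$ --- this is where $\npow<\steppow-\frac{1}{1+\gamma}$, equivalently $\steppow>\frac{1}{1+\gamma}+\frac{1}{\delmoment-1}$, is used --- while the rate above gives $\E[\indic{\badratioevent_{k,t}}\norm{\wt\error_k}^{1+\gamma}]\lesssim\stepsize_k^{(1+\gamma)/2}$, whose partial sums are $O(n^{1-\steppow(1+\gamma)/2})=o(\sqrt n)$ since $\steppow(1+\gamma)>1$. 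Hence $n^{-\half}\sum_k\norm{u_k}\indic{\badratioevent_{\infty,t}}\cp 0$, and letting $t\to\infty$ completes the proof. The genuine obstacle is the self-referential $L_2$ rate of the second paragraph; the rest is Polyak--Juditsky-style averaging combined with the delay bookkeeping already developed.
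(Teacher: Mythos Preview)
Your approach is essentially the paper's, just organized slightly differently. The paper writes the difference via the Polyak--Juditsky matrix identity $\sum_{k=1}^n(\wt\error_k-\wt\error_k')=\sum_{k=1}^n(\hess^{-1}-W_k^n)Z_k$ with $Z_k=\hess\wt\error_k-\resid(x_k)$ (your $u_k$), invokes $\sup_{k,n}\norm{W_k^n}<\infty$ to reduce to $n^{-1/2}\sum_k\norm{Z_k}\cp 0$, and then splits $Z_k$ as $(\hess\wt\error_k-\resid(\wt x_k))+(\resid(\wt x_k)-\resid(x_k))$, handling the second piece by Lipschitzness of $\resid$ and Lemma~\ref{lemma:wrong-is-right}, and the first via Assumption~\ref{assumption:residual-quadratic} and an $L_2$ rate on $\norm{\wt\error_k}$. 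This is your variation-of-constants alternative, and your split of $u_k$ is the same decomposition read through $x_k$ instead of $\wt x_k$.

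There is one slip worth flagging. The one-step drift you state, $\E[\lyap(\wt\error_{n+1})\mid\mc F_{n-1}]\le(1-c\stepsize_n)\lyap(\wt\error_n)+C\stepsize_n^2$, is too optimistic: the asynchrony slack contributes a sum of $n^\npow$ terms (cf.\ inequality~\eqref{eqn:single-step-alternate-lyap-v2}), so the additive term is $C\stepsize_n^2 n^\npow$, and the Chung recursion yields the rate $\E[\indic{\badratioevent_{n,t}}\norm{\wt\error_n}^2]\lesssim \stepsize_n n^\npow$, not $\stepsize_n$. This does not damage your main step---$\sum_k\E[\indic{\badratioevent_{k,t}}\norm{\wt\error_k}^{1+\gamma}]\lesssim\sum_k k^{(\npow-\steppow)(1+\gamma)/2}=o(\sqrt n)$ holds precisely under $(\steppow-\npow)(1+\gamma)>1$, i.e.\ $\npow<\steppow-\frac{1}{1+\gamma}$, which is the hypothesis---but it can break your summation-by-parts boundary and bias terms: with the corrected rate both scale like $n^{(\steppow+\npow-1)/2}$, and $\steppow+\npow<1$ is \emph{not} implied by the assumptions. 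Your variation-of-constants route (equivalently, the paper's $W_k^n$ bound) sidesteps this entirely, so use that. The paper also carries the localization events $\event_{k/2}^{k-1}$ through the recursion; under Assumption~\ref{assumption:resid-strong-convex} this is unnecessary (the drift inequality holds globally), and your direct Chung argument is cleaner here---the events are really there to unify with the proof of the companion Lemma~\ref{lemma:corrected-errors-matrices-lipschitz} under Assumption~\ref{assumption:resid-lipschitz}, where the strong-convexity inequality is only local.
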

\begin{lemma}
  \label{lemma:corrected-errors-matrices-lipschitz}
  Let Assumption~\ref{assumption:residual-quadratic} hold with some $\gamma
  \in \openleft{0}{1}$. Let the stepsizes $\stepsize_k = \stepsize
  k^{-\steppow}$, where $\frac{1}{1 + \gamma} + \frac{1}{\delmoment - 1} <
  \steppow < 1$. Let Assumption~\ref{assumption:resid-lipschitz} hold. Then
  \begin{equation*}
    n^{-\half} \sum_{k=1}^n (\wt{\error}_k - \wt{\error}_k')
    \cp 0.
  \end{equation*}
\end{lemma}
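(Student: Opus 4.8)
The plan is to analyze the difference sequence $\delta_k \defeq \wt{\error}_k - \wt{\error}_k'$ of the two iterations in~\eqref{eqn:corrected-iterations} directly. Writing $\error_k = x_k - x\opt$ and $r_k \defeq \resid(x_k) - \hess \error_k$, subtracting the recursions in~\eqref{eqn:corrected-iterations} gives the contractive linear recursion
\begin{equation*}
  \delta_{k+1} = (I - \stepsize_k \hess)\delta_k - \stepsize_k e_k,
  \qquad e_k \defeq r_k + \hess(\error_k - \wt{\error}_k),
\end{equation*}
so $\delta_k$ is driven by a curvature error $r_k$ (controlled by Assumption~\ref{assumption:residual-quadratic}) and an asynchrony error $\hess(\error_k - \wt{\error}_k)$ (controlled by Lemma~\ref{lemma:wrong-is-right}). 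As elsewhere in the proof I would run the argument on the events $\badratioevent_{\infty,t}$ and let $t \to \infty$ at the end, using $\P(\badratioevent_{\infty,t}) \to 1$.

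Next I would convert the recursion into a sum by a Polyak--Juditsky-style summation by parts: since $\hess\delta_k = \stepsize_k^{-1}(\delta_k - \delta_{k+1}) - e_k$, summing and telescoping yields
\begin{equation*}
  \sum_{k=1}^n \delta_k = \hess^{-1}\Big[ \stepsize_1^{-1}\delta_1 - \stepsize_n^{-1}\delta_{n+1} + \sum_{k=2}^n \big(\stepsize_k^{-1} - \stepsize_{k-1}^{-1}\big)\delta_k - \sum_{k=1}^n e_k \Big].
\end{equation*}
After dividing by $\sqrt{n}$ I want every term to be $o_P(1)$. The $\delta_1$ term is trivial (it is a deterministic initial discrepancy), and the part of $\frac{1}{\sqrt n}\sum_k e_k$ coming from $\hess(\error_k - \wt{\error}_k)$ is precisely~\eqref{eqn:wrong-is-right}. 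For the remaining pieces I would first establish the standard $L_2$ rate $\E[\indic{\badratioevent_{\infty,t}} \ltwo{\wt{\error}_k}^2] \lesssim \stepsize_k$; combining this with $\ltwo{\error_k - \wt{\error}_k}^2 = O_P(\stepsize_k^2 k^{2\npow})$ from Lemma~\ref{lemma:wrong-is-right} and the fact that $\npow < \steppow/2$ (which follows from~\eqref{eqn:delay-moment-interval} since $\frac{1}{1+\gamma} \ge \half$ and $\steppow < 1$) gives $\ltwo{\error_k} = O_P(k^{-\steppow/2})$. Because $\error_k \cas 0$ by~\eqref{eqn:resid-cas-error-true}, eventually $\ltwo{\error_k} \le \epsilon$, so Assumption~\ref{assumption:residual-quadratic} gives $\ltwo{r_k} \le C\ltwo{\error_k}^{1+\gamma} = O_P(k^{-\steppow(1+\gamma)/2})$ for all large $k$ (the finitely many earlier terms contribute $o(\sqrt n)$ almost surely), whence $\frac{1}{\sqrt n}\sum_{k=1}^n \ltwo{r_k} \cp 0$ exactly because $\steppow > \frac{1}{1+\gamma}$. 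The same rates give $\ltwo{\delta_k} = O_P(k^{-a})$ with $a \defeq \min\{\steppow(1+\gamma)/2,\, \steppow - \npow\} > 0$ (a standard contraction estimate, using $\ltwo{e_k} = O_P(k^{-a})$ and that $\prod_{j=m}^{k-1}(I-\stepsize_j\hess)$ decays like $\exp(-c\sum_{j=m}^{k-1}\stepsize_j)$), and since $\steppow - a < \half$ both $\stepsize_n^{-1}\delta_{n+1}/\sqrt n = O_P(n^{\steppow - a - 1/2})$ and $\frac{1}{\sqrt n}\sum_{k=2}^n (\stepsize_k^{-1} - \stepsize_{k-1}^{-1})\delta_k \asymp \frac{1}{\sqrt n}\sum_k k^{\steppow - 1}\ltwo{\delta_k}$ vanish. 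This yields $\frac{1}{\sqrt n}\sum_{k \le n}\delta_k\,\indic{\badratioevent_{\infty,t}} \cp 0$ for every $t$, and letting $t \to \infty$ proves the lemma.

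The main obstacle is establishing the $L_2$ rate $\E[\indic{\badratioevent_{\infty,t}} \ltwo{\wt{\error}_k}^2] \lesssim \stepsize_k$. In the strongly convex case (Lemma~\ref{lemma:corrected-errors-matrices}) the global drift Assumption~\ref{assumption:resid-strong-convex} yields $\E[\lyap(\wt{\error}_{k+1}) \mid \mc{F}_{k-1}] \le (1 - c\stepsize_k)\lyap(\wt{\error}_k) + O(\stepsize_k^2) + (\text{asynchrony})$ directly, so the rate follows from a routine recursion; here Assumption~\ref{assumption:resid-lipschitz} supplies only a \emph{local} drift inequality near $x\opt$, together with boundedness of $\resid$ and of $\E[\ltwo{\noise_k}^2 \mid \mc{F}_{k-1}]$. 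I would therefore use a two-phase argument: $\lyap(\wt{\error}_n) \cas 0$ (already shown at~\eqref{eqn:resid-cas-error}) puts the iterate eventually inside the ball $\{\ltwo{x - x\opt} \le \epsilon\}$, on which the local drift, the $L_2$ bound on $\noise_k$, boundedness of $\resid$, and the delay control of Lemma~\ref{lemma:crazy-ratios} combine to give the recursion above; solving it gives the $O(\stepsize_k)$ rate. Making this uniform over the events $\badratioevent_{\infty,t}$ (so that the limit $t \to \infty$ is legitimate) and accounting for the almost surely finitely many excursions of $\wt{\error}_n$ out of the ball is the delicate point; the remaining estimates are routine contraction bounds.
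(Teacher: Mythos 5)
Your proposal follows essentially the same strategy as the paper's proof, and the plan is sound, but two points deserve comment. Structurally, both arguments reduce $\sum_{k\le n}(\wt{\error}_k - \wt{\error}_k')$ to a uniformly weighted sum of the linearization error $\hess\wt{\error}_k - \resid(x_k)$, split that error into a curvature piece (Assumption~\ref{assumption:residual-quadratic}) and an asynchrony piece (Lemma~\ref{lemma:wrong-is-right}), and kill the curvature piece via an $L_2$ decay rate for $\lyap(\wt{\error}_k)$ obtained from a localized drift recursion; your explicit Abel summation is algebraically the same device as the paper's appeal to the Polyak--Juditsky matrices $W_k^n$, except that you must additionally control the boundary term $\stepsize_n^{-1}\delta_{n+1}/\sqrt{n}$ and the telescoped sum $\sum_k(\stepsize_k^{-1}-\stepsize_{k-1}^{-1})\delta_k$, which forces you to prove a separate decay rate for $\delta_k$ itself (an extra, though routine, contraction estimate that the paper's route avoids because $\sup_{k,n}\norm{W_k^n}<\infty$ and $n^{-1}\sum_k\norm{W_k^n}\to 0$ already absorb those terms). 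The one claim that needs correction is the intermediate rate $\E[\indic{\badratioevent_{\infty,t}}\ltwo{\wt{\error}_k}^2]\lesssim\stepsize_k$: the drift inequality available here has the form $\E[\lyap(\wt{\error}_{k+1})]\le(1-c\stepsize_k+C\stepsize_k^2 k^\npow)\E[\lyap(\wt{\error}_k)]+Ct^2\stepsize_k^2 k^\npow$, whose fixed point is of order $\stepsize_k k^\npow$, not $\stepsize_k$ --- the asynchrony window of length $k^\npow$ costs you a factor of $k^\npow$. With the correct rate $k^{\npow-\steppow}$ the condition for $n^{-1/2}\sum_k\norms{\error_k}^{1+\gamma}\to 0$ becomes $(\steppow-\npow)\frac{1+\gamma}{2}>\half$, i.e.\ $\steppow-\npow>\frac{1}{1+\gamma}$, which is exactly what the choice of $\npow$ in~\eqref{eqn:delay-moment-interval} guarantees, so your conclusion survives; but the condition $\steppow>\frac{1}{1+\gamma}$ you derive from the too-optimistic rate is not by itself sufficient. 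Finally, the localization step you flag as delicate is indeed where the paper spends its effort: it replaces your informal ``eventually inside the ball'' by the windowed events $\event_{k/2}^{k-1}$ (iterates remain within $\epsilon$ of $x\opt$ over $[k/2,k-1]$), runs the drift recursion only over that window starting from the a priori polynomial bound on $\E[\lyap(\wt{\error}_{\ceil{k/2}})]$, and disposes of the complementary event by noting it occurs for only finitely many $k$ almost surely; your sketch is consistent with this but would need that device (or an equivalent one) to be rigorous, since the random time at which the iterates enter the ball cannot be conditioned on naively inside an $L_2$ recursion.
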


With the preceding lemmas in place, we require two additional claims that
immediately yield our desired convergence guarantee. The first is the
asymptotic normality of the matrix product sequence $\wt{\error}_k'$, that
is, that $n^{-\half} \sum_{k=1}^n \wt{\error}'_k$ is asymptotically
normal. The second is that under
Assumption~\ref{assumption:resid-strong-convex}, we have $\sup_k
\E[\norm{\error_k}^2 \indic{\badratioevent_{k,t}}] < \infty$ for all $t \in
\R$.  We begin with the first result.
\begin{lemma}[Polyak and Juditsky~\cite{PolyakJu92}, Theorem 1]
  \label{lemma:classical-matrix-convergence}
  Let $\{\noise_k\}$ be a martingale difference sequence adapted to the
  filtration $\mc{F}_k$, so that $\E[\noise_k \mid \mc{F}_{k-1}] = 0$ and
  $\sup_k \E[\norm{\noise_k}^2 \mid \mc{F}_{k-1}] < \infty$ with probability
  1. Assume additionally that
  \begin{equation*}
    \lim_{c \to \infty} \limsup_{k \to \infty}
    \E\left[\norms{\noise_k}^2 \indic{\norms{\noise_k} > c}
      \mid \mc{F}_{k-1}\right] = 0 ~ \mbox{in~probability}
  \end{equation*}
  and that as $k \to \infty$ we have
  \begin{equation*}
    \cov(\noise_k \mid \mc{F}_{k-1}) \cp \Sigma \succ 0,
  \end{equation*}
  where $\cov(\noise \mid \mc{F}) = \E[\noise \noise^\top \mid \mc{F}]$.
  Then if $\hess \succ 0$, the iteration
  \begin{equation*}
    \wt{\error}_{k+1}' = (I - \stepsize_k \hess) \wt{\error}_k'
    - \stepsize_k \noise_k
  \end{equation*}
  with $\stepsize_k = \stepsize k^{-\steppow}$ satisfies
  \begin{equation*}
    \frac{1}{\sqrt{n}} \sum_{k = 1}^n \wt{\error}_k'
    \cd
    \normal\left(0, \hess^{-1} \Sigma \hess^{-1}\right).
  \end{equation*}
\end{lemma}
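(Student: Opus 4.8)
The statement is, up to cosmetic relabeling, Theorem~1 of \citet{PolyakJu92}, so one option is simply to check that the hypotheses here imply theirs and invoke it; but since only the linear matrix iteration $\wt{\error}_{k+1}' = (I - \stepsize_k \hess)\wt{\error}_k' - \stepsize_k \noise_k$ appears, it is worth recording the short classical argument. The plan has three stages: (i) linearize the averaged iterate $n^{-1/2}\sum_{k=1}^n \wt{\error}_k'$ into a leading martingale term plus a remainder, via an Abel summation; (ii) apply a multivariate martingale central limit theorem to the leading term; and (iii) show the remainder is $o_P(1)$, the key input being the classical second-moment rate $\E[\norm{\wt{\error}_k'}^2] = O(\stepsize_k)$ for the non-averaged iterate.

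For stage (i), first note (by an immediate induction, using that $\noise_k$ is $\mc{F}_k$-measurable and $\wt{\error}_1'$ is deterministic) that $\wt{\error}_k' \in \mc{F}_{k-1}$, so conditional expectations are legitimate. Rewriting the recursion gives $\hess \wt{\error}_k' = \stepsize_k^{-1}(\wt{\error}_k' - \wt{\error}_{k+1}') - \noise_k$; summing over $k \le n$ and performing a summation by parts on the telescoping-type term yields
\begin{equation*}
  \hess \sum_{k=1}^n \wt{\error}_k'
  = -\sum_{k=1}^n \noise_k
  + \frac{\wt{\error}_1'}{\stepsize_1}
  - \frac{\wt{\error}_{n+1}'}{\stepsize_n}
  + \sum_{k=2}^n \Big(\frac{1}{\stepsize_k} - \frac{1}{\stepsize_{k-1}}\Big)
  \wt{\error}_k'.
\end{equation*}
Multiplying through by $\hess^{-1}$ (available since $\hess \succ 0$) and dividing by $\sqrt{n}$ expresses $n^{-1/2}\sum_{k=1}^n \wt{\error}_k'$ as $-\hess^{-1} n^{-1/2}\sum_{k=1}^n \noise_k$ plus $\hess^{-1}$ times a remainder $r_n$ collecting the last three terms.

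For stage (ii), $S_n := \sum_{k=1}^n \noise_k$ is a martingale with respect to $\{\mc{F}_k\}$ since $\E[\noise_k \mid \mc{F}_{k-1}] = 0$; Cesàro averaging of $\cov(\noise_k \mid \mc{F}_{k-1}) \cp \Sigma$ gives $n^{-1}\sum_{k=1}^n \E[\noise_k \noise_k^\top \mid \mc{F}_{k-1}] \cp \Sigma$, and the stated truncated-second-moment hypothesis gives the conditional Lindeberg condition at level $\epsilon\sqrt{n}$ for every $\epsilon > 0$ (any fixed truncation level $c$ eventually lies below $\epsilon\sqrt{n}$, and the truncated moments at $\epsilon\sqrt n$ are dominated by those at $c$). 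The multivariate martingale CLT then gives $n^{-1/2} S_n \cd \normal(0, \Sigma)$, hence $-\hess^{-1} n^{-1/2} S_n \cd \normal(0, \hess^{-1} \Sigma \hess^{-1})$. For stage (iii), set $a_k = \E[\norm{\wt{\error}_k'}^2]$; conditioning on $\mc{F}_{k-1}$ and using $\E[\noise_k \mid \mc{F}_{k-1}] = 0$ gives $a_{k+1} \le \norm{I - \stepsize_k \hess}_{\mathrm{op}}^2 a_k + \stepsize_k^2 \sigma^2$ with $\sigma^2 := \E[\sup_j \E[\norm{\noise_j}^2 \mid \mc{F}_{j-1}]]$ (finite in the application by Assumption~\ref{assumption:noise-fun}), and since $\hess \succ 0$ and $\stepsize_k \to 0$ the operator norm is at most $1 - c\stepsize_k$ for large $k$, so a standard recursion (Chung's lemma) gives $a_k \lesssim \stepsize_k = \stepsize k^{-\steppow}$ using $\steppow < 1$. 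Consequently $\norm{\wt{\error}_1'}/(\sqrt{n}\stepsize_1) = O(n^{-1/2})$; $\E\norm{\wt{\error}_{n+1}'}/(\sqrt{n}\stepsize_n) \lesssim n^{\steppow - 1/2} \cdot n^{-\steppow/2} = n^{(\steppow-1)/2} \to 0$; and since $\stepsize_k^{-1} - \stepsize_{k-1}^{-1} \asymp k^{\steppow - 1}$, the last sum has $L_1$-norm $\lesssim n^{-1/2}\sum_{k=2}^n k^{\steppow - 1} k^{-\steppow/2} \asymp n^{-1/2} \cdot n^{\steppow/2} = n^{(\steppow-1)/2} \to 0$. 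Thus $r_n \cp 0$, and Slutsky's theorem combined with stage (ii) yields the claim.

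The main obstacle is stage (iii): obtaining the sharp $O(\stepsize_k)$ second-moment rate for the non-averaged iterate (the Chung-lemma induction, plus care that the bounded-conditional-variance hypothesis is strong enough to license an unconditional recursion) and then checking that all three remainder pieces vanish, where the upper bound $\steppow < 1$ is exactly what forces $n^{(\steppow-1)/2} \to 0$; the martingale CLT and the Abel summation are essentially bookkeeping. Note the lower bound $\steppow > \tfrac12$ is not needed for this particular reduction — it enters the surrounding lemmas (almost-sure convergence and the asynchrony corrections of Lemmas~\ref{lemma:wrong-is-right}--\ref{lemma:corrected-errors-matrices-lipschitz}), not the central limit theorem for the ideal linear iteration.
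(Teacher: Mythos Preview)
The paper does not prove this lemma; it is quoted as Theorem~1 of \citet{PolyakJu92} and invoked as a black box, so there is no in-paper argument to match against.

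Your sketch is a correct alternative route, the Ruppert-style Abel summation, and it differs from the Polyak--Juditsky decomposition that the paper actually references in the surrounding material. Polyak and Juditsky write $\sum_{k=1}^n \wt{\error}_k' = \sum_k B_1^{k-1}\wt{\error}_1' - \sum_k \hess^{-1}\noise_k + \sum_k W_k^n \noise_k$ with $B_l^k = \prod_{i=l}^k (I - \stepsize_i \hess)$ and $W_k^n = \stepsize_k \sum_{l=k+1}^{n-1} B_{k+1}^l - \hess^{-1}$, and control the remainder via the estimate $\sup_{k,n}\|W_k^n\| < \infty$, $n^{-1}\sum_k \|W_k^n\| \to 0$; this is exactly the paper's Lemma~\ref{lemma:polyak-matrix-lemma}, which is quoted and reused in the proofs of Lemmas~\ref{lemma:corrected-errors-matrices} and~\ref{lemma:corrected-errors-matrices-lipschitz}. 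Your route trades that matrix algebra for a Chung-lemma recursion on $\E\norm{\wt{\error}_k'}^2$; both land on the same martingale CLT plus Slutsky.

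One substantive difference: the Polyak--Juditsky remainder $n^{-1/2}\sum_k W_k^n \noise_k$ is handled through its conditional quadratic variation, which needs only the \emph{almost-sure} bound $\sup_k \E[\norm{\noise_k}^2 \mid \mc{F}_{k-1}] < \infty$ stated in the lemma. Your recursion for $a_k = \E\norm{\wt{\error}_k'}^2$ needs an unconditional $\sigma^2 = \E[\sup_j \E[\norm{\noise_j}^2 \mid \mc{F}_{j-1}]] < \infty$, which the lemma as stated does not assume. You flag this yourself (``finite in the application by Assumption~\ref{assumption:noise-fun}''), and that is accurate for how the lemma is used downstream; but if you want to prove the lemma exactly as stated, insert a localization on the event $\{\sup_{j\le k}\E[\norm{\noise_j}^2 \mid \mc{F}_{j-1}] \le c\}$ before running the Chung recursion, then let $c \to \infty$.
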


We verify that the conditions of
Lemma~\ref{lemma:classical-matrix-convergence} hold in the two settings
captured by Theorem~\ref{theorem:nonlinear}, that is, under
Assumption~\ref{assumption:resid-strong-convex}
and~\ref{assumption:resid-lipschitz}.  For
Assumption~\ref{assumption:resid-lipschitz}, we immediately have each
condition on $\noise_k$ except that $\cov(\noise_k \mid \mc{F}_{k-1}) \cp
\Sigma$ for some positive definite matrix $\Sigma$. But we have $\error_k
\cas 0$ as in expression~\eqref{eqn:resid-cas-error-true}, so
that by Assumption~\ref{assumption:noise-fun}
\begin{equation*}
  \cov(\noise_k \mid \mc{F}_{k-1})
  = \E[\noise_k(0) \noise_k(0)^\top
    + \noisier(x_k) \noisier(x_k)^\top \mid \mc{F}_{k-1}]
  = \Sigma + o_P(1) + O_P(\norm{x_k - x\opt}^2)
  \cp \Sigma.
\end{equation*}
Thus, the conditions of Lemma~\ref{lemma:classical-matrix-convergence} hold
under Assumption~\ref{assumption:resid-lipschitz}.  We now argue that the
conditions of the lemma hold under
Assumption~\ref{assumption:resid-strong-convex} and the additional condition
that $\sup_k \E[\norm{\error_k}^2 \indic{\badratioevent_{k,t}}] < \infty$
for all $t \in \R$.  In this case, we still know that $\E[\noise_k \mid
  \mc{F}_{k-1}] = 0$, and by Assumption~\ref{assumption:noise-fun}, we have
\begin{align*}
  \sup_k \E\left[\norms{\noise_k}^2 \mid \mc{F}_{k-1}\right]
  & \le 2 \sup_k \E\left[\norms{\noise_k(0)}^2 +
    \norms{\noisier(x_k)}^2 \mid \mc{F}_{k-1}\right] \\
  & \le 2 \sup_k \E[\norms{\noise_k(0)}^2
    \mid \mc{F}_{k-1}] + C \sup_k \norm{\error_k}^2
  < \infty,
\end{align*}
as $\norm{\error_k} \cas 0$ by the
result~\eqref{eqn:resid-cas-error-true}. The second condition on the limits
of $\E[\norms{\noise_k}^2 \indic{\norms{\noise_k} > c} \mid \mc{F}_{k-1}]$
holds similarly, as we have $\lim_{c \to \infty} \P(\sup_k \norm{\error_k} >
c) = 0$ because $\norm{\error_k} \cas 0$, again using the
guarantee~\eqref{eqn:resid-cas-error-true} if $\sup_k
\E[\indic{\badratioevent_{k,t}} \norms{\error_k}^2] < \infty$. The
covariance condition follows identically to the argument under
Assumption~\ref{assumption:resid-lipschitz}.  Summarizing, we have shown the
following result.
\begin{lemma}
  \label{lemma:corrected-sequence-normal}
  Let either (i) Assumption~\ref{assumption:resid-strong-convex} hold and
  assume that $\sup_k \E[\norms{\error}_k^2 \indic{\badratioevent_{k,t}}] <
  \infty$ for each $t$ or (ii) Assumption~\ref{assumption:resid-lipschitz}
  hold. Then the sequence $\wt{\error}'_k$ defined by the
  iteration~\eqref{eqn:corrected-iterations} satisfies
  \begin{equation*}
    \frac{1}{\sqrt{n}} \sum_{k = 1}^n \wt{\error}_k'
    \cd
    \normal\left(0, \hess^{-1} \Sigma \hess^{-1}\right).
  \end{equation*}
\end{lemma}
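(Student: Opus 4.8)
The plan is to obtain the central limit theorem for $n^{-\half}\sum_{k=1}^n \wt{\error}_k'$ directly from the classical result of Polyak and Juditsky recorded as Lemma~\ref{lemma:classical-matrix-convergence}, since the iteration defining $\wt{\error}_k'$ in~\eqref{eqn:corrected-iterations} is exactly the matrix iteration appearing there. Thus the entire argument reduces to verifying the four hypotheses of that lemma for the noise sequence $\{\noise_k\}$: that $\hess \succ 0$; that $\{\noise_k\}$ is a martingale difference sequence adapted to $\mc{F}_k$ with $\sup_k \E[\norms{\noise_k}^2 \mid \mc{F}_{k-1}] < \infty$ almost surely; that the truncated conditional second moments $\E[\norms{\noise_k}^2 \indic{\norms{\noise_k} > c} \mid \mc{F}_{k-1}]$ vanish in probability as $c \to \infty$, uniformly over large $k$; and that $\cov(\noise_k \mid \mc{F}_{k-1}) \cp \Sigma \succ 0$. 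The first is immediate from Assumption~\ref{assumption:residual-quadratic}, the martingale-difference property is part of Assumption~\ref{assumption:noise-fun}, and the positive definiteness of $\Sigma$ is also given there.

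The key ingredient for the remaining three conditions is the almost sure convergence $\error_k \cas 0$ of the true error sequence. In case (ii), under Assumption~\ref{assumption:resid-lipschitz}, this is precisely~\eqref{eqn:resid-cas-error-true} of Lemma~\ref{lemma:wrong-is-right}; in case (i), under Assumption~\ref{assumption:resid-strong-convex}, Lemma~\ref{lemma:wrong-is-right} again delivers~\eqref{eqn:resid-cas-error-true}, but only because we have assumed the moment bound $\sup_k \E[\indic{\badratioevent_{k,t}} \norms{\error_k}^2] < \infty$ for every $t$, which is what lets Lemma~\ref{lemma:wrong-is-right} apply. Granting $\error_k \cas 0$, I would use the decomposition $\noise_k = \noise_k(0) + \noisier_k(x_k)$ of Assumption~\ref{assumption:noise-fun}. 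Then $\sup_k \E[\norms{\noise_k}^2 \mid \mc{F}_{k-1}] \le 2 \sup_k \E[\norms{\noise_k(0)}^2 \mid \mc{F}_{k-1}] + 2C \sup_k \norms{\error_k}^2$, which is almost surely finite: the first term because Assumption~\ref{assumption:noise-fun} asserts $\E[\sup_k \E[\norms{\noise_k(0)}^2 \mid \mc{F}_{k-1}]] < \infty$, the second because $\error_k \cas 0$ forces $\sup_k \norms{\error_k} < \infty$. For the covariance condition, expanding $\cov(\noise_k \mid \mc{F}_{k-1})$ produces $\E[\noise_k(0) \noise_k(0)^\top \mid \mc{F}_{k-1}]$, which converges in probability to $\Sigma$ by Assumption~\ref{assumption:noise-fun}, plus cross terms bounded by Cauchy--Schwarz as $O_P(\norms{\error_k})$ and a term $O_P(\norms{\error_k}^2)$ from $\E[\noisier_k(x_k) \noisier_k(x_k)^\top \mid \mc{F}_{k-1}]$; both vanish since $\error_k \cas 0$, giving $\cov(\noise_k \mid \mc{F}_{k-1}) \cp \Sigma$. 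For the Lindeberg-type truncation, I would argue that the $\noisier_k(x_k)$ piece has conditional second moment at most $C \norms{\error_k}^2 \to 0$, so that $\noise_k$ asymptotically behaves like the tamer sequence $\noise_k(0)$, whose uniformly bounded conditional second moments (and asymptotically stationary covariance) furnish the required uniform integrability; making this precise is a standard $\epsilon$--$c$ argument combined with the convergence in probability of the conditional covariances.

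The hard part will not be any single one of these verifications---each is routine given the ground already prepared---but rather the reliance, in case (i), on the moment estimate $\sup_k \E[\indic{\badratioevent_{k,t}} \norms{\error_k}^2] < \infty$, which is a genuinely substantial fact proved elsewhere (via the Robbins--Siegmund argument underlying Lemmas~\ref{lemma:p1-convergence} and~\ref{lemma:wrong-is-right}) and which we take here as a hypothesis. The only other subtlety is the bookkeeping around the ``in probability'' qualifier in the Lindeberg condition and in the convergence of the conditional covariances, since the delay randomness prevents clean almost sure statements; once $\error_k \cas 0$ is in hand, however, these are handled exactly as in the classical i.i.d.-noise setting, and the conclusion follows by Lemma~\ref{lemma:classical-matrix-convergence}.
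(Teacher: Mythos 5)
Your proposal is correct and follows essentially the same route as the paper: both reduce the claim to the Polyak--Juditsky result recorded as Lemma~\ref{lemma:classical-matrix-convergence} and verify its hypotheses via the decomposition $\noise_k = \noise_k(0) + \noisier_k(x_k)$ of Assumption~\ref{assumption:noise-fun} together with the almost sure convergence $\error_k \cas 0$ from the guarantee~\eqref{eqn:resid-cas-error-true}. If anything, your explicit Cauchy--Schwarz treatment of the cross terms in $\cov(\noise_k \mid \mc{F}_{k-1})$ is slightly more careful than the paper's display, which suppresses them.
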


By Lemmas~\ref{lemma:corrected-errors-matrices}
and~\ref{lemma:corrected-errors-matrices-lipschitz} and the probabilistic
equivalence~\eqref{eqn:wrong-is-right} of the sequences
$\sqrt{n}\wb{\wt{\error}}_n$ and $\sqrt{n}\wb{\error}_n$,
Lemma~\ref{lemma:corrected-sequence-normal} implies that (under the
conditions of the lemma)
\begin{equation*}
  \frac{1}{\sqrt{n}} \sum_{k = 1}^n \error_k
  \cd
  \normal\left(0, \hess^{-1} \Sigma \hess^{-1}\right).
\end{equation*}
This is the statement of the theorem under
Assumption~\ref{assumption:resid-lipschitz}; the proof of the theorem
will be complete if we can show that under the strong convexity
Assumption~\ref{assumption:resid-strong-convex} we have $\sup_k
\E[\indic{\badratioevent_{k,t}}\norm{\error_k}^2] < \infty$
for all $t \in \R$.

We present a final lemma that gives the boundedness of the sequences
$\E[\indic{\badratioevent_{k,t}}\norm{\error_k}^2]$.
\begin{lemma}
  \label{lemma:smaller-than-past}
  Let Assumption~\ref{assumption:resid-strong-convex} hold and $\epsilon >
  0$, $t \in \R$. There exists some $N = N(\epsilon,t) \in \N$ such that $n
  \ge N$ implies
  \begin{equation*}
    \E[\norms{\wt{\error}_{n+1}}^2 \indic{\badratioevent_{n+1,t}}]
    \le \epsilon^2
    \max_{k \le n} \max\{1, \E[\indic{\badratioevent_{k,t}}
      \norms{\error_k}^2]\}.
  \end{equation*}
\end{lemma}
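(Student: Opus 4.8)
The plan is to run a one–step Lyapunov recursion for the corrected sequence $\wt{\error}_n$ \emph{on the event} $\badratioevent_{n,t}$, convert it into a scalar recursion of the form $a_{n+1} \le (1 - c_1 \stepsize_n) a_n + \stepsize_n \delta_n B_n$ with $\delta_n \to 0$, and then unroll it. Write $v_n \defeq \indic{\badratioevent_{n,t}} \lyap(\wt{\error}_n)$, $a_n \defeq \E[v_n]$, and $B_n \defeq \max_{k \le n} \max\{1, \E[\indic{\badratioevent_{k,t}} \norms{\error_k}^2]\}$, so that the claim to prove is $a_{n+1} \le \lambda \epsilon^2 B_n$ for all large $n$ (then divide by $\lambda$ and use $v_{n+1} \ge \lambda \indic{\badratioevent_{n+1,t}} \norms{\wt{\error}_{n+1}}^2$ from~\eqref{eqn:lyapunov-is-norm-squared}). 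Two easy preliminaries: the events are nested, so $\indic{\badratioevent_{n+1,t}} \le \indic{\badratioevent_{n,t}}$ and $B_n$ is non-decreasing; and $a_n \le C_a B_n$ for every $n$ and a constant $C_a = C_a(t)$, since $\lyap(\wt{\error}_n) \le \frac{L}{2}\norms{\wt{\error}_n}^2 \le L\norms{\error_n}^2 + L\norms{\wt{\error}_n - \error_n}^2$ and~\eqref{eqn:expectation-single-step-bound} bounds $\E[\indic{\badratioevent_{n,t}}\norms{\wt{\error}_n - \error_n}^2]$ by $C \stepsize_n^2 n^{2\npow} t^2 (C_{\error,n-1,t}^2 + 1) \le 2 C t^2 (\sup_m \stepsize_m^2 m^{2\npow}) B_n$. (Finiteness of each $\E[\indic{\badratioevent_{k,t}}\norms{\error_k}^2]$ and the integrability used below follow by a routine induction from~\eqref{eqn:iterate-formula} and the moment bounds of Assumption~\ref{assumption:noise-fun}.)

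The heart of the proof is the one–step bound. Since $\wt{\error}_{n+1} = \wt{\error}_n - \stepsize_n(\resid(x_n) + \noise_n)$ and $\nabla\lyap$ is $L$–Lipschitz,
\begin{equation*}
  \lyap(\wt{\error}_{n+1}) \le \lyap(\wt{\error}_n) - \stepsize_n \<\nabla\lyap(\wt{\error}_n), \resid(x_n) + \noise_n\> + \tfrac{L}{2}\stepsize_n^2 \norms{\resid(x_n)+\noise_n}^2 .
\end{equation*}
Multiply by $\indic{\badratioevent_{n,t}} \in \mc{F}_{n-1}$ (which dominates $\indic{\badratioevent_{n+1,t}}$) and condition on $\mc{F}_{n-1}$: because $\wt{\error}_n, x_n, \error_n \in \mc{F}_{n-1}$ and $\E[\noise_n \mid \mc{F}_{n-1}] = 0$, the noise cross terms vanish, and $\E[\norms{\noise_n}^2 \mid \mc{F}_{n-1}] \le 2\sigma_n^2 + 2C\norms{\error_n}^2$ by Assumption~\ref{assumption:noise-fun}, where $\sigma_n^2 \defeq \E[\norms{\noise_n(0)}^2 \mid \mc{F}_{n-1}]$ satisfies $\E[\sup_n \sigma_n^2] < \infty$. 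For the drift I would split, using Assumption~\ref{assumption:resid-strong-convex} on the first piece and Cauchy--Schwarz together with Lipschitzness of $\nabla\lyap$ on the second,
\begin{align*}
  \<\nabla\lyap(\wt{\error}_n), \resid(x_n)\>
  &= \<\nabla\lyap(\error_n), \resid(x_n)\> + \<\nabla\lyap(\wt{\error}_n) - \nabla\lyap(\error_n), \resid(x_n)\> \\
  &\ge \lambda_0 \lyap(\error_n) - L\norms{\wt{\error}_n - \error_n}\,\norms{\resid(x_n)},
\end{align*}
and then lower-bound $\lyap(\error_n) \ge \frac{\lambda}{L}\lyap(\wt{\error}_n) - \lambda\norms{\wt{\error}_n - \error_n}^2$ from $\lambda\norms{x}^2 \le \lyap(x) \le \frac{L}{2}\norms{x}^2$. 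Using that $\resid$ grows at most linearly, $\norms{\resid(x)} \lesssim 1 + \norms{x - x\opt}$ (in the convex reduction this is $L$–Lipschitzness of $\nabla f$ with $\nabla f(x\opt) = 0$), taking full expectations, bounding $\E[\indic{\badratioevent_{n,t}}\norms{\wt{\error}_n - \error_n}^2]$ via~\eqref{eqn:expectation-single-step-bound}, $\E[\indic{\badratioevent_{n,t}}\norms{\error_n}^2] \le B_n$, $\E[\sigma_n^2] \lesssim 1 \le B_n$, and Cauchy--Schwarz on $\E[\indic{\badratioevent_{n,t}}\norms{\wt{\error}_n - \error_n}\norms{\resid(x_n)}]$, one obtains
\begin{equation*}
  a_{n+1} \le (1 - c_1 \stepsize_n) a_n + \stepsize_n \delta_n B_n, \qquad c_1 \defeq \frac{\lambda_0 \lambda}{L},
\end{equation*}
where each leftover term carries an extra factor among $\stepsize_n$, $\stepsize_n n^{\npow}$, $\stepsize_n n^{2\npow}$, so $\delta_n \to 0$; indeed $\stepsize_n n^{2\npow} = \stepsize\, n^{2\npow - \steppow} \to 0$ because $2\npow < \steppow$ (which holds since $\npow < \steppow - \tfrac{1}{1+\gamma}$ and $\steppow < 1 \le \tfrac{2}{1+\gamma}$).

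To finish, I would unroll. For $n_0$ large enough that $c_1 \stepsize_n < 1$ for $n \ge n_0$, iterating from $n_0$ to $n+1$ and using that $B_j \le B_n$ for $j \le n$ (so also $a_{n_0} \le C_a B_{n_0} \le C_a B_n$), the telescoping estimate $\sum_{j=n_0}^n \big(\prod_{l=j+1}^n(1-c_1\stepsize_l)\big)\stepsize_j \le 1/c_1$, and $\prod_{j=n_0}^n(1-c_1\stepsize_j) \le \exp(-c_1\sum_{j=n_0}^n \stepsize_j)$, gives
\begin{equation*}
  a_{n+1} \le B_n \Big[ C_a \exp\Big(-c_1 \sum_{j=n_0}^n \stepsize_j\Big) + \frac{1}{c_1}\sup_{j \ge n_0}\delta_j \Big].
\end{equation*}
Given $\epsilon$, first pick $n_0$ with $c_1^{-1}\sup_{j \ge n_0}\delta_j \le \frac{\lambda\epsilon^2}{2}$ (possible as $\delta_j \to 0$), then $N \ge n_0$ with $C_a \exp(-c_1\sum_{j=n_0}^N \stepsize_j) \le \frac{\lambda\epsilon^2}{2}$ (possible since $\sum_j \stepsize_j = \infty$); as the exponential term is non-increasing in $n$, $a_{n+1} \le \lambda\epsilon^2 B_n$ for all $n \ge N$, which is the assertion. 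I expect the main obstacle to be the bookkeeping in the one–step bound: arranging the estimates relating $\lyap(\wt{\error}_n)$, $\lyap(\error_n)$, $\norms{\error_n}^2$, the mismatch $\norms{\wt{\error}_n - \error_n}^2$, and $\norms{\resid(x_n)}$ so that the coefficient multiplying $a_n$ is exactly $1 - c_1\stepsize_n$ and all remaining terms collapse into $\stepsize_n\delta_n B_n$ with $\delta_n \to 0$ — this is precisely where the exponent constraint $2\npow < \steppow$ (forcing $\stepsize_n n^{2\npow}\to 0$) and the linear growth of $\resid$ enter; the rest (nesting of events, the telescoping sum, finiteness of moments) is standard.
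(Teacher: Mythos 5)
Your proposal is correct, and it follows the same overall strategy as the paper: a one-step Lyapunov recursion for $\lyap(\wt{\error}_n)$ restricted to the event $\badratioevent_{n,t}$, followed by unrolling and using $\sum_n \stepsize_n = \infty$ to kill the initial condition. There are two genuine differences in execution worth noting. First, the paper keeps the drift term as $\<\nabla\lyap(\wt{\error}_n), \resid(\wt{x}_n)\>$ and applies Assumption~\ref{assumption:resid-strong-convex} at the corrected point $\wt{x}_n$ (pushing the mismatch into $\resid(x_n) - \resid(\wt{x}_n)$, controlled via Lipschitzness of $\resid$ and the $\badratio_n$ machinery), yielding the contraction factor $1 - \lambda_0\stepsize_n + C\stepsize_n^2 n^\npow$; you instead apply the assumption at $x_n$ and transfer $\lyap(\error_n)$ back to $\lyap(\wt{\error}_n)$ through the mismatch $\norms{\wt{\error}_n - \error_n}^2$, which costs you the weaker constant $c_1 = \lambda_0\lambda/L$ but is equally valid. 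Second, and more substantively, the paper unrolls from $k = 1$, producing the products $b_l^k$ and the weighted sum $\sum_k \stepsize_k^2 k^\npow \exp(-c(n^{1-\steppow} - k^{1-\steppow}))$, whose decay requires the technical gamma-integral computations of Lemmas~\ref{lemma:b-scalars-small} and~\ref{lemma:funny-zero-sum}; your device of isolating a vanishing prefactor $\delta_n$ multiplying $\stepsize_n B_n$ and unrolling only from a late index $n_0$, so that the elementary telescoping bound $\sum_{j}\bigl(\prod_{l>j}(1 - c_1\stepsize_l)\bigr)\stepsize_j \le 1/c_1$ suffices, sidesteps that machinery entirely. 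The one extra ingredient your route needs --- the a priori bound $a_{n_0} \le C_a B_{n_0}$ for the starting point --- you correctly supply from inequality~\eqref{eqn:expectation-single-step-bound} and the boundedness of $\stepsize_n^2 n^{2\npow}$, and importantly your recursion never presupposes $\sup_n \E[\indic{\badratioevent_{n,t}}\norms{\error_n}^2] < \infty$, so no circularity is introduced into the bootstrapping argument that follows the lemma.
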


Now,
Lemma~\ref{lemma:wrong-is-right},
inequality~\eqref{eqn:expectation-single-step-bound}, and the fact that the
stepsize power $\steppow > \npow$ (recall the
interval~\eqref{eqn:delay-moment-interval}) immediately yields that for any
$\epsilon > 0$ and $t \in \R$ there is some $N = N(\epsilon, t) \in \N$ such
that $n \ge N$ implies
\begin{equation*}
  \E\left[\indic{\badratioevent_{n,t}} \norms{\wt{\error}_n - \error_n}^2
    \right]
  \le \epsilon^2 \max_{k < n} \max\left\{1,
  \E\left[\indic{\badratioevent_{k,t}} \norms{\error_k}^2\right]\right\}.
\end{equation*}
We combine these inequalities and Lemma~\ref{lemma:smaller-than-past} to
argue that under Assumption~\ref{assumption:resid-strong-convex}, we have
$\sup_k C_{\error,k,t}^2 = \sup_k \E[\indic{\badratioevent_{k,t}}
  \norm{\error_k}^2] < \infty$. Indeed, choose $\epsilon > 0, t \in \R$ and
$N = N(\epsilon, t)$ such that $n \ge N$ implies
\begin{equation*}
  \E\left[\norms{\wt{\error}_n - \error_n}^2\indic{\badratioevent_{n,t}}\right]
  \le \frac{\epsilon^2}{4} \max_{k < n}
  \max\left\{1, \E\left[\indic{\badratioevent_{k,t}}\norms{\error_k}^2\right]
  \right\}
\end{equation*}
and
\begin{equation*}
  \E\left[\indic{\badratioevent_{n,t}} \norms{\wt{\error}_n}^2\right]
  \le \frac{\epsilon^2}{4}
  \max_{k < n} \max\left\{1,
  \E\left[\indic{\badratioevent_{k,t}}\norms{\error_k}^2\right]\right\}.
\end{equation*}
Then
\begin{align*}
  \E\left[\indic{\badratioevent_{n,t}} \norm{\error_n}^2 \right]
  & \le 2 \E\left[\indic{\badratioevent_{n,t}}
    \norms{\error_n - \wt{\error}_n}^2\right]
  + 2 \E\left[\indic{\badratioevent_{n,t}} \norms{\wt{\error}_n}^2\right] \\
  & \le \epsilon^2
  \max_{k < n} \max\left\{1, \E\left[\indic{\badratioevent_{n,t}}
    \norms{\error_k}^2\right]\right\}.
\end{align*}
Repeating this chain of inequalities for all $k$ such that
$N \le k < n$, we find that
\begin{equation*}
  \E\left[\indic{\badratioevent_{n,t}} \norm{\error_n}^2 \right]
  \le \epsilon^2 \max_{k \le N} \max\left\{1, 
  \E\left[\indic{\badratioevent_{k,t}} \norms{\error_k}^2 \right]\right\}
\end{equation*}
for all $n > N$. As $\max_{k \le N} \E[\norm{\error_k}^2] < \infty$ for any
finite $N$, this shows that there exists a constant $C_{\error,t} < \infty$
such that $\sup_k \E[\indic{\badratioevent_{k,t}} \norm{\error_k}^2] \le
C_\error^2$ whenever Assumption~\ref{assumption:resid-strong-convex} holds
and the stepsizes $\stepsize_k$ are chosen such that $\stepsize_k =
\stepsize k^{-\steppow}$ for $\steppow \in (\half, 1)$ and satisfying $\npow
< \steppow - \half$.  This completes the proof of
Theorem~\ref{theorem:nonlinear}.

\section{Discussion and conclusions}

In this paper, we have analyzed an asynchronous gradient method, based on
\citeauthor{NiuReReWrNi11}'s \hogwild~\cite{NiuReReWrNi11}, for the solution
of stochastic convex optimization and variational equality problems.  Our
work shows particularly that asynchrony introduces essentially negligible
penalty for stochastic optimization problems under standard optimization
assumptions, which can be leveraged in the development of extremely fast
optimization procedures.  Our experimental results in
Section~\ref{sec:experiments} show that there is still work to be done in
terms of a deep understanding of implementation of these methods. In
particular, even without inherent competition for locks or other
synchronization resources in the computer, there can be competition for
other resources, such as memory access. As Table~\ref{table:cache-fun}
demonstrates, even moderately careful control of memory accesses can be
extremely beneficial, and without it, asynchronous methods do not enjoy the
performance benefits made possible by multi-core and multi-processor
systems. It will thus be beneficial, in future work we hope to undertake, to
develop an understanding of memory access and use similar to that now
well-known in the scientific computing literature (see, for example,
\citet{BallardDeHoSc11}). This understanding will greatly improve the
practical effectiveness of stochastic and asynchronous methods.

\appendix


\section{Technical proofs for Theorem~\ref{theorem:nonlinear}}

In this appendix, we collect the technical proofs required for
Theorem~\ref{theorem:nonlinear}. We also state a few additional technical
lemmas.

\begin{lemma}
  \label{lemma:errors-not-so-bad}
  Define $C_{\error,n,t} \defeq \max_{k \le n} \E[\indic{\badratioevent_{n,t}}
    \norm{\error_k}^2]$.
  If Assumption~\ref{assumption:resid-strong-convex} holds,
  there is a constant $c < \infty$ independent of $n$ and $C_{\error,n,t}$ such
  that for any $l \in \R_+$,
  \begin{equation*}
    \E\left[\badratio_n^l \indic{\badratioevent_{n,t}}
      \sum_{k = n - n^\npow}^{n - 1}
      \norm{\resid(x_k) + \noise_k}^2 \right]
    \le c \cdot t^l n^\npow (C_{\error,n-1,t}^2 + 1).
  \end{equation*}
  If Assumption~\ref{assumption:resid-lipschitz} holds,
  there is a constant $c < \infty$ independent of $n$ such that
  \begin{equation*}
    \E\left[\sum_{k = n - n^\npow}^{n - 1}
      \norm{\resid(x_k) + \noise_k}^2 \right]
    \le c n^\npow.
  \end{equation*}
\end{lemma}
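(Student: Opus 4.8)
The plan is to reduce both bounds to estimates on the conditional second moment $\E[\norms{\resid(x_k) + \noise_k}^2 \mid \mc{F}_{k-1}]$, and then to sum over the $O(n^\npow)$ indices $k$ in the window $[n - n^\npow, n-1]$. Since $x_k \in \mc{F}_{k-1}$ and $\E[\noise_k \mid \mc{F}_{k-1}] = 0$,
\begin{equation*}
  \E\big[\norms{\resid(x_k) + \noise_k}^2 \mid \mc{F}_{k-1}\big]
  = \norms{\resid(x_k)}^2 + \E\big[\norms{\noise_k}^2 \mid \mc{F}_{k-1}\big].
\end{equation*}
Under Assumption~\ref{assumption:resid-lipschitz} the right-hand side is at most $2C^2$, since $\norm{\resid(x_k)} \le C$ and $\E[\norms{\noise_k}^2 \mid \mc{F}_{k-1}] \le C^2$; taking expectations and summing the at most $n^\npow + 1$ terms of the window proves the second display with, e.g., $c = 4C^2$, and neither the indicator nor the weight $\badratio_n^l$ plays a role in this case.

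For Assumption~\ref{assumption:resid-strong-convex} I would first record two pointwise bounds. The noise decomposition $\noise_k = \noise_k(0) + \noisier_k(x_k)$ of Assumption~\ref{assumption:noise-fun} gives $\E[\norms{\noise_k}^2 \mid \mc{F}_{k-1}] \le 2B + 2C \norms{\error_k}^2$, where $B \defeq \sup_j \E[\norms{\noise_j(0)}^2 \mid \mc{F}_{j-1}]$ satisfies $\E[B] < \infty$; $B$ need not be bounded pointwise, but it only ever enters multiplied by an indicator, so $\E[\indic{\cdot}\, B] \le \E[B]$ suffices. Second, $\resid$ has at most linear growth, $\norms{\resid(x)} \le L_0 \norms{x - x\opt}$ for some finite $L_0$: in the convex instantiation $\resid = \nabla f$ this is precisely the $L$-Lipschitz continuity of $\nabla f$, and in general it combines the local quadratic expansion of Assumption~\ref{assumption:residual-quadratic} near $x\opt$ with at-most-linear growth of $\resid$ away from $x\opt$. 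Combining,
\begin{equation*}
  \E\big[\norms{\resid(x_k) + \noise_k}^2 \mid \mc{F}_{k-1}\big]
  \le (L_0^2 + 2C)\, \norms{\error_k}^2 + 2B.
\end{equation*}

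The remaining point is the measurability of the weights $\badratio_n^l \indic{\badratioevent_{n,t}}$, which are $\mc{F}_{n-1}$-measurable but not $\mc{F}_{k-1}$-measurable for $k < n$. Since $\badratio_n$ is non-decreasing, $\badratioevent_{n,t} \subseteq \badratioevent_{k,t}$ for all $k \le n$, and on $\badratioevent_{n,t}$ we have $\badratio_n \le t$; hence $\badratio_n^l \indic{\badratioevent_{n,t}} \le t^l \indic{\badratioevent_{k,t}}$ with $\indic{\badratioevent_{k,t}} \in \mc{F}_{k-1}$. For each $k \le n-1$,
\begin{align*}
  \E\big[\badratio_n^l \indic{\badratioevent_{n,t}} \norms{\resid(x_k) + \noise_k}^2\big]
  & \le t^l\, \E\Big[\indic{\badratioevent_{k,t}}\,
    \E\big[\norms{\resid(x_k) + \noise_k}^2 \mid \mc{F}_{k-1}\big]\Big] \\
  & \le t^l \big((L_0^2 + 2C)\, C_{\error,n-1,t}^2 + 2\, \E[B]\big),
\end{align*}
where the last line uses $\E[\indic{\badratioevent_{k,t}} \norms{\error_k}^2] \le C_{\error,n-1,t}^2$ for $k \le n-1$, from the definition of $C_{\error,n-1,t}$. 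Summing over the at most $n^\npow + 1$ indices $k \in [n - n^\npow, n-1]$ and absorbing everything into one constant $c$, independent of $n$ and of $C_{\error,n-1,t}$, gives the first display.

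The step I expect to require the most care is precisely this last relaxation: passing from $\indic{\badratioevent_{n,t}}$ to the $\mc{F}_{k-1}$-measurable $\indic{\badratioevent_{k,t}}$ before pulling the conditional expectation inside, and checking that it is still tight enough to land on $C_{\error,n-1,t}^2$ rather than a larger average over a bigger event. The linear-growth bound on $\resid$ in the fully abstract (non-convex) setting is the other point that needs a sentence of justification, being automatic only in the convex instantiation where $\resid = \nabla f$.
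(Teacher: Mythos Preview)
Your argument is correct and mirrors the paper's proof almost exactly: bound $\badratio_n^l \indic{\badratioevent_{n,t}} \le t^l \indic{\badratioevent_{k,t}}$, use the noise decomposition of Assumption~\ref{assumption:noise-fun} to get $\E[\norms{\noise_k}^2 \mid \mc{F}_{k-1}] \lesssim \norms{\error_k}^2 + B$, use linear growth of $\resid$, and sum the $O(n^\npow)$ terms. Your handling of the random bound $B = \sup_j \E[\norms{\noise_j(0)}^2 \mid \mc{F}_{j-1}]$ is in fact more careful than the paper, which silently treats it as a constant; and your caveat about the global linear growth of $\resid$ is well taken---the paper invokes $\norm{\resid(x)} \le L \norm{x - x\opt}$ without listing it among the standing assumptions for the nonlinear theorem, so this is an implicit hypothesis in both proofs.
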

\begin{proof}
  We begin with the result under
  Assumption~\ref{assumption:resid-strong-convex}, where we use
  $\E[\norms{\error_k}^2 \indic{\badratioevent_{n,t}}] \le
  C_{\error,n-1,t}^2$ for $k \le n-1$. We also know that
  \begin{align*}
    \norm{\resid(x_k)}^2 \lesssim \norm{\error_k}^2
    ~~~ \mbox{and} ~~~
    \E[\norms{\noise_k}^2 \mid \mc{F}_{k-1}]
    \lesssim \norm{\error_k}^2 + 1,
  \end{align*}
  where we have used Assumption~\ref{assumption:noise-fun}, and thus we have
  $\E[\E[\indic{\badratioevent_{k,t}} \norms{\noise_k}^2 \mid
      \mc{F}_{k-1}]] \le c (C_{\error,n-1,t}^2 + 1)$.  In addition, we have
  $\norm{\resid(x)} = \norm{\resid(x) - \resid(x\opt)} \le L\norm{x -
    x\opt}$ and $\badratio_n^l \indic{\badratioevent_{n,t}} \le t^l
  \indic{\badratioevent_{n,t}}$, so
  \begin{align*}
    \E\left[
      \badratio_n \indic{\badratioevent_{n,t}} \sum_{k = n - n^\npow}^{n - 1}
      \norms{\resid(x_k) + \noise_k}^2\right]
    & \le 
    2 t^l \sum_{k = n - n^\npow}^{n - 1}
    \E\left[\indic{\badratioevent_{n,t}}
      \norm{\resid(x_k)}^2 + \indic{\badratioevent_{n,t}}
      \norms{\noise_k}^2\right] \\
    & \le 2 C t^l n^\npow
    \left(L C_{\error,n-1,t}^2 + c (C_{\error,n-1,t}^2 + 1)\right),
  \end{align*}
  where we have used that $\indic{\badratioevent_{n,t}} \le
  \indic{\badratioevent_{n-1,t}}$ for all $n$.
  Under Assumption~\ref{assumption:resid-lipschitz}, the result is simpler:
  we have simply that $\E[\norms{\noise_k}^2 \mid \mc{F}_{k-1}]
  \le C$ and $\norm{\resid(x_k)} \le C$, giving the result.
\end{proof}

We also give two technical results involving integral convergence.
\begin{lemma}
  \label{lemma:funny-gamma-integral}
  Let $c > 0$ and $\kappa \in (0, 1)$ be constants and $b \ge a > 0$. Then
  \begin{equation*}
    \int_a^b \exp\left(-c (t^\kappa - a^\kappa)\right) dt
    \le \frac{\max\{2^{\frac{1 - \kappa}{\kappa} - 1}, 1\}}{
      \kappa c}
    \left[c^\frac{\kappa - 1}{\kappa}
      \Gamma\Big(\frac{1}{\kappa}\Big)
      + a^{1 - \kappa}\right].
  \end{equation*}
\end{lemma}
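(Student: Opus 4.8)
The plan is to reduce the statement to a clean inequality for the upper incomplete gamma function. First I would use positivity of the integrand to extend the upper limit, writing $\int_a^b \exp(-c(t^\kappa - a^\kappa))\,dt \le e^{ca^\kappa}\int_a^\infty e^{-ct^\kappa}\,dt$. The change of variables $u = ct^\kappa$ turns the remaining integral into $\frac{1}{\kappa c^{1/\kappa}}\,\Gamma(\tfrac1\kappa, ca^\kappa)$, where $\Gamma(p,x) = \int_x^\infty e^{-u}u^{p-1}\,du$ denotes the upper incomplete gamma function. Writing $p = 1/\kappa > 1$ and $x = ca^\kappa$, it then suffices to prove the elementary bound $e^x\,\Gamma(p,x) \le \max\{2^{p-2},1\}\,(\Gamma(p) + x^{p-1})$, after which unwinding the substitutions---together with the identities $1/\kappa - 2 = (1-\kappa)/\kappa - 1$, $(ca^\kappa)^{1/\kappa - 1} = c^{(1-\kappa)/\kappa}a^{1-\kappa}$, and $c^{-1/\kappa} = c^{(\kappa-1)/\kappa}/c$---recovers exactly the claimed right-hand side $\frac{\max\{2^{(1-\kappa)/\kappa - 1},1\}}{\kappa c}\big(c^{(\kappa-1)/\kappa}\Gamma(\tfrac1\kappa) + a^{1-\kappa}\big)$.

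To prove the reduced bound I would use the representation $e^x\Gamma(p,x) = \int_0^\infty e^{-v}(v+x)^{p-1}\,dv$, obtained from the shift $v = u - x$ inside the defining integral. Then I bound $(v+x)^{p-1}$ pointwise: for $1 < p \le 2$ the exponent $p-1$ lies in $(0,1]$, so $t\mapsto t^{p-1}$ is subadditive on $[0,\infty)$ and $(v+x)^{p-1} \le v^{p-1} + x^{p-1}$, while $\max\{2^{p-2},1\}=1$ in this range; for $p > 2$ the exponent exceeds $1$, so convexity gives $(v+x)^{p-1} \le 2^{p-2}(v^{p-1}+x^{p-1})$ via the power-mean inequality $(a+b)^q \le 2^{q-1}(a^q+b^q)$ valid for $q \ge 1$. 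Integrating either bound against $e^{-v}\,dv$ and using $\int_0^\infty e^{-v}v^{p-1}\,dv = \Gamma(p)$ and $\int_0^\infty e^{-v}\,dv = 1$ yields $e^x\Gamma(p,x) \le \max\{2^{p-2},1\}(\Gamma(p)+x^{p-1})$, as required. (It is worth noting both terms are genuinely needed: as $x\to 0$ the left side tends to $\Gamma(p)$, and as $x\to\infty$ it grows like $x^{p-1}$, so the shape of the bound is forced.)

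There is no substantive obstacle here---the whole argument is a chain of substitutions and one elementary convexity estimate---but the single point requiring care is obtaining the constant $2^{p-2}$ rather than $2^{p-1}$ in the regime $p > 2$: a cruder estimate such as $(v+x)^{p-1} \le (2\max\{v,x\})^{p-1}$ loses a factor of $2$ and would not match the stated bound, so the sharp form $(a+b)^q \le 2^{q-1}(a^q+b^q)$ of the power-mean inequality is essential. The remaining work is just bookkeeping the exponents of $c$ and $a$ through the change of variables so that the final constant appears in the paper's form $2^{(1-\kappa)/\kappa - 1}$.
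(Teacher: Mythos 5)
Your proof is correct and is essentially the paper's own argument in different packaging: your two substitutions $u = ct^\kappa$ followed by $v = u - ca^\kappa$ compose to the paper's single change of variables $u = c(t^\kappa - a^\kappa)$, and the key estimate $(v+x)^{p-1} \le \max\{2^{p-2},1\}(v^{p-1}+x^{p-1})$ is exactly the subadditivity/power-mean split the paper applies to $(u/c + a^\kappa)^{(1-\kappa)/\kappa}$. The detour through the upper incomplete gamma function changes nothing substantive.
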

See Appendix~\ref{sec:proof-funny-gamma-integral} for a proof of
Lemma~\ref{lemma:funny-gamma-integral}.  The final technical result we use
also gives a bound on (essentially) another gamma integral.
\begin{lemma}
  \label{lemma:funny-zero-sum}
  Let $\steppow \in (\half, 1)$ and $\npow < \steppow - \half$.
  Then
  \begin{equation*}
    \lim_{n \to \infty}\,
    \sum_{k=1}^n k^{\npow - 2 \steppow}
    \exp\left(-c (n^{1 - \steppow} - k^{1 - \steppow})\right)
    = 0.
  \end{equation*}
\end{lemma}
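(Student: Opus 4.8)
The plan is to bound the sum by a corresponding integral and then invoke Lemma~\ref{lemma:funny-gamma-integral} with $\kappa = 1 - \steppow \in (0, \half)$. First I would write $c_n \defeq \sum_{k=1}^n k^{\npow - 2\steppow} \exp(-c(n^{1-\steppow} - k^{1-\steppow}))$ and observe that each summand is nonnegative; since $t \mapsto t^{\npow - 2\steppow}$ is decreasing (as $\npow - 2\steppow < 0$, because $\npow < \steppow - \half < \steppow < 2\steppow$) while $t \mapsto \exp(-c(n^{1-\steppow} - t^{1-\steppow}))$ is increasing in $t$ on $[1, n]$, some care is needed, but a crude split handles it: split the sum at $n/2$. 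For $k \le n/2$, the exponential factor is at most $\exp(-c(n^{1-\steppow} - (n/2)^{1-\steppow})) = \exp(-c n^{1-\steppow}(1 - 2^{-(1-\steppow)}))$, which decays faster than any polynomial in $n$, and the remaining sum $\sum_{k \le n/2} k^{\npow - 2\steppow}$ grows at most polynomially (indeed it is bounded by $\zeta(2\steppow - \npow)$ if $2\steppow - \npow > 1$, or by $O(n^{1 + \npow - 2\steppow})$ otherwise), so this piece tends to $0$.

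For the main piece $n/2 < k \le n$, I would bound $k^{\npow - 2\steppow} \le (n/2)^{\npow - 2\steppow} \lesssim n^{\npow - 2\steppow}$ and compare the sum $\sum_{n/2 < k \le n} \exp(-c(n^{1-\steppow} - k^{1-\steppow}))$ to the integral $\int_{n/2}^{n} \exp(-c(n^{1-\steppow} - t^{1-\steppow}))\, dt$ (the integrand is monotone, so the sum and integral differ by at most the largest term, which is $\exp(0) = 1$). By the change of variables $t \mapsto n - s$, or directly by Lemma~\ref{lemma:funny-gamma-integral} applied with $a = n/2$, $b = n$, $\kappa = 1-\steppow$, and noting $\exp(-c(n^{1-\steppow} - t^{1-\steppow})) \le \exp(-c(t^\kappa - (n/2)^\kappa)) \cdot \exp(c(n^\kappa - t^\kappa - (n^\kappa - (n/2)^\kappa)))$... actually it is cleaner to substitute $u = n - t$ and use that for $t$ near $n$, $n^{1-\steppow} - t^{1-\steppow} \approx (1-\steppow) n^{-\steppow} u$, so the integral is $\lesssim \int_0^{n/2} \exp(-c' n^{-\steppow} u)\, du \lesssim n^\steppow / c'$. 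Combining, the main piece is $\lesssim n^{\npow - 2\steppow} \cdot n^\steppow = n^{\npow - \steppow}$, which tends to $0$ precisely because $\npow < \steppow$ (guaranteed, in fact with the stronger $\npow < \steppow - \half$, by the interval~\eqref{eqn:delay-moment-interval}).

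The main obstacle is making the integral comparison and the Laplace-type asymptotic $n^{1-\steppow} - t^{1-\steppow} \asymp n^{-\steppow}(n - t)$ rigorous and uniform for $t \in [n/2, n]$; concretely one uses the mean value theorem, which gives $n^{1-\steppow} - t^{1-\steppow} = (1-\steppow)\xi^{-\steppow}(n-t)$ for some $\xi \in (t, n) \subset (n/2, n)$, hence $\xi^{-\steppow} \ge n^{-\steppow}$, yielding the clean one-sided bound $\exp(-c(n^{1-\steppow} - t^{1-\steppow})) \le \exp(-c(1-\steppow) n^{-\steppow}(n-t))$ needed above. With that inequality in hand the integral is elementary and the whole argument collapses to the exponent bookkeeping $\npow - \steppow < 0$. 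Alternatively, one could cite Lemma~\ref{lemma:funny-gamma-integral} directly with $a = 1$: it gives $\sum_{k=1}^n \exp(-c(n^{1-\steppow} - k^{1-\steppow})) \lesssim n^\steppow$ up to lower-order terms, and then the extra polynomial weight $k^{\npow - 2\steppow}$ only helps since $\npow - 2\steppow < 0$ makes the weight decreasing — but one must then be slightly more careful since the weight is not bounded by its value at $k = n$ uniformly over the whole range; the split at $n/2$ is the simplest fix.
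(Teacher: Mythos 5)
Your proposal is correct, and it follows the same overall strategy as the paper's proof: split the range at a constant fraction of $n$, and control the exponential via the concavity (equivalently, mean value theorem) bound $n^{1-\steppow} - t^{1-\steppow} \ge (1-\steppow)n^{-\steppow}(n-t)$. The difference is in which factor you discard on each piece. The paper bounds the integral by $\int_1^{an}\exp(-c(n^{1-\steppow}-u^{1-\steppow}))\,du + \int_{an}^n u^{\npow-2\steppow}\,du$, i.e.\ it drops the polynomial weight on the low piece and drops the exponential entirely on the high piece; the second integral is then $\asymp n^{1+\npow-2\steppow}$, which requires $\npow < 2\steppow - 1$ to vanish. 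You instead keep the exponential on the high piece, bound the (decreasing) weight by its value at $n/2$, and integrate the exponential to get $\lesssim n^{\npow-2\steppow}\cdot n^{\steppow} = n^{\npow-\steppow}$. This is a strictly sharper bound and needs only $\npow < \steppow$ rather than $\npow < 2\steppow - 1$ (both hold under the hypothesis $\npow < \steppow - \half$ with $\steppow > \half$, so nothing is at stake for the lemma as stated, but your version degrades more gracefully). Your treatment of the low piece---bounding the exponential by its value at the split point $n/2$ and noting superpolynomial decay beats the at-most-polynomial sum of weights---is cruder than the paper's but entirely adequate. The only cosmetic issue is the false start invoking Lemma~\ref{lemma:funny-gamma-integral} before settling on the direct substitution; the final MVT argument you give is complete and is what should appear in a written-up version.
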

\noindent
See Appendix~\ref{sec:proof-funny-zero-sum} for a proof of
Lemma~\ref{lemma:funny-zero-sum}.

\subsection{Proof of Lemma~\ref{lemma:p1-convergence}}
\label{sec:proof-p1-convergence}

We begin by using the Lipschitz continuity of the gradients of $\lyap$ to
note that
\begin{align*}
  \lefteqn{\lyap(\wt{\error}_{n + 1})
    = \lyap(\wt{\error}_n - \stepsize_n g_n)
    \le \lyap(\wt{\error}_n) - \stepsize_n \<\nabla \lyap(\wt{\error}_n), g_n\>
    + \frac{L \stepsize_n^2}{2} \norm{g_n}^2} \\
  & = \lyap(\wt{\error}_n) - \stepsize_n\<\nabla \lyap(\wt{\error}_n),
  \resid(\wt{x}_n)\>
  - \alpha_n \<\nabla \lyap(\wt{\error}_n),
  \resid(x_n) - \resid(\wt{x}_n)\>
  - \alpha_n \<\nabla \lyap(\wt{\error}_n), \noise_n\>
  + \frac{L \stepsize_n^2}{2} \norm{g_n}^2.
\end{align*}
Taking expectations conditional on $\mc{F}_{n-1}$, we have
$x_n, \wt{x}_n \in \mc{F}_{n-1}$, and $\E[\noise_n \mid \mc{F}_{n-1}] = 0$.
Moreover, we have $g_n = \resid(x_n) + \noise_n$, and thus
\begin{equation}
  \begin{split}
    \E[\lyap(\wt{\error}_{n+1}) \mid \mc{F}_{n-1}]
    & \le \lyap(\wt{\error}_n)
    - \stepsize_n \<\nabla \lyap(\wt{\error}_n), \resid(\wt{x}_n)\>
    + \frac{L \stepsize_n^2}{2} \E[\norm{\resid(x_n) + \noise_n}^2
      \mid \mc{F}_{n-1}] \\
    & \qquad + \alpha_n \norms{\nabla \lyap(\wt{\error}_n)}
    \norm{\resid(x_n) - \resid(\wt{x}_n)}.
  \end{split}
  \label{eqn:nonlinear-one-step}
\end{equation}
Using that
$\norm{\resid(x_n) + \noise_n}^2
\le 2 \norm{\resid(x_n)}^2 + 2\norm{\noise_n}^2$
and
\begin{align*}
  \norm{\resid(x_n) - \resid(\wt{x}_n)}
  & \le L \norm{x_n - \wt{x}_n}
  = L \normbigg{\sum_{k = 1}^{n-1} \stepsize_k (I - \errmat^{nk}) g_k}
  \le L \sum_{k = 1}^{n-1} \stepsize_k \norms{I - \errmat^{nk}}\norm{g_k},
\end{align*}
we find that there is a constant $C$ such that
\begin{align*}
  \E[\lyap(\wt{\error}_{n+1}) \mid \mc{F}_{n-1}] \nonumber
  & \le \lyap(\wt{\error}_n)
  - \stepsize_n \<\nabla \lyap(\wt{\error}_n), \resid(\wt{x}_n)\>
  + C \stepsize_n^2 \norm{\resid(x_n)}^2
  + C \stepsize_n^2 \E[\norm{\noise_n}^2 \mid \mc{F}_{n-1}] \\
  & \qquad + C \stepsize_n \norms{\nabla \lyap(\wt{\error}_n)}
  \sum_{k = 1}^{n - 1} \stepsize_k \norms{I - \errmat^{nk}} \norm{g_k}.
\end{align*}
Recalling the definition~\eqref{eqn:ratio-def} of the random variables
$\badratio_n$ and the selection~\eqref{eqn:delay-moment-interval} of the
power $\npow$, we have $\sum_{k=1}^{n-1} \stepsize_k \norms{I -
  \errmat^{nk}} \norm{g_k} \le \badratio_n \stepsize_n \sum_{k = n -
  n^\npow}^{n - 1} \norm{g_k}$, and using that $g_k = \resid(x_k) +
\noise_k$, we obtain
\begin{subequations}
  \begin{align}
    \E[\lyap(\wt{\error}_{n+1}) \mid \mc{F}_{n-1}] \nonumber
    & \le \lyap(\wt{\error}_n)
    - \stepsize_n \<\nabla \lyap(\wt{\error}_n), \resid(\wt{x}_n)\>
    + C \stepsize_n^2 \norm{\resid(x_n)}^2
    + C \stepsize_n^2 \E[\norm{\noise_n}^2 \mid \mc{F}_{n-1}] \\
    & \qquad + C \stepsize_n^2 \norms{\nabla \lyap(\wt{\error}_n)}
    \badratio_n \sum_{k = n - n^\npow}^{n - 1} \norm{\resid(x_k) + \noise_k}
    \label{eqn:single-step-alternate-lyap} \\
    & \le \lyap(\wt{\error}_n)
    - \stepsize_n \<\nabla \lyap(\wt{\error}_n), \resid(\wt{x}_n)\>
    + C \stepsize_n^2 \norm{\error_n}^2
    + C \stepsize_n^2 \E[\norm{\noise_n}^2 \mid \mc{F}_{n-1}] \nonumber \\
    & \qquad + C \stepsize_n^2 n^\npow \lyap(\wt{\error}_n)
    + C \stepsize_n^2 \badratio_n^2 \sum_{k = n - n^\npow}^{n-1}
    \norm{\resid(x_k) + \noise_k}^2,
    \label{eqn:single-step-alternate-lyap-v2}
  \end{align}
\end{subequations}
the final equality following because $\norm{\resid(x)} = \norm{\resid(x) -
  \resid(x\opt)} \le L \norm{x - x\opt}$, $\norm{\nabla \lyap(\error)} \le L
\norm{\error} \le (L / \sqrt{\lambda}) \sqrt{\lyap(\error)}$, and $ab \le
\half a^2 + \half b^2$ for any $a, b \in \R$.

We now use the technical Lemma~\ref{lemma:errors-not-so-bad}, which allows
us to control the error terms in
inequality~\eqref{eqn:single-step-alternate-lyap-v2}.  Indeed, by
Lemma~\ref{lemma:errors-not-so-bad} and our assumption that $C_{\error,t}^2
= \sup_k \E[\indic{\badratioevent_{k,t}} \norms{\error_k}^2] < \infty$ if
Assumption~\ref{assumption:resid-strong-convex} holds, we obtain
\begin{equation}
  \sum_{n = 1}^\infty \stepsize_n^2 \E\left[
    \badratio_n^2 \indic{\badratioevent_{n,t}} \sum_{k = n - n^\npow}^{n - 1}
    \norms{\resid(x_k) + \noise_k}^2\right]
  \le c t^2 (C_{\error,t}^2 + 1) \sum_{n = 1}^\infty \stepsize_n^2 n^\npow
  \lesssim \int_1^\infty u^{-2 \steppow + \npow} du
  < \infty,
  \label{eqn:sum-bad-ratios}
\end{equation}
the final inequality holding when $2 \steppow - \npow > 1$, or $\npow < 2
\steppow - 1$.  In particular, the Robbins-Siegmund convergence theorem
(Lemma~\ref{lemma:robbins-siegmund}) applies, as we can write (recall
inequality~\eqref{eqn:single-step-alternate-lyap} and that
$\indic{\badratioevent_{n,t}} \le \indic{\badratioevent_{n-1,t}}$)
\begin{equation*}
  \E[\indic{\badratioevent_{n,t}} \lyap(\wt{\error}_{n+1}) \mid \mc{F}_{n-1}]
  \le (1 + \beta_{n-1}) \indic{\badratioevent_{n-1,t}} \lyap(\wt{\error}_n)
  + \kappa_{n-1} - \varepsilon_{n-1},
\end{equation*}
where $\beta_{n-1} = C \alpha_n^2 n^\npow$, $\varepsilon_n = 
\indic{\badratioevent_{n,t}} \alpha_n
\<\nabla \lyap(\wt{\error}_n), \resid(\wt{x}_n)\>$, and
\begin{equation*}
  \kappa_{n-1} = \indic{\badratioevent_{n,t}}
  \left[C \alpha_n^2 \norm{\resid(x_n)}^2
    + C \alpha_n^2 \E[\norms{\noise_n}^2 \mid \mc{F}_{n-1}]
    + C \alpha_n^2 \badratio_n^2
    \sum_{k = n - n^\npow}^{n - 1} \norm{\resid(x_k)
      + \noise_k}^2\right]
\end{equation*}
are all $\mc{F}_{n-1}$-measurable.  Moreover, $\sum_n \beta_n \lesssim
\sum_{n=1}^\infty n^{\npow -2 \steppow} < \infty$ because $\npow < 2
\steppow - 1$, and $\sum_n \E[\kappa_n] < \infty$ by the fact that
$\E[\E[\norm{\noise_n}^2 \mid \mc{F}_{n-1}] \lesssim \E[\norm{\error_n}^2 +
    1]$ (this is Assumption~\ref{assumption:noise-fun}) coupled with
  Lemma~\ref{lemma:errors-not-so-bad} and
  inequality~\eqref{eqn:sum-bad-ratios}.  We thus conclude that
\begin{equation*}
  \indic{\badratioevent_{n,t}} \lyap(\wt{\error}_n) \cas \lyap_t
  ~~~ \mbox{and} ~~~
  \sum_{n = 1}^\infty \stepsize_n \indic{\badratioevent_{n,t}}
  \<\nabla \lyap(\wt{\error}_n),
  \resid(\wt{x}_n)\> < \infty
\end{equation*}
with probability $1$ whenever Assumption~\ref{assumption:resid-strong-convex}
holds in addition to the assumptions of the lemma.

In the somewhat simpler case that
Assumption~\ref{assumption:resid-lipschitz} holds, we may simply remove all
indicator functions $\indic{\badratioevent_{n,t}}$, as
Lemma~\ref{lemma:errors-not-so-bad} shows
that we may replace inequality~\eqref{eqn:sum-bad-ratios}
with
\begin{equation*}
  \sum_{n = 1}^\infty \stepsize_n^2 \sum_{k = n - n^\npow}^{n - 1}
  \E[\norm{\resid(x_k) - \noise_k}^2]
  \lesssim C \sum_{n = 1}^\infty \stepsize_n^2 n^\npow
  \lesssim \int_1^\infty u^{-2 \steppow + \npow} du < \infty,
\end{equation*}
while $\sup_n \badratio_n < \infty$ with probability $1$.

\subsection{Proof of Lemma~\ref{lemma:wrong-is-right}}
\label{sec:proof-wrong-is-right}

We may write the difference
\begin{equation*}
  \wt{\error}_n - \error_n =
  \wt{x}_n - x_n
  = \sum_{k = 1}^{n-1} \stepsize_k (\errmat^{nk} - I) g_k.
\end{equation*}
Recalling the definition~\eqref{eqn:ratio-def} of $\badratio_n$
and our choice~\eqref{eqn:delay-moment-interval} of $\npow$,
this representation guarantees that
\begin{equation}
  \label{eqn:one-term-error}
  \norms{\wt{\error}_n - \error_n}
  \le \badratio_n \stepsize_n \sum_{k = n - n^\npow}^{n-1} \norm{g_k},
\end{equation}
and so we have
\begin{equation}
  \label{eqn:summed-ltwo-distances-ratioed}
  \frac{1}{\sqrt{n}} \sum_{k=1}^n \norms{\wt{\error}_k - \error_k}
  \le \frac{\badratio_n}{\sqrt{n}} \sum_{k = 1}^n
  \stepsize_k \bigg(\sum_{i = k - k^\npow}^{k-1} \norm{g_i}
  \bigg).
\end{equation}

Let us first show that the
quantity~\eqref{eqn:summed-ltwo-distances-ratioed} is well-behaved in the
simpler case of Assumption~\ref{assumption:resid-lipschitz}. Indeed, we have
\begin{equation*}
  \E\left[\frac{1}{\sqrt{n}} \sum_{k = 1}^n \stepsize_k
    \sum_{i = k - k^\npow}^{k-1} \norm{g_i}\right]
  \lesssim n^{-\half} \sum_{k=1}^n \stepsize_k k^\npow
  \lesssim n^{-\half} \int_1^n u^{\npow - \steppow} du
  \asymp n^{\half + \npow - \steppow},
\end{equation*}
which tends to zero if and only if $\steppow > \npow + \half$.  Then
inequality~\eqref{eqn:summed-ltwo-distances-ratioed} implies $n^{-\half}
\sum_{k=1}^n \norms{\wt{\error}_n - \error_n} \le \badratio_n Z_n$, where
$Z_n \clp{1} 0$ and $\sup_n \badratio_n < \infty$ with probability $1$ by
Lemma~\ref{lemma:crazy-ratios}; thus, the
convergence~\eqref{eqn:wrong-is-right} holds under
Assumption~\ref{assumption:resid-lipschitz}.

We turn to the somewhat more challenging case that
Assumption~\ref{assumption:resid-strong-convex} holds and that for our
choice of bound $t$ on $\badratio_n$, there exist constants
$C_{\error,n,t}^2 = \max_{k \le n} \E[\indic{\badratioevent_{k,t}}
  \norm{\error_k}^2]$ such that $C_{\error,t} = \sup_n C_{\error,n,t} <
\infty$.  In this case, inequality~\eqref{eqn:summed-ltwo-distances-ratioed}
and the definition~\eqref{eqn:ratio-event-def} of the event
$\badratioevent_{n,t} = \{ \badratio_{n,t} \le t\}$ imply
\begin{align*}
  \E\bigg[\indic{\badratioevent_{n,t}}
    \frac{1}{\sqrt{n}} \sum_{k=1}^n \norms{\wt{\error}_k - \error_k}
    \bigg]
  & \le \frac{t}{\sqrt{n}} \sum_{k=1}^n \stepsize_k
  \E\bigg[\indic{\badratioevent_{n,t}} \sum_{i = k - k^\npow}^{k-1}
    \norm{g_i}\bigg]
\end{align*}
Now, we note that for $k \le n$, we have $\badratioevent_{k,t} \supset
\badratioevent_{n,t}$ so that $\indic{\badratioevent_{n,t}} \le
\indic{\badratioevent_{k,t}}$, and
\begin{align}
  \E[\norm{g_k}^2 \indic{\badratioevent_{n,t}}]
  \le \E[\norm{g_k}^2 \indic{\badratioevent_{k,t}}]
  & \le 2 \E[\norm{\resid(x_k)}^2 \indic{\badratioevent_{k,t}}]
  + 2 \E[ \norm{\noise_k}^2 \indic{\badratioevent_{k,t}}] \nonumber \\
  & \lesssim C_{\error,n-1,t}^2 + C_{\error,n-1,t}^2 + 1
  \label{eqn:gradients-undelayed-small}
\end{align}
by Assumption~\ref{assumption:noise-fun} and that $\badratioevent_{k,t} \in
\mc{F}_{k-1}$. Thus we have $\E[\norm{g_k}^2 \indic{\badratioevent_{n,t}}]
\lesssim C_{\error,n-1,t} + 1$, and by Jensen's inequality and
inequality~\eqref{eqn:one-term-error}, we have
\begin{align*}
  \E\left[\indic{\badratioevent_{n,t}} \norms{\wt{\error}_n - \error_n}^2
    \right]
  & \le t^2 \stepsize_n^2 n^\npow \sum_{k = n - n^\npow}^{n-1}
  \E\left[\indic{\badratioevent_{n,t}} \norm{g_k}^2\right]
  \le C \, t^2 \stepsize_n^2 n^{2 \npow} (C_{\error,n-1,t}^2 + 1),
\end{align*}
where $C$ is some universal constant, by the
bound~\eqref{eqn:gradients-undelayed-small}.  This gives
statement~\eqref{eqn:expectation-single-step-bound} of the lemma.  To obtain
the convergence guarantee~\eqref{eqn:wrong-is-right} in the case of
Assumption~\ref{assumption:resid-strong-convex} and that $\sup_n
C_{\error,n,t} < \infty$, note that
\begin{align*}
  \sum_{k = 1}^n \stepsize_k \E\bigg[\indic{\badratioevent_{n,t}}
    \sum_{i = k - k^\npow}^{k-1} \norm{g_i}\bigg]
  & \lesssim
  \sum_{k = 1}^n \stepsize_k k^{\npow} (C_{\error,k-1,t} + 1) \\
  & \lesssim (C_{\error,n-1,t} + 1)
  \int_1^n u^{\npow -\steppow} du
  \asymp (C_{\error,n-1,t} + 1)
  n^{1 + \npow - \steppow}.
\end{align*}
In particular, we have
\begin{equation*}
  \E\left[\indic{\badratioevent_{n,t}}
    \frac{1}{\sqrt{n}} \sum_{k=1}^n
    \norms{\wt{\error}_k
      - \error_k}\right]
  \lesssim (C_{\error,n-1,t} + 1) t
  n^{\half + \npow - \steppow},
\end{equation*}
which tends to $0$ if $\steppow > \npow + \half$. Thus, we have shown
that for any $\epsilon > 0$, we have for any $t > 0$ that
\begin{equation}
  \label{eqn:n-half-error-probability}
  \lim_{n \to \infty}
  \P\left(\badratioevent_{n, t} ~ \mbox{and} ~
  n^{-\half} \sum_{k=1}^n \norms{\wt{\error}_k - \error_k}
  > \epsilon \right) = 0.
\end{equation}

We now use expression~\eqref{eqn:n-half-error-probability} to get the
desired convergence result in the lemma. Let $D_n = n^{-\half} \sum_{k=1}^n
\norms{\wt{\error}_k - \error_k}$ be shorthand for our error sum. Fix
$\delta > 0$ and let $t$ be large enough that $\P(\badratioevent_{\infty,
  t}) \ge 1 - \delta$, which we know is possible by
Lemma~\ref{lemma:crazy-ratios}. Then as $\badratioevent_{n,t} \subset
\badratioevent_{\infty, t}$ by definition, we have
\begin{align*}
  \P\left(D_n > \epsilon\right)
  & \le \P\left(\badratioevent_{\infty,t} ~ \mbox{and} ~
  D_n > \epsilon\right)
  + \P(\badratioevent_{\infty,t}^c)
  \le \P\left(\badratioevent_{n,t} ~ \mbox{and} ~
  D_n > \epsilon\right)
  + \delta.
\end{align*}
Taking the limit as $n \to \infty$, we find from
expression~\eqref{eqn:n-half-error-probability} that
\begin{equation*}
  \limsup_{n \to \infty}
  \P\left(\frac{1}{\sqrt{n}}
  \sum_{k=1}^n \norms{\wt{\error}_k - \error_k} > \epsilon \right)
  \le \delta,
\end{equation*}
and as $\delta > 0$ was arbitrary, we have the desired
convergence guarantee~\eqref{eqn:wrong-is-right}.

Lastly, we show that
expression~\eqref{eqn:resid-cas-error-true} holds under the
conditions of the lemma.  We use inequality~\eqref{eqn:one-term-error} and
the Borel-Cantelli lemma for this.  Under
Assumption~\ref{assumption:resid-strong-convex} and the additional condition
that $C_{\error,t}^2 = \sup_n \E[\indic{\badratioevent_{n,t}}
  \norms{\error_n}^2] < \infty$ for all $t$
or Assumption~\ref{assumption:resid-lipschitz},
we have
\begin{align*}
  \P\left(\badratioevent_{n,t}, ~ \norms{\wt{\error}_n - \error_n} >
  \epsilon\right)
  & \le \epsilon^{-2} \E\left[\badratio_n^2 \stepsize_n^2
    \left(\sum_{k = n - n^\npow}^{n-1} \norm{g_k}\right)^2
    \indic{\badratioevent_{n,t}}\right] \\
  & \stackrel{(i)}{\le}
  \frac{t^2 \stepsize_n^2 n^\npow \sum_{k = n - n^\npow}^{n-1}
    \E[\norm{g_k}^2 \indic{\badratioevent_{n,t}}]}{\epsilon^2}
  \stackrel{(ii)}{\lesssim} \frac{t^2 \stepsize_n^2 n^{2 \npow}}{\epsilon^2}.
\end{align*}
Here inequality $(i)$ follows from Jensen's inequality and the fact that
$\badratio_n \indic{\badratioevent_{n,t}} \le t$, and inequality $(ii)$
follows either by the bound~\eqref{eqn:gradients-undelayed-small} (when
Assumption~\ref{assumption:resid-strong-convex} holds) or because
$\E[\norm{g_k}^2] \lesssim 1$ for all $k$ (when
Assumption~\ref{assumption:resid-lipschitz} holds).  In particular, we have
\begin{equation*}
  \sum_{n = 1}^\infty \P\left(\badratioevent_{n,t}, ~ \norms{\wt{\error}_n
    - \error_n} > \epsilon\right) \lesssim
  \sum_{n = 1}^\infty n^{2 \npow - 2 \steppow} < \infty
\end{equation*}
whenever $\steppow > \npow + \half$, which we have already assumed. Thus, we
find that under the assumptions of the lemma, we have
$\indic{\badratioevent_{n,t}} \norms{\wt{\error}_n - \error_n} > \epsilon$
only finitely many times, for any $t \in \R$. By
Lemma~\ref{lemma:crazy-ratios}, with probability $1$ there is some $t <
\infty$ such that $\badratio_\infty \le t$, that is, as
$\badratioevent_{\infty, t} \subset \badratioevent_{n,t}$ it must be the
case that $\badratioevent_{n,t}$ always holds. So we find that
$\norms{\wt{\error}_n - \error_n} > \epsilon$ only finitely many times with
probability $1$. That is, $\norms{\wt{\error}_n - \error_n} \cas 0$, and the
continuity of $\lyap$ gives the almost sure
convergence~\eqref{eqn:resid-cas-error-true} as desired, as we
know that $\wt{\error}_n \cas 0$.

\subsection{Proof of Lemmas~\ref{lemma:corrected-errors-matrices}
  and~\ref{lemma:corrected-errors-matrices-lipschitz}}

\label{sec:proof-corrected-errors-matrices}

If we define $B_l^k = \prod_{i=l}^k (I - \stepsize_i
\hess)$, we have that $\wt{\error}_{n+1}' = B_1^n \error_1 - \sum_{k=1}^n
\stepsize_k B_{k+1}^n \noise^k$, and additionally we have
\begin{equation*}
  \sum_{k=1}^n \wt{\error}_k' = \sum_{k=1}^n B_1^{k-1} \error_1
  - \sum_{k = 1}^n \hess^{-1} \noise^k
  + \sum_{k=1}^n W_k^n \noise^k,
\end{equation*}
where the matrix $W_k^n$ is defined by $W_k^n = \stepsize_k \sum_{l =
  k+1}^{n-1} B_{k+1}^l - \hess^{-1}$. This matrix is well-structured,
as the following lemma shows.
\begin{lemma}[Polyak and Juditsky~\cite{PolyakJu92}, Lemma 1]
  \label{lemma:polyak-matrix-lemma}
  Let $\steppow \in (0, 1)$. Then
  \begin{equation*}
    \sup_{k, n} \norm{W_k^n} < \infty
    ~~~ \mbox{and} ~~~
    \lim_n \frac{1}{n} \sum_{k = 1}^n \norm{W_k^n} = 0.
  \end{equation*}
\end{lemma}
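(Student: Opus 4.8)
The plan is to prove the two assertions separately, in both cases reducing to scalar exponential-sum estimates of the type supplied by Lemmas~\ref{lemma:funny-gamma-integral} and~\ref{lemma:funny-zero-sum}. Everything rests on a uniform contraction bound for the matrix products $B_{k+1}^l = \prod_{i=k+1}^l (I - \stepsize_i \hess)$. Writing $\mu > 0$ for the smallest eigenvalue of the symmetric part of $\hess$ and $L = \norm{\hess}$, one has $\norm{(I - \stepsize \hess)v}^2 \le (1 - 2\mu\stepsize + L^2 \stepsize^2)\norm{v}^2$, so since $\stepsize_i \to 0$ there is an index $i_0$ with $\norm{I - \stepsize_i \hess} \le \exp(-\tfrac{\mu}{2}\stepsize_i)$ for all $i \ge i_0$; absorbing the finitely many earlier factors into a constant $C_0$ and using $\sum_{i = k+1}^l \stepsize_i \ge \frac{\stepsize}{1 - \steppow}\big((l+1)^{1 - \steppow} - (k+1)^{1 - \steppow}\big)$ gives $\norm{B_{k+1}^l} \le C_0 \exp\!\big(-c(l^{1 - \steppow} - k^{1 - \steppow})\big)$ for some $c > 0$ and all $1 \le k \le l$. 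Comparing the sum over $l$ with the corresponding integral and invoking Lemma~\ref{lemma:funny-gamma-integral} with $\kappa = 1 - \steppow$ yields the estimate I will use repeatedly: $\stepsize_k \sum_{l = k+1}^\infty \norm{B_{k+1}^l} \lesssim \stepsize_k(1 + k^\steppow) \lesssim 1$, uniformly in $k$. In particular $\norm{W_k^n} \le \stepsize_k \sum_{l = k+1}^{n-1} \norm{B_{k+1}^l} + \norm{\hess^{-1}} \lesssim 1 + \norm{\hess^{-1}} < \infty$ uniformly in $k$ and $n$, which is the first claim.

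For the Cesàro statement I would use the telescoping identity $B_{k+1}^{l-1} - B_{k+1}^l = \stepsize_l B_{k+1}^{l-1}\hess$, which on summing $l$ from $k+1$ to $n-1$ gives $\sum_{l = k+1}^{n-1} \stepsize_l B_{k+1}^{l-1} = (I - B_{k+1}^{n-1})\hess^{-1}$. Subtracting this from $\stepsize_k \sum_{l=k+1}^{n-1} B_{k+1}^l$ and rewriting each summand via $\stepsize_k B_{k+1}^l - \stepsize_l B_{k+1}^{l-1} = -\stepsize_k \stepsize_l B_{k+1}^{l-1}\hess + (\stepsize_k - \stepsize_l) B_{k+1}^{l-1}$ produces the decomposition
\[
  W_k^n = -\stepsize_k \Big(\sum_{l=k+1}^{n-1} \stepsize_l B_{k+1}^{l-1}\Big)\hess
  + \sum_{l=k+1}^{n-1} (\stepsize_k - \stepsize_l) B_{k+1}^{l-1}
  - B_{k+1}^{n-1}\hess^{-1}.
\]

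Now I would bound the three pieces. The first has norm $\lesssim \stepsize_k^2 \sum_l \norm{B_{k+1}^{l-1}} \lesssim \stepsize_k \lesssim k^{-\steppow}$ by the displayed estimate above (using $\stepsize_l \le \stepsize_k$ for $l \ge k$). For the second, $0 \le \stepsize_k - \stepsize_l \le \steppow \stepsize_k (l-k)/k$ for $l \ge k$, while a weighted form of the same integral estimate (splitting the sum at $l - k \asymp k$) gives $\sum_{l > k}(l-k)\norm{B_{k+1}^{l-1}} \lesssim k^\steppow / \stepsize_k$, so this piece has norm $\lesssim k^{\steppow - 1}$. The third has norm $\le C_0 \norm{\hess^{-1}} \exp(-c(n^{1-\steppow} - k^{1-\steppow}))$. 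Altogether $\norm{W_k^n} \lesssim k^{-\steppow} + k^{\steppow - 1} + \exp(-c(n^{1-\steppow} - k^{1-\steppow}))$, uniformly over $1 \le k \le n$. Dividing by $n$ and summing, the first two contributions are $\asymp n^{-\steppow}$ and $\asymp n^{\steppow - 1}$, both $\to 0$ since $0 < \steppow < 1$, and $\tfrac1n \sum_{k=1}^n \exp(-c(n^{1-\steppow} - k^{1-\steppow})) \lesssim n^{\steppow - 1} \to 0$ by the substitution $u = n^{1-\steppow} - k^{1-\steppow}$, which is precisely the estimate underlying Lemma~\ref{lemma:funny-zero-sum}. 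Hence $\tfrac1n \sum_{k=1}^n \norm{W_k^n} \to 0$.

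The main obstacle is the last piece, $B_{k+1}^{n-1}\hess^{-1}$: unlike the other two it does \emph{not} decay as $k \to \infty$, and for indices $k$ within $O(n^\steppow)$ of $n$ the exponent $n^{1-\steppow} - k^{1-\steppow}$ stays bounded, so $W_k^n$ genuinely remains of order $\norm{\hess^{-1}}$ near the diagonal $k \approx n$. What saves the Cesàro average is that these indices form a vanishing fraction of $\{1, \dots, n\}$ because $\steppow < 1$ — this is the only place the constraint $\steppow < 1$ is essential — and quantifying it is exactly what the $u$-substitution estimate (Lemma~\ref{lemma:funny-zero-sum}) accomplishes. The remaining bookkeeping — the finitely many early factors $I - \stepsize_i \hess$ that need not be contractions, and the possible non-symmetry of $\hess$ (only its symmetric part being positive definite) — is harmless once the contraction bound $\norm{B_{k+1}^l} \le C_0 \exp(-c(l^{1-\steppow} - k^{1-\steppow}))$ has been established.
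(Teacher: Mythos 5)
The paper does not actually prove this lemma: it is imported verbatim from Polyak and Juditsky (their Lemma 1), so there is no in-paper argument to compare against. Your self-contained proof is correct and is essentially a reconstruction of the Polyak--Juditsky argument. The two load-bearing steps both check out: (i) the contraction bound $\norm{B_{k+1}^l} \le C_0 \exp(-c(l^{1-\steppow} - k^{1-\steppow}))$, obtained exactly as in the paper's own Lemma~\ref{lemma:b-scalars-small} (including the correct handling of a non-symmetric $\hess$ via its symmetric part and of the finitely many early non-contractive factors), which via the gamma-integral estimate gives $\stepsize_k \sum_{l>k} \norm{B_{k+1}^l} \lesssim \stepsize_k(1 + k^{\steppow}) \lesssim 1$ and hence the uniform bound on $\norm{W_k^n}$; and (ii) the Abel-summation identity $\sum_{l=k+1}^{n-1} \stepsize_l B_{k+1}^{l-1} = (I - B_{k+1}^{n-1})\hess^{-1}$, which isolates $W_k^n$ into a piece of size $O(k^{-\steppow})$, a piece of size $O(k^{\steppow-1})$ (your bound $\stepsize_k - \stepsize_l \le \steppow \stepsize_k (l-k)/k$ together with $\sum_{l>k}(l-k)\norm{B_{k+1}^{l-1}} \lesssim k^{2\steppow}$ is right, with the split at $l - k \asymp k$ needed to control the curvature of $t \mapsto t^{1-\steppow}$), and the boundary term $B_{k+1}^{n-1}\hess^{-1}$. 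Your diagnosis of where $\steppow < 1$ enters is also the right one: the boundary term does not vanish for $k$ within $O(n^{\steppow})$ of $n$, but that window is an $o(1)$ fraction of the indices, which is precisely the content of the estimate underlying Lemma~\ref{lemma:funny-zero-sum}. The only remark worth making is that all of this is routine once the decomposition is written down, which is presumably why the authors chose to cite rather than reprove it.
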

\noindent
Thus---as we show rigorously shortly---the behavior
of $\sum_{k=1}^n \wt{\error}_k'$ is governed almost completely
by $\sum_{k=1}^n \hess^{-1} \noise^k$.

Now, by the
iteration~\eqref{eqn:corrected-iterations}, we have that
\begin{equation*}
  \wt{\error}_{k + 1}
  = \wt{\error}_k - \stepsize_k \left(\resid(x_k) + \noise^k
  \right)
  = (I - \stepsize_k \hess) \wt{\error}_k
  + \stepsize_k\underbrace{(H \wt{\error}_k - \resid(x_k))}_{\eqdef Z_k}
  - \stepsize_k \noise^k,
\end{equation*}
so that by analogy with $\wt{\error}_k'$ we have
\begin{equation*}
  \sum_{k=1}^n \wt{\error}_k' = \sum_{k=1}^n B_1^{k-1} \error_1
  - \sum_{k=1}^n \hess^{-1} \noise^k
  + \sum_{k=1}^n W_k^n \noise^k
  - \sum_{k=1}^n \hess^{-1} Z_k
  + \sum_{k=1}^n W_k^n Z_k.
\end{equation*}
Using the iteration~\eqref{eqn:corrected-iterations} for $\wt{\error}_k'$,
we thus have
\begin{equation}
  \sum_{k=1}^n \left( \wt{\error}_k - \wt{\error}_k' \right)
  = \sum_{k = 1}^n (\hess^{-1} - W_k^n) Z_k
  = \sum_{i = 1}^n (\hess^{-1} - W_k^n)
  (\hess \wt{\error}_k - \resid(x_k)).
  \label{eqn:residual-matrix-sum}
\end{equation}
Thus, to show that $\sqrt{n}(\wb{\wt{\error}}_n - \wb{\wt{\error}}_n')
\cp 0$, it suffices to show that the rightmost sum in
expression~\eqref{eqn:residual-matrix-sum}
is $o_P(\sqrt{n})$.

By Lemma~\ref{lemma:polyak-matrix-lemma}, we know that
$\sup_{k,n} \norm{\hess^{-1} - W_k^n} < \infty$, and thus
\begin{align}
  \label{eqn:hojicha}
  \normbigg{\sum_{k=1}^n (\wt{\error}_k - \error_k')}
  \lesssim \sum_{i=1}^n \norm{\hess \wt{\error}_k - \resid(x_k)}
  \le \sum_{k = 1}^n \left(\norm{\hess \wt{\error}_k - \resid(\wt{x}_k)}
  + \norm{\resid(\wt{x}_k) - \resid(x_k)}\right).
\end{align}
We consider each of the right-hand terms in inequality~\eqref{eqn:hojicha}
in turn, beginning with the second.
In this case, Lemma~\ref{lemma:wrong-is-right} implies that under
the conditions of either Lemma~\ref{lemma:corrected-errors-matrices}
or~\ref{lemma:corrected-errors-matrices-lipschitz}, we have
\begin{equation}
  \label{eqn:residuals-together}
  \frac{1}{\sqrt{n}} \sum_{k=1}^n \norm{\resid(\wt{x}_k) - \resid(x_k)}
  \le \frac{L}{\sqrt{n}} \sum_{k=1}^n \norms{\wt{x}_k - x_k}
  = \frac{L}{\sqrt{n}} \sum_{k=1}^n \norms{\wt{\error}_k - \error_k}
  \cp 0.
\end{equation}

We now turn to the error part $H\wt{\error}_k - \resid(\wt{x}_k)$ of
inequality~\eqref{eqn:hojicha}.  To that end, let $\epsilon > 0$ be
the value in Assumption~\ref{assumption:residual-quadratic} such that
$\norm{H(x - x\opt) - \resid(x)} \le C \norm{x - x\opt}^{1 + \gamma}$ for
$x$ such that $\norm{x - x\opt} \le \epsilon$. Splitting the sum into
two parts, Assumption~\ref{assumption:residual-quadratic} thus implies
\begin{align*}
  \sum_{k = 1}^n \norm{\hess \wt{\error}_k - \resid(\wt{x}_k)}
  & \le \sum_{k = 1}^n \norm{\hess \wt{\error}_k - \resid(\wt{x}_k)}
  \indic{\norms{\wt{\error}_k} > \epsilon}
  + C \sum_{k = 1}^n \norms{\wt{\error}_k}^{1 + \gamma}.
\end{align*}
As we know that $\wt{\error}_k \cas 0$, with probability $1$ over
$\{\noise^1, \noise^2, \cdots\} \cup \{\errmat^{ij}\}_{i \ge j}$ (recall the
convergence guarantee~\eqref{eqn:resid-cas-error} after
Lemma~\ref{lemma:p1-convergence}) there exists some (random)
$N(\epsilon) < \infty$ such that $\norms{\wt{\error}_k} \le \epsilon$ for
all $k \ge N(\epsilon)$, so that
\begin{equation*}
  \lim_{n \to \infty}
  \sum_{k = 1}^n \norm{\hess \wt{\error}_k - \resid(\wt{x}_k)}
  \indic{\norms{\wt{\error}_k} > \epsilon} < \infty
  ~~ \mbox{w.p.}~1,
\end{equation*}
and
\begin{equation*}
  n^{-\half}
  \sum_{k = 1}^n \norm{\hess \wt{\error}_k - \resid(\wt{x}_k)}
  \indic{\norms{\wt{\error}_k} > \epsilon} \cas 0.
\end{equation*}
It thus remains to argue that $n^{-\half} \sum_{k=1}^n
\norms{\wt{\error}_k}^{1 + \gamma} \to 0$ in probability (or otherwise).

Let $\epsilon > 0$ be such that $\<\nabla\lyap (x - x\opt), \resid(x)\>
\ge \lambda_0 \lyap(x - x\opt)$ for all $x$ such that $\norm{x - x\opt}
\le \epsilon$ (such an $\epsilon$ certainly exists under both
Assumptions~\ref{assumption:resid-strong-convex}
and~\ref{assumption:resid-lipschitz}). Define the events
\begin{equation*}
  \event_a^k = \left\{\norms{\wt{\error}_i} \le \epsilon,
  ~ \mbox{all~} i \in \{\ceil{a}, \ldots, k \}\right\}.
\end{equation*}
Dividing the sum into two parts, we have
\begin{align*}
  \sum_{k = 1}^n \norms{\wt{\error}_k}^{1 + \gamma}
  & \le \sum_{k = 1}^n \norms{\wt{\error}_k}^{1 + \gamma}
  \indic{\event_{k/2}^{k-1}}
  + \sum_{k = 1}^n \norms{\wt{\error}_k}^{1 + \gamma}
  \left(1 - \indic{\event_{k/2}^{k-1}}\right).
\end{align*}
By the fact that $\wt{\error}_k \cas 0$, we know that there exists some
(random but finite) $N(\epsilon)$ such that $\norms{\wt{\error}_k} <
\epsilon$ for all $k > N(\epsilon)$; the second term in the preceding
display is thus finite with probability one and
\begin{equation*}
  n^{-\half} \sum_{k = 1}^n \norms{\wt{\error}_k}^{1 + \gamma}
  \left(1 - \indic{\event_{k/2}^{k-1}}\right)
  \cas 0.
\end{equation*}
By combining expressions~\eqref{eqn:residual-matrix-sum},
\eqref{eqn:hojicha}, and \eqref{eqn:residuals-together} with the above
display, we see that to prove Lemma~\ref{lemma:corrected-errors-matrices}
or~\ref{lemma:corrected-errors-matrices-lipschitz}, all that remains to show
is that
\begin{equation}
  \label{eqn:to-show-corrected-matrix-convergence}
  n^{-\half} \sum_{k=1}^n \norms{\wt{\error}_k}^{1 + \gamma}
  \indic{\event_{k/2}^{k-1}} \cp 0.
\end{equation}

\subsubsection{Proof of Lemma~\ref{lemma:corrected-errors-matrices}}
\label{sec:proof-corrected-errors-matrices-a}

We give a single-step bound on $\lyap(\wt{\error}_k)$ that we can use to
give the desired convergence guarantee under the conditions of
Lemma~\ref{lemma:corrected-errors-matrices}.
Recall the definitions~\eqref{eqn:ratio-def} and~\eqref{eqn:ratio-event-def}
of $\badratio_n$ and the associated event $\badratioevent_{n,t} =
\{\badratio_n \le t\}$, and recall also our
assumption~\eqref{eqn:delay-moment-interval} that $\npow \in
(\frac{1}{\delmoment-1}, \steppow - \frac{1}{1 + \gamma}) \subset
(\frac{1}{\delmoment - 1}, \steppow - \half)$, where $\npow$ is the power
used in the definition of $\badratio_n$. We claim that there exist constants
$c > 0$ and $C < \infty$, independent of $C_{\error,t}^2 = \sup_k
\E[\indic{\badratioevent_{k,t}} \norm{\error_k}^2]$, such that for $l \in
\R$ with $l < k$, we have
\begin{equation}
  \begin{split}\E\left[\lyap(\wt{\error}_{k+1}) \indic{\event_{l}^k,
        \badratioevent_{k+1,t}}\right]
    & \le \E\left[(1 - c \stepsize_k + C \stepsize_k^2 k^\npow)
      \lyap(\wt{\error}_k)
      \indic{\event_{l}^{k-1}, \badratioevent_{k,t}}\right] \\
    & \qquad ~ + C t^2 \stepsize_k^2 k^{\npow} (C_{\error,t}^2 + 1).
  \end{split}
  \label{eqn:down-with-the-lyapunovs-a}
\end{equation}
We temporarily defer proof of this claim and show how to use it to
show $\sum_{k=1}^n \norms{\wt{\error}_k}^{1 + \gamma} =
o_P(\sqrt{n})$.

Let $K < \infty$ be large enough that for the constants $c, C$ in
inequality~\eqref{eqn:down-with-the-lyapunovs-a},
there is a constant $c'$ such that for $k \ge K$ and stepsizes
$\stepsize_k = \stepsize k^{-\steppow}$, we have
\begin{equation*}
  (1 - c \stepsize_i + C \stepsize_i^2 i^\npow)
  \le \exp(-c' \stepsize_i)
\end{equation*}
for $i \ge k/2$. This must be possible as we have assumed $\npow < \steppow$.
Then by recursively applying
inequality~\eqref{eqn:down-with-the-lyapunovs-a}, we have for $k \ge K$,
\begin{align}
  \lefteqn{\E\left[\lyap(\wt{\error}_k)
      \indic{\event_{k/2}^{k-1}, \badratioevent_{k,t}}\right]} \nonumber \\
  & \le \exp\left(-c \sum_{i = \ceil{k/2}}^{k-1} \stepsize_i \right)
  \E[\lyap(\wt{\error}_{\ceil{k/2}}) \indic{\badratioevent_{\ceil{k/2},t}}]
  + C (C_{\error,t}^2 + 1) t^2 \sum_{i = \ceil{k/2}}^{k-1}
  \stepsize_i^2 i^{\npow} \exp\left(-c\sum_{j = i + 1}^{k-1}
  \stepsize_j\right) \nonumber \\
  & \le
  \exp\left(-c' k^{1 - \steppow} \right)
  \E[\lyap(\wt{\error}_{\ceil{k/2}}) \indic{\badratioevent_{\ceil{k/2},t}}]
  + C \sum_{i = \ceil{k/2}}^{k-1}
  \stepsize_i^2 i^{\npow} \exp\left(-c' \sum_{j = i + 1}^{k-1}
  \stepsize_j\right),
  \label{eqn:lyapunovs-down-a}
\end{align}
where the second inequality follows because $\sum_{i=l}^k \stepsize_i
\asymp k^{1 - \steppow} - l^{1 - \steppow}$, and $k^{1 - \steppow} -
(k/2)^{1 - \steppow} \ge \frac{1 - \steppow}{2} k^{1 - \steppow}$.

Now, by using the assumption that $\E[\indic{\badratioevent_{k,t}}
  \lyap(\wt{\error}_k)] \lesssim
\E[\indic{\badratioevent_{k,t}} \norms{\wt{\error}_k}^2] \le
C_{\error,t}^2$ for all $k$, we find that
\begin{align*}
  \sum_{k=1}^n & \E\left[\norms{\wt{\error}_k}^{1 + \gamma}
      \indic{\badratioevent_{k,t}, \event_{k/2}^{k-1}}\right]
  \le K C_{\error,t}^{1 + \gamma}
  + \sum_{k \ge K}^n \E\left[\norms{\wt{\error}_k}^2
    \indic{\event_{k/2}^{k-1}, \badratioevent_{k,t}}
    \right]^{\frac{1 + \gamma}{2}} \\
  & \stackrel{(i)}{\le} K C_{\error,t}^{1 + \gamma}
  + \sum_{k \ge K}^n \left(
  \exp\left(-c k^{1 - \steppow}\right)
  C_{\error,t}^2 + C \sum_{i = k/2}^k \stepsize_i^2 i^{\npow}
  \exp\left(-c (k^{1 - \steppow} - i^{1 - \steppow})\right)
  \right)^{\frac{1 + \gamma}{2}} \\
  & \le K C_\error^{1 + \gamma}
  + \sum_{k \ge K}
  \exp\left(-\frac{c(1 + \gamma)}{2} k^{1 - \steppow}\right)
  + C \sum_{k \ge K}
  \left(k^{\npow - 2 \steppow}
  \sum_{i = k/2}^k \exp\left(-c (k^{1 - \steppow}
  - i^{1 - \steppow})\right)\right)^{\frac{1 + \gamma}{2}},
\end{align*}
where step~(i) follows from inequality~\eqref{eqn:lyapunovs-down-a}.
By Lemma~\ref{lemma:funny-gamma-integral}, we know that
\begin{equation*}
  \sum_{i = k/2}^k \exp\left(-c(k^{1 - \steppow} - i^{1 - \steppow})\right)
  \lesssim \Gamma\left(\frac{1}{1 - \steppow}\right)
  + k^\steppow,
\end{equation*}
so that
\begin{equation*}
  \sum_{k=1}^n \E\left[\norms{\wt{\error}_k}^{1 + \gamma}
    \indic{\event_{k/2}^{k-1}, \badratioevent_{k,t}}\right]
  \le K C_\error^{1 + \gamma}
  + C \sum_{k = K}^n
  \exp\left(-c k^{1 - \steppow}\right)
  + C \sum_{k = K}^n
  k^{(\npow -\steppow) \frac{1 + \gamma}{2}}.
\end{equation*}
Noting that
$\sum_{k=1}^n k^{(\npow - \steppow) \frac{1 + \gamma}{2}}
\asymp n^{1 - \frac{(\steppow - \npow)(1 + \gamma)}{2}}$,
we have
\begin{equation}
  \label{eqn:l1-guarantee-loads-indics}
  n^{-\half} \sum_{k=1}^n \norms{\wt{\error}_k}^{1 + \gamma}
  \indic{\event_{k/2}^{k-1}, \badratioevent_{k,t}}
  \clp{1} 0
  ~~~ \mbox{if} ~~~
  (\steppow - \npow) \frac{1 + \gamma}{2} > \half,
  ~~ \mbox{or} ~~
  \steppow - \npow > \frac{1}{1 + \gamma}.
\end{equation}
Our initial choice of $\npow$ satisfied this inequality, so we have that
for any $t \in \R$ the preceding convergence guarantee holds.

Now, let $\delta > 0$ be arbitrary and using Lemma~\ref{lemma:crazy-ratios},
choose $t$ large enough that $\P(\badratioevent_{\infty, t}) \le \delta$.
Then
\begin{align*}
  \P\left(n^{-\half} \sum_{k=1}^n \norms{\wt{\error}_k}^{1 + \gamma}
  \indic{\event_{k/2}^{k-1}} > \epsilon\right)
  & \le \P\left(\badratioevent_{\infty,t}, ~ 
  n^{-\half} \sum_{k=1}^n \norms{\wt{\error}_k}^{1 + \gamma}
  \indic{\event_{k/2}^{k-1}} > \epsilon\right)
  + \P(\badratioevent_{\infty,t}^c) \\
  & \le \underbrace{\P\left(
    n^{-\half} \sum_{k=1}^n \norms{\wt{\error}_k}^{1 + \gamma}
    \indic{\event_{k/2}^{k-1}}\indic{\badratioevent_{k,t}}
    > \epsilon\right)}_{
    \to 0 ~ \mbox{as}~ n \to \infty}
  + \delta,
\end{align*}
the convergence to zero a consequence of
inequality~\eqref{eqn:l1-guarantee-loads-indics}.  As $\delta > 0$ was
arbitrary, we see that $n^{-\half} \sum_{k=1}^n \norms{\wt{\error}_k}^{1 +
  \gamma} \indic{\event_{k/2}^{k-1}} \cp 0$,
and expression~\eqref{eqn:to-show-corrected-matrix-convergence}
gives the lemma.

\paragraph{Proof of inequality~\eqref{eqn:down-with-the-lyapunovs-a}.}

Recall the definition of the event $\event_l^k = \{\norms{\wt{\error}_i} \le
\epsilon, ~ \mbox{all}~ i = \ceil{l}, \ldots, k \}$ and
that $\indic{\badratioevent_{k+1,t}} \le \indic{\badratioevent_{k,t}}$. By
inequality~\eqref{eqn:single-step-alternate-lyap-v2} and the fact
that $\event_i^k \in \mc{F}_{k-1}$ for any $i \le k$
and $\badratioevent_{k,t} \in \mc{F}_{k-1}$, we have
\begin{align*}
  \lefteqn{
    \E\left[\lyap(\wt{\error}_{k+1}) \indic{\event_{l}^{k},
      \badratioevent_{k+1,t}}\right]
    \le \E\left[\E\left[\lyap(\wt{\error}_{k+1})
        \mid \mc{F}_{k-1}\right]
    \indic{\event_{l}^k, \badratioevent_{k,t}}\right]} \nonumber \\
  & \le 
  \E\left[\left(\lyap(\wt{\error}_{k})
    - \stepsize_{k} \<\nabla \lyap(\wt{\error}_{k}),
    \resid(\wt{x}_{k})\>
    + C \stepsize_{k}^2 k^\npow \lyap(\wt{\error}_k)
    \right) \indic{\event_{l}^{k}, \badratioevent_{k,t}}\right] \nonumber \\
  & \qquad ~ + C \E\left[
    \stepsize_k^2 \badratio_k^2 \sum_{i = k - k^\npow}^{k - 1}
    \norms{\resid(x_i) + \noise_i}^2
    \indic{\event_{l}^{k-1}, \badratioevent_{k,t}}
    + \stepsize_k^2 (\norm{\error_k}^2 + 1)\indic{\badratioevent_{k,t},
      \event_l^{k-1}} \right],
\end{align*}
where we have used Assumption~\ref{assumption:noise-fun}
that $\noise_k = \noise_k(0) + \noisier(x_k)$, and
$\E[\sup_k \E[\norm{\noise_k(0)}^2 \mid \mc{F}_{k-1}]] < \infty$.

We now use Lemma~\ref{lemma:errors-not-so-bad} to provide
control of the preceding inequality.  Under
Assumption~\ref{assumption:resid-strong-convex} coupled with $\sup_k
\E[\indic{\badratio_{k,t}} \norm{\error_k}^2] \le C_{\error,t}^2 < \infty$,
Lemma~\ref{lemma:errors-not-so-bad} implies that
\begin{equation*}
  \E\left[
    \stepsize_k^2 \badratio_k^2 \sum_{i = k - k^\npow}^{k - 1}
    \norms{\resid(x_i) + \noise_i}^2
    \indic{\event_{l}^{k-1}, \badratio_{k,t}}\right]
  \lesssim \stepsize_k^2 \, t^2 k^{\npow} (C_{\error,t}^2 + 1).
\end{equation*}
Noting that $\norms{\wt{\error}_k}^2 \le C \lyap(\wt{\error}_k)$ and
$\<\nabla \lyap(\wt{\error}_k), \resid(\wt{x}_k)\> \ge \lambda_0
\lyap(\wt{\error}_k)$ by Assumption~\ref{assumption:resid-strong-convex}, we
obtain inequality~\eqref{eqn:down-with-the-lyapunovs-a} as desired.

\subsubsection{Proof of Lemma~\ref{lemma:corrected-errors-matrices-lipschitz}}
\label{sec:proof-corrected-errors-matrices-b}

As in the proof of Lemma~\ref{lemma:corrected-errors-matrices}, we show that
expression~\eqref{eqn:to-show-corrected-matrix-convergence} holds.  As
before, we have $\npow \in (\frac{1}{\delmoment - 1}, \steppow - \frac{1}{1
  + \gamma})$ as the power used in the definition of $\badratio_n$.  We
begin by considering the progress made by a single step of the iteration
with alternate error terms. We first claim that, similar to
inequality~\eqref{eqn:down-with-the-lyapunovs-a}, that there exist constants
$c, C$ such that for any $l < k$ with $l \in \R$, we have
\begin{align}
  \label{eqn:down-with-the-lyapunovs-b}
  \E\left[\lyap(\wt{\error}_{k+1}) \indic{\event_l^k, \badratioevent_{k+1,t}}
  \right]
  & \le (1 - c \stepsize_k + C \stepsize_k^2 k^\npow)
  \E\left[\lyap(\wt{\error}_k)
    \indic{\event_l^{k-1}, \badratioevent_{k,t}}\right]
  + C \stepsize_k^2 t^2 k^\npow.
\end{align}
Indeed, as in the proof of inequality~\eqref{eqn:down-with-the-lyapunovs-a},
we use inequality~\eqref{eqn:single-step-alternate-lyap}
to obtain
\begin{align*}
  \lefteqn{
    \E\left[\lyap(\wt{\error}_{k+1}) \indic{\event_{l}^{k},
      \badratioevent_{k+1,t}}\right]
    \le \E\left[\E\left[\lyap(\wt{\error}_{k+1})
        \mid \mc{F}_{k-1}\right]
      \indic{\event_{l}^k, \badratioevent_{k,t}}\right]} \nonumber \\
  & \le 
  \E\left[\left(\lyap(\wt{\error}_{k})
    - \stepsize_{k} \<\nabla \lyap(\wt{\error}_{k}),
    \resid(\wt{x}_{k})\>
    + C \stepsize_{k}^2 k^\npow \lyap(\wt{\error}_k)
    \right) \indic{\event_{l}^{k}, \badratioevent_{k,t}}\right] \nonumber \\
  & \qquad ~ + C \E\left[
    \stepsize_k^2 \badratio_k^2 \sum_{i = k - k^\npow}^{k - 1}
    \norms{\resid(x_i) + \noise_i}^2
    \indic{\event_{l}^{k-1}, \badratioevent_{k,t}}
    + \stepsize_k^2 (\norm{\resid(x_k)}^2 + 1)\indic{\badratioevent_{k,t},
      \event_l^{k-1}} \right],
\end{align*}
where we have used Assumption~\ref{assumption:resid-lipschitz} that
$\E[\norm{\noise_k}^2 \mid \mc{F}_{k-1}] \lesssim 1$ for all $k$.  Using our
assumption that on the event $\event_l^k$ we have $\<\nabla
\lyap(\wt{\error}_k, \resid(\wt{x}_k)\> \ge \lambda_0 \lyap(\wt{\error}_k)$
and that $\norm{\resid(x)} \lesssim 1$ for all $x$
(Assumption~\ref{assumption:resid-lipschitz}), we obtain
the desired inequality~\eqref{eqn:down-with-the-lyapunovs-b}.

Again paralleling the proof of Lemma~\ref{lemma:corrected-errors-matrices},
let $K < \infty$ be large enough that for the constants $c, C$ in
inequality~\eqref{eqn:down-with-the-lyapunovs-b}, there is a constant $c'$
such that for $k \ge K$ and stepsizes $\stepsize_k = \stepsize
k^{-\steppow}$, we have
$(1 - c \stepsize_i + C \stepsize_i^2 i^\npow)
\le \exp(-c' \stepsize_i)$
for $i \ge k/2$.  Then by recursively applying
inequality~\eqref{eqn:down-with-the-lyapunovs-b}, we have for $k \ge K$,
\begin{align}
  \lefteqn{
    \E\left[\lyap(\wt{\error}_k)
      \indic{\event_{k/2}^{k-1}, \badratioevent_{k,t}}
    \right]} \nonumber \\
  & \le
  \exp\left(-c' k^{1 - \steppow} \right)
  \E\left[\lyap(\wt{\error}_{\ceil{k/2}})\indic{\badratioevent_{\ceil{k/2},t}}
    \right]
  + C \sum_{i = \ceil{k/2}}^{k-1}
  \stepsize_i^2 i^\npow \exp\left(-c' \sum_{j = i + 1}^{k-1}
  \stepsize_j\right),
  \label{eqn:lyapunovs-down-b}
\end{align}
exactly as in the derivation of inequality~\eqref{eqn:lyapunovs-down-a}.

The remainder of the proof is completely identical to that
of Lemma~\ref{lemma:corrected-errors-matrices}.

\subsection{Proof of Lemma~\ref{lemma:smaller-than-past}}
\label{sec:proof-smaller-than-past}

Fix $n \in \N$ and recall our assumption~\eqref{eqn:delay-moment-interval}
that $\npow \in (\frac{1}{\delmoment - 1}, \steppow - \half)$, where $\npow$
is used in the definition of the ratio $\badratio_n$ and event
$\badratioevent_{n,t}$ (Defs.~\eqref{eqn:ratio-def}
and~\eqref{eqn:ratio-event-def}), and consider $\E[\lyap(\wt{\error}_{n+1})
  \indic{\badratioevent_{n+1,t}}]$. Combining
inequality~\eqref{eqn:single-step-alternate-lyap-v2}, the fact that
$\indic{\badratioevent_{n,t}}$ is non-increasing (because
$\badratioevent_{n+1,t} \subset \badratioevent_{n,t}$), and
$\badratioevent_{n,t} \in \mc{F}_{n-1}$, we see that defining
\begin{equation*}
  C_{\error,n,t}^2 =
  \max_{k \le n} \E[\norm{\error_k}^2 \indic{\badratioevent_{k,t}}],
\end{equation*}
we have
\begin{align*}
  \lefteqn{
    \E\left[\lyap(\wt{\error}_{n+1}) \indic{\badratioevent_{n+1,t}}\right]
    \le \E\left[\lyap(\wt{\error}_{n+1}) \indic{\badratioevent_{n,t}}\right]
  } \\
  & \le \E\left[
    \left(\lyap(\wt{\error}_n) - \stepsize_n\<\nabla\lyap(\wt{\error}_n),
    \resid(\wt{x}_n)\> + C \stepsize_n^2 n^\npow
    \norms{\nabla \lyap(\wt{\error}_n)}^2
    \right) \indic{\badratioevent_{n,t}}
    \right]
  + C t^2 \stepsize_n^2 n^\npow (C_{\error,n,t}^2 + 1).
\end{align*}
For the final inequality we have used
inequality~\eqref{eqn:gradients-undelayed-small}, that is, $\E[\norm{g_k}^2
  \indic{\badratioevent_{k,t}}] \le c (C_{\error,n,t}^2 + 1)$.  By using
that $\norm{\nabla \lyap(x - x\opt)}^2 \le L^2 \norm{x - x\opt}^2 \le
\frac{L^2}{\lambda^2} \lyap(x - x\opt)$ by our assumptions on $\lyap$, we
thus obtain
\begin{align*}
  \lefteqn{
    \E\left[\lyap(\wt{\error}_{n+1}) \indic{\badratioevent_{n+1,t}}\right]} \\
  & \le 
  \E\left[\left(
    \lyap(\wt{\error}_n) - \stepsize_n\<\nabla\lyap(\wt{\error}_n),
    \resid(\wt{x}_n)\> + C \stepsize_n^2 n^\npow \lyap(\wt{\error}_n)\right)
    \indic{\badratioevent_{n,t}}
    \right]
  + C t^2 \stepsize_n^2 n^\npow (C_{\error,n,t}^2 + 1).
\end{align*}
Now, noting that $\langle\nabla
\lyap(\wt{\error}_n), \resid(\wt{x}_n)\rangle \ge \lambda_0
\lyap(\wt{\error}_n)$ by Assumption~\ref{assumption:resid-strong-convex},
we have
\begin{align*}
  \E\left[\lyap(\wt{\error}_{n+1}) \indic{\badratioevent_{n+1,t}} \right]
  & \le
  (1 - \lambda_0 \stepsize_n + C \stepsize_n^2 n^\npow)
  \E\left[\lyap(\wt{\error}_n)\indic{\badratioevent_{n,t}} \right]
  + C t^2 \stepsize_n^2 n^\npow (C_{\error,n,t}^2 + 1)
\end{align*}
By recursively applying this inequality,
we have
\begin{equation}
  \begin{split}
    \lefteqn{
      \E\left[\lyap(\wt{\error}_{n+1}) \indic{\badratioevent_{n+1,t}}
        \right]} \\
    & \le \prod_{k = 1}^n (1 - \lambda_0 \stepsize_k + C \stepsize_k^2 k^\npow)
    \E[\lyap(\error_1)]
    + C t^2 (C_{\error,n,t}^2 + 1) \sum_{k = 1}^n \stepsize_k^2 k^\npow
    \prod_{l=k+1}^n
    (1 - \lambda_0 \stepsize_l + C \stepsize_l^2 l^\npow).
  \end{split}
  \label{eqn:strong-recurse-lyap}
\end{equation}
We state a technical lemma the controls the products above. Let
\begin{equation*}
  b_l^k
  \defeq \prod_{i = l}^k (1 - \lambda_0 \stepsize_i + C \stepsize_i^2 i^\npow).
\end{equation*}
\begin{lemma}
  \label{lemma:b-scalars-small}
  Let the scalar sequence $b_l^k$ be defined as above
  with $l \le k$, $\steppow > \npow$,
  and $\stepsize_i =
  \stepsize i^{-\steppow}$ where $C \ge 4\lambda_0^2$ (so that each term in the
  product is non-negative). There exist constants $c_0, c_1, c_2$ (dependent
  on $\steppow$, $\stepsize$, $\lambda_0$, and $C$) such that
  \begin{equation}
    b_l^k \le c_0 \exp\left(-c_1 \sum_{i = l}^k \stepsize_i
    \right)
    \le c_0 \exp\left(-c_2 (k^{1 - \steppow} - l^{1 - \steppow})\right).
    \label{eqn:b-scalars-small}
  \end{equation}
\end{lemma}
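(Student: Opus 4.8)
The plan is to split the product defining $b_l^k$ into a \emph{head} of boundedly many initial factors, which gets absorbed into the constant $c_0$, and a \emph{tail} in which every factor is dominated by $\exp(-\tfrac{\lambda_0}{2}\stepsize_i)$, and then to convert the resulting sum $\sum_i \stepsize_i$ into an integral of $t^{-\steppow}$.

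First I would fix a threshold $K = K(\steppow,\npow,\stepsize,\lambda_0,C) \in \N$ large enough that $C\stepsize_i^2 i^\npow \le \tfrac{\lambda_0}{2}\stepsize_i$ for all $i \ge K$; this is possible precisely because $\stepsize_i = \stepsize i^{-\steppow}$ gives $C\stepsize_i^2 i^\npow / \stepsize_i = C\stepsize\, i^{\npow-\steppow} \to 0$ as $i \to \infty$, using the hypothesis $\steppow > \npow$. For such $i$, using $1 + x \le e^x$,
\begin{equation*}
  0 \le 1 - \lambda_0 \stepsize_i + C \stepsize_i^2 i^\npow
  \le 1 - \frac{\lambda_0}{2}\stepsize_i
  \le \exp\!\left(-\frac{\lambda_0}{2}\stepsize_i\right),
\end{equation*}
the first inequality being the stated nonnegativity (from $C \ge 4\lambda_0^2$). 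Meanwhile $c_0' \defeq \prod_{i=1}^{K-1}\max\{1,\, 1 - \lambda_0\stepsize_i + C\stepsize_i^2 i^\npow\}$ and $S_0 \defeq \sum_{i=1}^{K-1}\stepsize_i$ are finite constants independent of $l$ and $k$.

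Next I would assemble the first inequality in~\eqref{eqn:b-scalars-small}. If $k < K$ then $b_l^k \le c_0'$ while $k^{1-\steppow} - l^{1-\steppow} \le K^{1-\steppow}$, so this case is absorbed by taking the leading constant large enough (depending on the $c_2$ fixed below). If $k \ge K$, set $m = \max\{l,K\}$ and write $b_l^k = \big(\prod_{i=l}^{m-1}(1 - \lambda_0\stepsize_i + C\stepsize_i^2 i^\npow)\big)\big(\prod_{i=m}^{k}(1 - \lambda_0\stepsize_i + C\stepsize_i^2 i^\npow)\big)$; the first product is at most $c_0'$ (and empty when $l \ge K$), and the second is at most $\exp(-\tfrac{\lambda_0}{2}\sum_{i=m}^{k}\stepsize_i)$ by the tail bound. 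Since $\sum_{i=l}^{m-1}\stepsize_i \le S_0$, we get $\exp(-\tfrac{\lambda_0}{2}\sum_{i=m}^{k}\stepsize_i) \le e^{\lambda_0 S_0/2}\exp(-\tfrac{\lambda_0}{2}\sum_{i=l}^{k}\stepsize_i)$, so $b_l^k \le c_0' e^{\lambda_0 S_0/2}\exp(-c_1\sum_{i=l}^{k}\stepsize_i)$ with $c_1 = \lambda_0/2$; after enlarging the leading constant to some $c_0$ to cover $k<K$, this is the first bound.

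Finally, for the second inequality I would compare the sum to an integral: since $t \mapsto t^{-\steppow}$ is decreasing and $\steppow \in (0,1)$,
\begin{equation*}
  \sum_{i=l}^{k}\stepsize_i = \stepsize\sum_{i=l}^{k} i^{-\steppow}
  \ge \stepsize\int_l^{k+1} t^{-\steppow}\,dt
  = \frac{\stepsize}{1-\steppow}\left((k+1)^{1-\steppow} - l^{1-\steppow}\right)
  \ge \frac{\stepsize}{1-\steppow}\left(k^{1-\steppow} - l^{1-\steppow}\right),
\end{equation*}
whence $\exp(-c_1\sum_{i=l}^{k}\stepsize_i) \le \exp(-c_2(k^{1-\steppow} - l^{1-\steppow}))$ with $c_2 = c_1\stepsize/(1-\steppow)$; chaining the three steps gives~\eqref{eqn:b-scalars-small}. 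There is no substantive obstacle here: the only points needing care are the existence of the threshold $K$---which is exactly where $\steppow > \npow$ enters---and the bookkeeping between the regimes $l < K$ and $l \ge K$, which only inflates the multiplicative constant and leaves the exponential rates $c_1,c_2$ intact.
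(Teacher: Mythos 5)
Your proof is correct and follows essentially the same route as the paper's: choose a threshold $K$ (using $\steppow > \npow$) beyond which each factor is at most $\exp(-\tfrac{\lambda_0}{2}\stepsize_i)$, absorb the finitely many head factors and the missing head of the exponent sum into the constant $c_0$, and then lower-bound $\sum_{i=l}^k \stepsize_i$ by the integral $\stepsize\int_l^k t^{-\steppow}\,dt$. The only difference is cosmetic bookkeeping (your explicit treatment of $k<K$ versus the paper's $\max\{1,\cdot\}$ products), so nothing further is needed.
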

\begin{proof}
  This result is similar to a result of Polyak and Juditsky~\cite[proof of
    Lemma 1, Part 3]{PolyakJu92}, but with some differences for additional
  powers in the sequence. As $\stepsize_i = \stepsize i^{-\steppow}$ and we
  have assumed that $\steppow > \npow$, there exists some $K \in \N$ such
  that for $k \ge K$ we have $2 C \stepsize_k^2 k^\npow \le \lambda_0
  \stepsize_k$, or $\stepsize k^{\npow - \steppow} \le \lambda_0 / (2C)$.
  For any $k \ge K$, we have $(1 - \lambda_0 \stepsize_k + C \stepsize_k^2)
  \le 1 - \lambda_0 \stepsize_k / 2 \le \exp(-\frac{\lambda_0}{2}
  \stepsize_k)$.  We find that
  \begin{align*}
    b_l^k
    & = \prod_{i = l}^k (1 - \lambda_0 \stepsize_i + C \stepsize_i^2 i^\npow)
    = \prod_{i \ge l \wedge K}^k
    (1 - \lambda_0 \stepsize_i + C \stepsize_i^2 i^\npow)
    \prod_{i = l}^{(l \wedge K) - 1}
    (1 - \lambda_0 \stepsize_i + C \stepsize_i^2 i^\npow) \\
    & \le \prod_{i = 1}^K
    \max\{1, 1 - \lambda_0 \stepsize_i + C \stepsize_i^2 i^\npow\}
    \prod_{i \ge l \wedge K}^k \exp\left(-\frac{\lambda_0}{2}
    \stepsize_i\right) \\
    & \le \bigg[\exp\bigg(\frac{\lambda_0}{2}\sum_{i=1}^K
      \stepsize_i\bigg) \prod_{i=1}^K
      \max\{1, 1 - \lambda_0 \stepsize_i + C \stepsize_i^2 i^\npow\}
      \bigg]
    \exp\left(-\frac{\lambda_0}{2} \sum_{i = l}^k \stepsize_i\right).
  \end{align*}
  The term in the braces $[\cdot]$ in the preceding product is the constant
  $c_0$, giving the first inequality of
  expression~\eqref{eqn:b-scalars-small}. For the second, note that
  \begin{align*}
    \sum_{i = l}^k \stepsize_i
    = \stepsize \sum_{i = l}^k i^{-\steppow}
    \ge \stepsize \int_l^k t^{-\steppow} dt
    = \frac{\stepsize}{1 - \steppow}\left[k^{1 - \steppow} - l^{1 - \steppow}
      \right],
  \end{align*}
  which completes the proof.
\end{proof}

Applying Lemma~\ref{lemma:b-scalars-small} in
inequality~\eqref{eqn:strong-recurse-lyap}, we obtain
for constants $c, C$ independent of $C_{\error,n,t}$ that
\begin{equation*}
  \E\left[\lyap(\wt{\error}_{n+1})\indic{\badratioevent_{n+1,t}} \right]
  \le C \exp(-c n^{1 - \steppow}) \E[\lyap(\error_1)]
  + C t^2 (C_{\error,n,t}^2 + 1)
  \sum_{k=1}^n \stepsize_k^2 k^\npow \exp\left(-c (n^{1 - \steppow}
  - k^{1 - \steppow})\right).
\end{equation*}
Now we use the technical Lemma~\ref{lemma:funny-zero-sum}, which shows that
the final sum tends to zero when $\stepsize_k = \stepsize k^{-\steppow}$. In
particular, for any $\epsilon > 0$, there exists some $N(\epsilon,t) <
\infty$, independent of $C_{\error,n,t}$, such that $n \ge N(\epsilon,t)$
implies
\begin{equation*}
  C \exp(-c n^{1 - \steppow}) < \epsilon^2
  ~~~ \mbox{and} ~~~
  C t^2 \sum_{k=1}^n \stepsize^2 k^{\npow - 2\steppow}
  \exp\left(-c (n^{1 - \steppow} - k^{1 - \steppow})\right)
  \le \epsilon^2.
\end{equation*}
That is, we have
\begin{equation*}
  \E\left[\lyap(\wt{\error}_{n+1}) \indic{\badratioevent_{n+1,t}}\right]
  \le \epsilon^2 \left(\E[\lyap(\error_1)]
  + C_{\error,n,t}^2 + 1\right).
\end{equation*}
As $\epsilon > 0$ was arbitrary and there are constants $c, C$ such that $c
\norm{x - x\opt}^2 \le \lyap(x - x\opt) \le C \norm{x - x\opt}^2$ this gives
Lemma~\ref{lemma:smaller-than-past}.

\subsection{Proof of Lemma~\ref{lemma:funny-gamma-integral}}
\label{sec:proof-funny-gamma-integral}

We prove the result via a change of variables. Let
$u = c (t^\kappa - a^\kappa)$, so that
\begin{equation*}
  t = \left(u/c + a^\kappa\right)^{\frac{1}{\kappa}},
  ~~~
  du = \kappa c t^{\kappa - 1} dt
  = \kappa c \left(u/c + a^\kappa\right)^{\frac{\kappa - 1}{\kappa}}
  dt,
  ~~~ \mbox{or} ~~~
  dt = (\kappa c)^{-1}
  \left(u / c + a^\kappa\right)^{\frac{1 - \kappa}{\kappa}}
  du.
\end{equation*}
That is, by our change of variables, we have
\begin{align*}
  \int_a^b \exp\left(-c (t^\kappa - a^\kappa)\right) dt
  & = \frac{1}{\kappa c}\int_0^{c(b^\kappa - a^\kappa)}
  \left(\frac{u}{c} + a^\kappa\right)^\frac{1 - \kappa}{\kappa}
  e^{-u}
  du \\
  & \le \frac{\max\{2^{\frac{1 - \kappa}{\kappa} - 1}, 1\}}{\kappa c}
  \left[
    \int_0^{c(b^\kappa - a^\kappa)}
    \left(\frac{u}{c}\right)^\frac{1 - \kappa}{\kappa} e^{-u} du
    +
    \int_0^{c(b^\kappa - a^\kappa)}
    a^{1 - \kappa} e^{-u} du\right],
\end{align*}
where the final inequality follows by convexity of $t \mapsto t^\frac{1 -
  \kappa}{\kappa}$, for $\kappa < \half$ and
the fact that $(t_1 + t_2)^\frac{1 - \kappa}{\kappa}
\le t_1^\frac{1 - \kappa}{\kappa} + t_2^\frac{1 - \kappa}{\kappa}$
for $\kappa \ge \half$ (or $\frac{1 - \kappa}{\kappa} \le 1$).
Noting that $\int_0^\infty u^\frac{1 - \kappa}{\kappa}e^{-u} du
= \Gamma(\frac{1}{\kappa})$ and
$\int_0^\infty e^{-u} du = 1$,
we obtain our desired result.

\subsection{Proof of Lemma~\ref{lemma:funny-zero-sum}}
\label{sec:proof-funny-zero-sum}

\newcommand{\splitmult}{a}

The quantity in the summation diverges or converges identically
to the integral
\begin{equation}
  \label{eqn:break-two-integrals}
  \int_1^n u^{\npow -2 \steppow}
  \exp\left(-c(n^{1 - \steppow} - u^{1 - \steppow})
  \right) du
  \le \int_1^{\splitmult n} \exp\left(-c(n^{1 - \steppow} - u^{1 - \steppow})\right)
  dt + \int_{\splitmult n}^n u^{\npow - 2 \steppow} dt
\end{equation}
for any $\splitmult \in [0, 1]$. Now, by concavity of $u \mapsto u^{1 -
  \steppow}$ for $\steppow \in (\half, 1)$, we have $u^{1 - \steppow} \le
n^{1 - \steppow} + (1 - \steppow) n^{-\steppow}(u - n)$, or $n^{1 -
  \steppow} - u^{1 - \steppow} \ge (1 - \steppow)(n - u) n^{-\steppow}$.
In particular, the first integral on the right side of the
display~\eqref{eqn:break-two-integrals} has bound
\begin{align}
  \int_1^{\splitmult n}
  \exp\left(-c(n^{1 - \steppow} - u^{1 - \steppow})\right) & du
  \le
  \int_1^{\splitmult n} \exp\left(-\frac{c}{n^\steppow}(n - u)\right) du
  = -\int_{\frac{c}{n^\steppow}(n - 1)}^{\frac{c}{n^\steppow}
    (1 - \splitmult) n}
  \frac{n^\steppow}{c} \exp(-u) du \nonumber \\
  & \le \frac{n^\steppow}{c}
  \int_{c(1 - \splitmult) n^{1 - \steppow}}^{c n^{1 - \steppow}}
  e^{-u} du =
  \frac{n^\steppow}{c}
  \left[\exp\left(-c(1 - \splitmult) n^{1 - \steppow}\right)
    - \exp\left(-c n^{1 - \steppow}\right)\right] \nonumber \\
  & \le \frac{n^\steppow}{c} \exp\left(-c(1 - \splitmult) n^{1 - \steppow}
  \right), \label{eqn:first-integral-funny-zero}
\end{align}
where we made a change of variables. For the second integral,
we have
\begin{equation*}
  \int_{\splitmult n}^n u^{\npow - 2 \steppow} du =
  \frac{1}{1 - 2 \steppow}(n^{1 + \npow - 2 \steppow}
  - (\splitmult n)^{1 + \npow - 2 \steppow}),
\end{equation*}
and combining this with~\eqref{eqn:first-integral-funny-zero} in the
bound~\eqref{eqn:break-two-integrals}, we obtain for any $\splitmult \in (0,
1)$ that for constants $C, c$,
\begin{equation*}
  \sum_{k = 1}^n k^{\npow - 2 \steppow} \exp\left(-c(n^{1 - \steppow}
  - k^{1 - \steppow})\right) 
  \le  
  C\left[ n^\steppow \exp\left(-c(1 - \splitmult) n^{1 - \steppow}\right)
    + \frac{n^{1 + \npow - 2 \steppow}}{2\steppow - 1}
    \left(1 - \splitmult^{1 + \npow - 2 \steppow}\right)\right].
\end{equation*}
By our assumption that $\npow < 1$ and $\splitmult \in (0, 1)$, the first
term above converges to zero;
our assumption that $\npow < \steppow - \half$ guarantees that the
second does as well.

\bibliographystyle{abbrvnat}
\bibliography{bib}

\end{document}